\titlespacing\section{0pt}{12pt plus 4pt minus 2pt}{0pt plus 2pt minus 2pt}
\titlespacing\subsection{0pt}{12pt plus 4pt minus 2pt}{0pt plus 2pt minus 2pt}
\titlespacing\subsubsection{0pt}{0pt plus 2pt minus 2pt}{0pt plus 2pt minus 2pt}
\titleformat{\section}[block]{\Large\bfseries\scshape\filcenter}{\thesection.}{1ex}{}
\titleformat{\subsection}{\large\scshape\filcenter}{\thesubsection}{1ex}{}
\NewDocumentCommand{\whiten}{ m }
    {
      \int_step_function:nnnN {1}{1}{#1} \white_text:n
    }
\NewDocumentCommand{ \varul }{ D<>{5} O{0.2ex} O{0.1ex} +m } {%
\begingroup
\setul{#2}{#3}%
\def\SOUL@uleverysyllable{%
   \setbox0=\hbox{\the\SOUL@syllable}%
   \ifdim\dp0>\z@
      \SOUL@ulunderline{\phantom{\the\SOUL@syllable}}%
      \whiten{#1}%
      \llap{%
        \the\SOUL@syllable
        \SOUL@setkern\SOUL@charkern
      }%
   \else
       \SOUL@ulunderline{%
         \the\SOUL@syllable
         \SOUL@setkern\SOUL@charkern
       }%
   \fi}%
    \ul{#4}%
\endgroup
}
      \string\usetikzlibrary{decorations.markings} to use arrows with markings}{}}{}%
\newtheoremstyle{thms}{1em}{0pt}{\itshape}{}{\bfseries}{.}{ }{}
\theoremstyle{thms}
\newtheorem{Thm}{Theorem}[section]				
\newaliascnt{Prop}{Thm}							
\newtheorem{Prop}[Prop]{Proposition}
\newaliascnt{Lemma}{Thm}							
\newtheorem{Lemma}[Lemma]{Lemma}
\newaliascnt{Cor}{Thm}						
\newtheorem{Cor}[Cor]{Corollary}
\newaliascnt{Conj}{Thm}							
\newaliascnt{Question}{Thm}						
\newtheorem{Question}[Question]{Question}
\newtheoremstyle{defs}{1em}{0pt}{}{}{\bfseries}{.}{ }{}
\theoremstyle{defs}
\newaliascnt{Rmk}{Thm}							
\newtheorem{Rmk}[Rmk]{Remark}
\newaliascnt{Fact}{Thm}							
\newaliascnt{Def}{Thm}							
\newtheorem{Def}[Def]{Definition}
\newaliascnt{Ex}{Thm}								
\newtheorem{Ex}[Ex]{Example}
\newaliascnt{Con}{Thm}							
\newtheorem{Con}[Con]{Construction}
\newaliascnt{Not}{Thm}							
\newaliascnt{Setup}{Thm}							
\newtheorem{Setup}[Setup]{Setup}
\newaliascnt{Picture}{Thm}						
\newtheorem{Picture}[Picture]{Picture}
\theoremstyle{thms}
\newtheorem{thm}{Theorem}
\newtheorem*{thm*}{Theorem}
\newtheorem{question}{Question}
\newtheorem*{question*}{Question}
\newtheorem*{lemma*}{Lemma}
\LetLtxMacro\oldproof\proof						
\renewcommand{\proof}[1][Proof]{\oldproof[#1]\unskip} 
\newcommand{\boldref}[1]{\autoref{#1}}
\newenvironment{itemize*} 
  {\begin{itemize}
    \setlength{\itemsep}{1em}
    \setlength{\parskip}{-1em}
    \setlength{\topsep}{0pt}
    \setlength{\partopsep}{0pt}}
  {\end{itemize}}
\newenvironment{enumerate*}
  {\begin{enumerate}
    \setlength{\itemsep}{1em}
    \setlength{\parskip}{-1em}
    \setlength{\topsep}{0pt}
    \setlength{\partopsep}{0pt}}
  {\end{enumerate}}
\setlist{itemsep=0em,topsep=0cm,partopsep=0em,parsep=\lineskip}
\setlist[enumerate]{label=\normalfont(\arabic*)}
\setlist[itemize]{leftmargin=1.3em}
\newcommand{\CH}{\operatorname{CH}}
\newcommand{\Pic}{\operatorname{Pic}}
\newcommand{\PIC}{\operatorname{\textbf{\textup{Pic}}}}
\newcommand{\Alb}{\operatorname{Alb}}
\newcommand{\ALB}{\operatorname{\textbf{\textup{Alb}}}}
\newcommand{\Jac}{\operatorname{\textbf{\textup{Jac}}}}
\newcommand{\Gal}{\operatorname{Gal}}
\newcommand{\Hom}{\operatorname{Hom}}
\newcommand{\Aut}{\operatorname{Aut}}
\newcommand{\HOM}{\operatorname{\textbf{\textup{Hom}}}}
\newcommand{\End}{\operatorname{End}}
\newcommand{\Defo}{\operatorname{Def}}
\newcommand{\Mor}{\operatorname{Mor}}
\newcommand{\MOR}{\operatorname{\textbf{\textup{Mor}}}}
\newcommand{\NS}{\operatorname{NS}}
\newcommand{\Spec}{\operatorname{Spec}}
\newcommand{\SPEC}{\operatorname{\textbf{\textup{Spec}}}}
\newcommand{\Frob}{\operatorname{Frob}}
\newcommand{\Sch}{\operatorname{\underline{Sch}}}
\newcommand{\Set}{\operatorname{\underline{Set}}}
\newcommand{\ab}{\operatorname{ab}}
\newcommand{\Char}{\operatorname{char}}
\newcommand{\cts}{_{\operatorname{cts}}}
\let\oldtop\top
\newcommand{\T}{^\oldtop}
\renewcommand{\top}{^{\operatorname{top}}}
\newcommand{\et}{\operatorname{\acute et}}
\newcommand{\id}{\operatorname{id}}
\newcommand{\rA}{\longrightarrow}
\newcommand{\Ra}{\Rightarrow}
\newcommand{\La}{\Leftarrow}
\newcommand{\LRa}{\Leftrightarrow}
\newcommand{\punct}[1]{\makebox[0pt][l]{\,#1}} 
\newcommand{\Z}{\mathbb Z}
\newcommand{\Q}{\mathbb Q}
\newcommand{\C}{\mathbb C}
\newcommand{\F}{\mathbb F}
\renewcommand{\P}{\mathbb P}
\newcommand{\A}{\mathbb A}
\begin{document}

\renewcommand{\sectionautorefname}{Section}
\renewcommand{\subsectionautorefname}{Subsection}
\setlength{\cftbeforetoctitleskip}{3em}
\renewcommand{\contentsname}{\hfill\Large\bfseries\scshape Contents\hfill}
\renewcommand{\cftaftertoctitle}{\hfill}

\renewcommand{\contentsname}{\hfill\Large\bfseries\scshape Contents\hfill}
\renewcommand{\cftaftertoctitle}{\hfill}

\begin{center}
\noindent\makebox[\linewidth]{\rule{13cm}{0.4pt}}
\vspace{-.4em}

{\LARGE{\textsc{\textbf{A variety that cannot be\\[.2em] dominated by one that lifts}}}}

\vspace{2.0em}

{\large{\textsc{Remy van Dobben de Bruyn}}}

\vspace{.5em}
\rule{7.5cm}{0.4pt}
\vspace{2.5em}
\end{center}

\renewcommand{\abstractname}{\small\bfseries\scshape Abstract}

\begin{abstract}\noindent
We prove a precise version of a theorem of Siu and Beauville on morphisms to higher genus curves, and use it to show that if a variety $X$ in characteristic $p$ lifts to characteristic $0$, then any morphism $X \to C$ to a curve of genus $g \geq 2$ can be lifted along. We use this to construct, for every prime $p$, a smooth projective surface $X$ over $\bar \F_p$ that cannot be rationally dominated by a smooth proper variety $Y$ that lifts to characteristic $0$.
\end{abstract}

\vspace{1em}

\phantomsection
\section*{Introduction}\label{Sec intro}
Given a smooth proper variety $X$ over a field $k$ of characteristic $p > 0$, a \emph{lift} of $X$ to characteristic $0$ consists of a DVR\footnote{One can also define lifts over a more general base, but this reduces to the case of a DVR at the expense of enlarging the residue field (see e.g.~\cite[Lem.~6.1.3]{Thesis}).} $R$ of characteristic $0$ with residue field $k$ and a flat proper $R$-scheme $\mathcal X$ whose special fibre $\mathcal X_0$ is isomorphic to $X$. 

Varieties that lift enjoy some of the properties of varieties in characteristic $0$. For example, for varieties of dimension $d \leq p$ that lift over the Witt ring $W(k)$, or even its characteristic $p^2$ quotient $W_2(k)$, the Hodge--de Rham spectral sequence degenerates and ample line bundles satisfy Kodaira vanishing \cite{DelIll}. If $X$ is a minimal surface of general type that lifts to characteristic $0$, then the Bogomolov--Miyaoka--Yau inequality holds \cite[Ex.~11.5]{LiedSurv}, and the same is true if $X$ lifts to $W_2(k)$ and $p > 2$ \cite[Thm.~13]{Langer}.

However, Serre showed \cite{SerLift} that not every smooth projective variety can be lifted to characteristic $0$. Serre's example is constructed as a quotient $Y \to X$ of a liftable variety by a finite group action. The following well-known open problem arises naturally from this construction:

\begin{question}\label{Q main}
Given a smooth proper variety $X$ over $\bar \F_p$, does there exist a smooth proper variety $Y$ and a surjection $Y \twoheadrightarrow X$ such that $Y$ lifts to characteristic $0$?
\end{question}

The main result of this paper is a negative answer to \boldref{Q main}:

\begin{thm}\label{thm intro}
Let $C$ be a supersingular curve over $\bar \F_p$ of genus $g \geq 2$, and $X \subseteq C^3$ a sufficiently general divisor. If $Y$ is a smooth proper variety admitting a dominant rational map $Y \dashrightarrow X$, then $Y$ cannot be lifted to characteristic $0$.
\end{thm}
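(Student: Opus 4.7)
The plan is to argue by contradiction. Suppose $Y$ is a smooth proper variety over $\bar\F_p$ admitting a dominant rational map $f\colon Y \dashrightarrow X$, and suppose $Y$ admits a smooth proper lift $\mathcal Y$ to a DVR $R$ of mixed characteristic. Composing $f$ with each projection $\pi_i\colon C^3 \to C$ yields a rational map $Y \dashrightarrow C$; since $C$ has genus $\geq 2$ and $Y$ is smooth, each extends (via the Albanese) to a morphism $g_i\colon Y \to C$. Assembling them gives $g = (g_1,g_2,g_3)\colon Y \to C^3$, whose image is exactly $X$ by dominance of $f$ and closedness of $X$.

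Next, apply the main lifting theorem announced in the abstract to each $g_i$: it lifts to a morphism $\tilde g_i\colon \mathcal Y \to \mathcal C_i$, where $\mathcal C_i$ is a smooth proper lift of $C$ over $R$. Combining them yields $\tilde g\colon \mathcal Y \to \mathcal C_1 \times_R \mathcal C_2 \times_R \mathcal C_3$. Its scheme-theoretic image $\mathcal X \subseteq \prod_i \mathcal C_i$ is $R$-flat (as it is not contained in the special fibre) and has special fibre of dimension $2$, so by upper semicontinuity of fibre dimension $\mathcal X$ is a relative effective Cartier divisor with set-theoretic special fibre $X$. Consequently, some positive multiple $m[X] \in \NS(C^3)$ is the specialization of $[\mathcal X_\eta] \in \NS(\prod_i \mathcal C_{i,\eta})$; equivalently, $[X]$ lies in the image of the specialization map $\NS(\prod_i \mathcal C_{i,\eta})\otimes\Q \to \NS(C^3)\otimes\Q$.

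The contradiction comes from choosing $X$ so that no such specialization is possible. Here the supersingularity of $C$ is crucial: $J(C)$ is isogenous to $E^g$ for a supersingular elliptic curve $E$, so $\End(J(C))\otimes\Q$ contains the quaternion algebra $B_{p,\infty}$, which via the K\"unneth decomposition forces $\NS(C^3)\otimes\Q$ to have rank at least $12 g^2$. In contrast, the image of specialization from any triple of characteristic-zero lifts has rank only $O(g)$, bounded by the Hom-ranks between the lifted Jacobians (and attained only in the CM case). Pick an ample class $[D]\in \NS(C^3)$ whose image in $\NS(C^3)\otimes\Q$ lies outside every such specialization image, and take $X \in |D|$ sufficiently general so that Bertini produces smoothness, irreducibility, and surjectivity of each projection $X \to C$. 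The hardest part will be making this last step uniform: one must verify that no triple of lifts of $C$ to any DVR realizes $[D]$ in $\NS\otimes\Q$, which boils down to a careful analysis of the image of $\End(J(C))$ in $\End$ of its characteristic-zero lifts---classical but delicate---together with ensuring that a general $X \in |D|$ admits no ``extra'' morphisms to curves of genus $\geq 2$ beyond the three coordinate projections.
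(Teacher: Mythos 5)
Your outline follows the same broad strategy as the paper (extend the rational maps to morphisms $Y \to C$, lift them along $\mathcal Y$, take the scheme-theoretic image to produce a lifted divisor class, and contradict supersingularity), but two essential points are missing, and they are exactly where the real work lies.

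First, you apply the lifting theorem to the compositions $g_i \colon Y \to C$ as if it produced lifts $\tilde g_i \colon \mathcal Y \to \mathcal C_i$ with $\mathcal C_i$ a lift of $C$ itself. The theorem only applies to morphisms with $\phi_*\mathcal O_Y = \mathcal O_C$, and it lifts them only after an extension of the DVR and up to a Frobenius twist of the target. Since $Y \dashrightarrow X$ is merely dominant (possibly of high degree), the Stein factorisations $Y \to C'_i \to C$ introduce finite covers $C'_i$ of $C$ that you cannot control in advance, and no genericity assumption on $X$ removes them: the ``extra'' covers come from $Y$, not from extra pencils on $X$. Consequently the image divisor lives in a product of lifts of (Frobenius twists of) the $C'_i$, not of $C$, and the obstruction you use must be stable under pullback along $\prod_i C'_i \to \prod_i C_i$ and under purely inseparable maps. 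This is precisely why the paper works with the invariants $E_i(\mathscr L)$ (compatible with pullback by finite covers) and why $X$ must be chosen \emph{stably irreducible}, so that its preimage in $\prod_i C'_i$ stays irreducible and the cycle-class bookkeeping $f^*[X] = a[X']$ goes through. None of this is addressed by your proposal.

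Second, your obstruction is not actually established. Choosing an ample class $[D]$ ``outside every specialization image'' cannot be justified by a rank count: $\NS(C^3)\otimes\Q$ is a finite-dimensional $\Q$-vector space, the possible specialization images form a countable family of proper subspaces, and a countable union of proper $\Q$-subspaces can cover the whole space. (Your quantitative claim is also off: CM lifts give $\Hom$-rank up to $2g^2$ per factor, not $O(g)$, though a codimension gap against $4g^2$ persists.) What is needed is a \emph{uniform} obstruction valid for all lifts of all the covers $C'_i$ simultaneously, and this is the content of the paper's Rosati-dual generation theorem: one chooses $\mathscr L$ whose $\Hom$-components generate $\End^\circ(J)$ as a $\Q$-rng via compositions of loops, so that any lift of a multiple of (a pullback of) $\mathscr L$ would force all endomorphisms of a supersingular abelian variety to lift, which is impossible by a dimension count. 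You explicitly defer this step as ``classical but delicate''; it is the heart of the construction, together with the stable-irreducibility Bertini argument, and without it the proof is incomplete.
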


Here, a curve $C$ is \emph{supersingular} if its Jacobian is a supersingular abelian variety, and by \emph{a sufficiently general divisor} we mean that there is a Zariski open $U \subseteq \NS(C^3) \otimes \Q$ such that for every very ample line bundle $\mathscr L$ whose N\'eron--Severi class lands in $U$, a general member $X \in |\mathscr L^{\otimes n}|$ for $n \gg 0$ satisfies the conclusion of the theorem. See \boldref{Thm main} and \boldref{Rmk Zariski open}.

There is a long history of improving the properties of a variety after a cover; classical examples are Chow's lemma and resolution of singularities. \boldref{Q main} asks whether one can ``resolve characteristic $p$ pathologies'' in the same way. 

According to Serre \cite[31 mars 1964, notes]{GrothendieckSerre}, Grothendieck had asked the more ambitious question whether every smooth projective variety is dominated by a product of curves. This was answered negatively by Serre \cite[31 mars 1964]{GrothendieckSerre}, and later independently by Schoen \cite{Schoen}. Deligne recently showed \cite{DeligneLetter} that \boldref{Q main} has a negative answer if one further assumes that $K(X) \to K(Y)$ is purely inseparable. In both cases, the examples constructed are surfaces, which is the smallest possible example as curves are unobstructed.

The class of liftable varieties plays a particularly important role in motivic and cohomological questions over finite fields. Lifting to characteristic $0$ gives access to powerful techniques that are unavailable in positive characteristic. For example, the recent proofs of the Tate conjecture for K3 surfaces over finitely generated fields \cite{ChaTate}, \cite{MP}, \cite[Appendix A]{KMP} proceed by lifting the K3 surface to characteristic 0 \cite{DelLift} and using the (transcendental) Kuga--Satake construction.

A positive answer to \boldref{Q main} would give a strategy for deducing cohomological statements in positive characteristic from the characteristic $0$ versions. Indeed, if $f \colon Y \to X$ is a surjective morphism of smooth proper varieties, then the pullback \mbox{$f^* \colon H^*(X) \to H^*(Y)$} for any Weil cohomology theory $H$ is injective \cite[Prop.~1.2.4]{KleDix}. If we can find such $Y$ that lifts to characteristic $0$, then one can try to deduce properties of $H^*(X)$ from the characteristic $0$ analogue. 

For example, if $X$ is a smooth projective variety over $\bar\F_p$ and $\alpha \in \CH^i(X)_\Q$ is an algebraic cycle, then it is expected\footnote{See also \cite{Independence} for the equivalence of this conjecture to other classical conjectures on independence of $\ell$ of \'etale cohomology of varieties over finite fields.} that the vanishing or nonvanishing of $\operatorname{cl}(\alpha) \in H^{2i}_{\et}(X,\Q_\ell)$ does not depend on the prime $\ell$. A strategy for this problem would be to dominate the \emph{pair} $(X,\alpha)$ by a pair $(Y,\beta)$ that can be lifted. The present paper shows that this is not even possible in absence of the cycle $\alpha$.

However, the Tate conjecture predicts that every \emph{motive} over $\bar\F_p$ embeds into one coming from a liftable variety. Indeed, under Tate, results from Honda \cite{Honda} and Tate \cite{Tate} imply that the category of Chow motives over $\bar\F_p$ is generated by (liftable) abelian varieties; see e.g.\ \cite[Rmk.\ 2.7]{MilFiniteFields}.

In this light, \boldref{Q main} can be seen as a direct approach to this (rather weak) consequence of the Tate conjecture. This also indicates that a purely motivic (cohomological) obstruction to \boldref{Q main} is unlikely.

It was already known that the surfaces constructed in \boldref{thm intro} cannot be dominated by a product of curves \cite[Prop.\ 7.2.1]{Schoen}, but as far as we know even non-liftability seems to be new.

For smooth proper surfaces liftability is a birational invariant, but Liedtke and Satriano showed that this fails for smooth projective threefolds, as well as for singular surfaces \cite{LiedSat}. Achinger and Zdanowicz constructed beautiful elementary examples of smooth projective \emph{rational} varieties that cannot be lifted to any ring in which $p \neq 0$ \cite{AchZda}. Their examples are in many ways as nice as possible, e.g.\ their cohomology is generated by algebraic cycles.

\phantomsection
\subsection*{Outline of the proof}\label{Sec outline}
Like in Serre's example, we have no direct obstruction to liftability of $X$ or $Y$. Rather, we prove that additional geometric structure can be lifted along, and then set up our example to obtain a contradiction. 

In Serre's argument, the additional structure that lifts is a finite \'etale Galois cover $X' \to X$. This structure lives \emph{above} $X$, so we have no way to use it on $Y$. Instead, we lift structure \emph{below} $X$:

\begin{thm}\label{thm lift morphism}
Let $X$ be a variety in characteristic $p$, and let $\mathcal X \to \Spec R$ be a lift over a DVR $R$. Let $\phi \colon X \to C$ be a morphism to a smooth projective curve of genus $g \geq 2$ such that $\phi_* \mathcal O_X = \mathcal O_C$. Then $\phi$ can be lifted to a morphism $\widetilde\phi \colon \mathcal X \to \mathcal C$, up to an extension of $R$ and a Frobenius twist of $C$.
\end{thm}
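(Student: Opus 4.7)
The plan is to translate $\phi$ into a rigid piece of data on the Picard variety of $X$, lift it to an abelian scheme over $R$ (at the cost of a Frobenius twist), and recover a morphism to a curve by dualising and composing with the relative Albanese of $\mathcal X$. The precise version of Siu--Beauville proved earlier in the paper supplies both the rigidity of the data and the identification of the reconstructed morphism as landing in a curve rather than merely in its Jacobian.

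Concretely, since $\phi_* \mathcal O_X = \mathcal O_C$ the pullback $\phi^* \colon \Jac(C) \to \Pic^0_X$ has finite kernel, so its image $A$ is a sub-abelian variety of (the reduced identity component of) $\Pic^0_X$ equipped with the polarisation pulled back from the principal polarisation of $\Jac(C)$. The precise Siu--Beauville theorem characterises such pairs $(A, \text{pol.})$ intrinsically on $\Pic^0_X$ and shows that they are rigid in smooth proper families. After extending $R$, one obtains an abelian scheme $\mathcal P \to \Spec R$ whose generic fibre is (the reduced identity component of) $\Pic^0_{\mathcal X_\eta}$ and whose special fibre agrees with $\Pic^0_X$ up to a Frobenius twist of some of its factors --- the source of the Frobenius twist of $C$ in the conclusion. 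Deformation theory together with the rigidity lifts $A$ to a sub-abelian scheme $\mathcal A \subseteq \mathcal P$, and Torelli in families produces a smooth projective relative curve $\mathcal C \to \Spec R$ with $\Jac(\mathcal C / R) \cong \mathcal A$ and special fibre a Frobenius twist of $C$.

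Dualising $\mathcal A \hookrightarrow \mathcal P$ gives a surjection $\mathcal P^\vee \twoheadrightarrow \mathcal A^\vee \cong \Jac(\mathcal C / R)$; composing with a relative Albanese $\mathcal X \to \mathcal P^\vee$ (after choosing a base point) produces a morphism $\mathcal X \to \Jac(\mathcal C / R)$. The precise Siu--Beauville theorem, applied to the generic fibre of $\mathcal X$, ensures that this morphism factors through the Abel--Jacobi embedding $\mathcal C \hookrightarrow \Jac(\mathcal C / R)$, producing the desired $\widetilde \phi \colon \mathcal X \to \mathcal C$, which by construction specialises to $\phi$ up to the Frobenius twist. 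The main obstacle is the middle step: the Picard scheme in characteristic $p$ can be non-reduced, and matching the special fibre of the abelian-scheme lift to $\Pic^0_X$ inevitably does so only up to Frobenius twists, so pinning the resulting twist of $\Jac(C)$ down to the Jacobian of a single Frobenius twist $C^{(p^n)}$ is exactly where the rigid content of the precise Siu--Beauville theorem is decisive.
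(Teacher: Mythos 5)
There is a genuine gap, and it sits exactly at the step you yourself flag as ``the main obstacle''. Your plan asks to lift the sub-abelian variety $A = \phi^*\Jac(C) \subseteq \Pic^0_X$ (with its induced polarisation) to a sub-abelian scheme $\mathcal A \subseteq \mathcal P$ over $R$, invoking ``rigidity'' supplied by the precise Siu--Beauville theorem. But that theorem (\boldref{Thm bijection}) is a statement over an algebraically closed field of characteristic $0$ about open homomorphisms $\pi_1^{\et,\ell}(X) \to \Gamma_g^\ell$ and morphisms to curves; it says nothing about sub-abelian varieties of $\Pic^0$ in characteristic $p$, and it provides no deformation-theoretic rigidity in mixed characteristic. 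In fact, lifting abelian subvarieties or, equivalently, idempotents/endomorphisms of $\Pic^0_X$ along a lift of $X$ is precisely the kind of statement that fails in general --- the whole obstruction exploited later in the paper (\boldref{Prop no multiple lifts}) is that endomorphisms of supersingular Jacobians do \emph{not} lift. Nothing in your plan supplies the mechanism that replaces this: the paper avoids it entirely by transporting the surjection $\pi_1^{\et,\ell}(\mathcal X_0) \twoheadrightarrow \Gamma_g^\ell$ to the geometric generic fibre via the specialisation isomorphism, producing the curve fibration $\phi' \colon \mathcal X_{\bar K} \to \bar C'$ purely in characteristic $0$ by \boldref{Prop Beauville for H^1}/\boldref{Cor S wedge}, and only then coming back down (Oda's pro-$\ell$ N\'eron--Ogg--Shafarevich for good reduction of $C'$, the N\'eron mapping property to extend $\phi'$ over $R'$, and Stein factorisation of the special fibre to identify it with $\phi$ up to Frobenius). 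Your argument runs in the opposite direction --- lift the special-fibre structure --- which is the direction that is obstructed.

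Several auxiliary steps are also unsupported. The intermediate object $\mathcal P$ does not exist as described: $\PIC^0_{\mathcal X/R}$ need not be an abelian scheme (non-reducedness, possible jumping of $h^1(\mathcal O)$), and the assertion that its special fibre ``agrees with $\Pic^0_X$ up to a Frobenius twist of some of its factors'' has no justification; in the paper the Frobenius twist arises concretely, as the radicial finite part of the Stein factorisation of $\phi'_0$. The appeal to ``Torelli in families'' is also not available: an abelian scheme over $R$ whose \emph{special} fibre is a Jacobian need not have Jacobian generic fibre (the Torelli locus is not stable under generisation), so you cannot reconstruct a relative curve $\mathcal C/R$ this way; the paper instead gets the curve on the generic fibre from Siu--Beauville and its integral model from Oda's good-reduction criterion. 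Finally, the claim that the composite $\mathcal X \to \Jac(\mathcal C/R)$ factors through the Abel--Jacobi embedding is again not something \boldref{Thm bijection} gives you over $R$ or in characteristic $p$. To repair the proposal you would essentially have to re-route it through the fundamental group on the generic fibre, which is the paper's proof.
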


A precise version is given in \boldref{Thm lift morphism to curve}. The proof relies on a classification of morphisms $X \to C$ to higher genus curves that depends only on the fundamental group of $X$. For this, we need the following precise version of Siu--Beauville's theorem \cite[Thm.~4.7]{Siu}, \cite[Appendix]{Cat}.

\begin{thm}\label{thm SB}
Let $X$ be a smooth proper variety over an algebraically closed field $k$ of characteristic $0$, let $\ell$ be a prime, and let $g_0 \geq 2$. Then the association $\phi \mapsto \phi_*$ induces a bijection
\begin{align*}
\left\{\phi \colon X \twoheadrightarrow C\ \bigg|\ g(C) \geq g_0\right\}\raisebox{-1.0em}{$\!\!\!\diagup$}\raisebox{-1.3em}{$\!\!\sim$} &\rA \left\{\rho \colon \pi_1^{\et,\ell}(X) \to \Gamma_g^\ell \text{ open}\ \bigg|\ g \geq g_0\right\}\raisebox{-1.0em}{$\!\!\!\diagup$}\raisebox{-1.3em}{$\!\!\sim$}
\end{align*}
on equivalence classes for naturally defined equivalence relations. 
\end{thm}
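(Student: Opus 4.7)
The plan is to reduce to the classical Siu--Beauville theorem over $\C$. By spreading out $X$ over a finitely generated subring of $k$ and specializing, together with the base-change invariance of the pro-$\ell$ \'etale fundamental group for algebraically closed fields of characteristic $0$, we may assume $k = \C$. Under this reduction, the Riemann existence theorem identifies $\pi_1^{\et,\ell}(X)$ with the pro-$\ell$ completion of $\pi_1\top(X)$, and $\pi_1^{\et,\ell}(C_g) = \Gamma_g^\ell$ for a compact Riemann surface $C_g$ of genus $g$. The classical Siu--Beauville theorem \cite[Thm.~4.7]{Siu}, \cite[Appendix]{Cat} then gives a bijection between equivalence classes of surjective morphisms $\phi \colon X \twoheadrightarrow C$ with $g(C) \geq g_0$ and equivalence classes of surjective continuous homomorphisms $\pi_1\top(X) \twoheadrightarrow \Gamma_g$ with $g \geq g_0$. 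It thus suffices to show that the pro-$\ell$ completion functor induces a bijection on equivalence classes between these discrete surjections and the open continuous homomorphisms $\pi_1^{\et,\ell}(X) \to \Gamma_g^\ell$.

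The crucial algebraic input is the residual pro-$\ell$-ness of $\Gamma_g$ for $g \geq 2$ and every prime $\ell$: this follows from the uniformization embedding $\Gamma_g \hookrightarrow \PSL_2(\R)$ together with Mal'cev's theorem on finitely generated linear groups in characteristic $0$. Consequently $\Gamma_g \hookrightarrow \Gamma_g^\ell$, which immediately yields injectivity of the correspondence on equivalence classes.

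For surjectivity, given an open $\rho \colon \pi_1^{\et,\ell}(X) \to \Gamma_g^\ell$, the image $H := \rho(\pi_1^{\et,\ell}(X))$ is an open subgroup of $\Gamma_g^\ell$; by residual pro-$\ell$-ness combined with the classical fact that finite index subgroups of $\Gamma_g$ are themselves surface groups $\Gamma_{g'}$ with $g' \geq g$ (Riemann--Hurwitz), one has $H \cong \Gamma_{g'}^\ell$. Replacing $g$ by $g'$, we may assume $\rho$ surjective. The main obstacle is then the final step: lifting the continuous surjection $\pi_1^{\et,\ell}(X) \twoheadrightarrow \Gamma_g^\ell$ to a discrete surjection $\pi_1\top(X) \twoheadrightarrow \Gamma_g$ inducing $\rho$ after pro-$\ell$ completion (up to the allowed equivalence). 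The strategy is to analyze the compatible tower of finite $\ell$-quotients of $\Gamma_g$: each such quotient $Q$ factoring $\rho$ determines a Galois \'etale $Q$-cover $X_Q \to X$, and applying the classical Siu--Beauville theorem to each $X_Q$ yields a system of morphisms $X_Q \to C_Q$ to higher-genus curves compatible with the tower; a descent and limit argument should then recover the desired discrete representation.
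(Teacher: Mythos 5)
There is a genuine gap, and it sits exactly where the real content of the theorem lies: the step you yourself flag as ``the main obstacle'', namely producing from an open homomorphism $\rho \colon \pi_1^{\et,\ell}(X) \to \Gamma_g^\ell$ an actual fibration (equivalently, a discrete surjection $\pi_1\top(X) \twoheadrightarrow \Gamma_{g'}$) inducing $\rho$ up to the stated equivalence. Your proposed tower argument is circular: to apply the classical Siu--Beauville theorem (\boldref{Thm SB}) to a cover $X_Q$ you would need a surjection from the \emph{discrete} group $\pi_1\top(X_Q)$ onto some $\Gamma_{g_0}$, but all the hypothesis gives you on $X_Q$ is pro-$\ell$ information (a surjection of $\pi_1^{\et,\ell}(X_Q)$ onto an open subgroup of $\Gamma_g^\ell$), which is precisely the kind of data you are trying to upgrade. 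Even granting morphisms $X_Q \to C_Q$, the ``descent and limit argument'' is not supplied, and you would still have to show that the resulting fibration of $X$ is equivalent to $\rho$ in the sense of the theorem, i.e.\ that $\rho^*H^1(\Gamma_g^\ell,\Z_\ell)$ is contained in the pullback of $H^1$ of the curve. A further overstatement occurs earlier: the classical Siu--Beauville theorem is an existence statement (one side is nonempty iff the other is), not a bijection on equivalence classes, so it cannot simply be quoted for the topological version of the correspondence; and residual pro-$\ell$-ness of $\Gamma_g$ alone does not ``immediately yield'' injectivity, since the equivalence relation on the group side is phrased via abelianisations/$H^1$-images, so injectivity needs a geometric input such as \boldref{Lem dominant to product} (injectivity of $\phi^*$ on $H^1_{\et}$ for the dominant product map to $C_1 \times C_2$).

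For comparison, the paper does not reduce to $\C$ and does not try to lift $\rho$ through a tower. Instead it proves an $\ell$-adic Green--Lazarsfeld/Beauville statement (\boldref{Prop Beauville for H^1}): given $\eta \in \rho^*H^1(\Gamma_g^\ell,\Z_\ell)$, the torsion line bundles $\mathscr L_n(\eta)$ give cyclic $\Z/\ell^n$-covers $\pi_n \colon X_n \to X$; since $\pi_1^{\et,\ell}(X_n)$ surjects onto an open subgroup of $\Gamma_g^\ell$, which is again a pro-$\ell$ surface group by \boldref{Cor open subgroup Gamma} (a nontrivial exactness statement for pro-$\ell$ completions, via Anderson), Hodge theory forces $h^1(X_n,\mathcal O_{X_n})$ to grow linearly in $\ell^n$, and Beauville's generic vanishing theorem then produces a morphism $\phi \colon X \to C$ with $g(C) \geq 2$ and $\eta \in \phi^*H^1_{\et}(C,\Z_\ell)$. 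The wedge decomposition \boldref{Cor S wedge} then packages all of $S$ into linearly disjoint pieces indexed by Stein fibrations, which is what makes both injectivity and surjectivity on equivalence classes fall out. If you want to salvage your approach, you would need to replace the appeal to classical Siu--Beauville on the covers $X_Q$ by an argument of this generic-vanishing or isotropic-subspace type; the purely group-theoretic limit strategy does not close the gap.
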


Here, $\Gamma_g^\ell$ denotes the pro-$\ell$ fundamental group of a genus $g$ smooth projective curve. On the left hand side, two pairs $(C_1,\phi_1), (C_2,\phi_2)$ are equivalent if they both factor through a third pair $(C,\phi)$ (\boldref{Def equiv Mor}). On the right hand side, two open maps $\rho_1, \rho_2$ are equivalent if their \emph{abelianisations} both factor through the abelianisation of a third map $\rho$ (\boldref{Def equiv Hom}). 
The classical statement of Siu--Beauville is recalled in \boldref{Thm SB}, and our version is \boldref{Thm bijection}. The proof is a refinement of Beauville's argument \cite[Appendix]{Cat}.

To deduce \boldref{thm lift morphism} from \boldref{thm SB}, we use the specialisation isomorphism
\[
\operatorname{sp} \colon \pi_1^{\et,\ell}(X_{\bar K}) \stackrel\sim\to \pi_1^{\et,\ell}(\mathcal X_{\bar k}),
\]
where $k$ is the residue field and $K$ the fraction field of $R$. The map $\phi \colon X \to C$ gives rise to a map $\phi_* \colon \pi_1^{\et,\ell}(X_{\bar K}) \cong \pi_1^{\et,\ell}(X_{\bar k}) \twoheadrightarrow \Gamma_g^\ell$, which in characteristic $0$ comes from some morphism $\phi' \colon \mathcal X_{\bar K} \to C'$ to a higher genus curve. The proof is carried out by relating $(\phi',C')$ to the pair $(\phi,C)$ we started with.

With \boldref{thm lift morphism} in place, we want to study varieties admitting many morphisms to higher genus curves. We will work on a product $\prod_{i=1}^r C_i$ of curves of genera $g_i \geq 2$, and we define for each $i \in \{1,\ldots,r\}$ an obstruction $E_i(\mathscr L)$ for a line bundle on $\prod_i C_i$ to lift to $\prod_i \mathcal C_i$ for lifts $\mathcal C_i$ of the $C_i$. The definition and main properties of $E_i(\mathscr L)$ are given in \boldref{Sec line bundles}. The isomorphism
\[
\Pic\left(\prod_{i=1}^r C_i\right) \cong \prod_{i = 1}^r \Pic(C_i) \times \prod_{i < j} \Hom_k(J_i, J_j)
\]
suggests that we should look at supersingular curves $C_i$, because the supersingular abelian varieties $J_i = \Jac_{C_i}$ have more automorphisms than is possible in characteristic $0$. In \boldref{Sec Rosati} we construct a line bundle $\mathscr L$ on a power $C^3$ of a supersingular curve $C$ of genus $g \geq 2$ such that no multiple $\mathscr L^{\otimes n}$ for $n > 0$ can be lifted to $\prod_i \mathcal C_i$ for \emph{any} choice of lifts $\mathcal C_i$ of $C_i$; see \boldref{Lem generate End}.

The proof of \boldref{thm intro} then roughly goes as follows. Choose a line bundle $\mathscr L$ on $C^3 = \prod_i C_i$ as above, and let $X \in |\mathscr L|$ be a general member. If $Y \twoheadrightarrow X$ is a surjective morphism (for simplicity), then consider the projections $\phi_i \colon Y \to C_i$. 
By \boldref{thm lift morphism}, if $\mathcal Y$ is a lift of $Y$, then the $\phi_i$ can be lifted to maps $\widetilde\phi_i \colon \mathcal Y \to \mathcal C_i$ (for simplicity we ignore Stein factorisation and Frobenius twists). Then the image of the product map
\[
\widetilde\phi \colon \mathcal Y \to \prod_{i=1}^3 \mathcal C_i
\]
is a divisor whose special fibre is a multiple of the reduced divisor $X$. But then a power of $\mathcal O_{\prod C_i}(X) = \mathscr L$ lifts to $\prod_i \mathcal C_i$, contradicting the choice of $\mathscr L$.

There are some additional technical difficulties one runs into, coming from the fact that the morphisms $Y \to C_i$ do not lift on the nose. Rather, one has to take their Stein factorisation $Y \to C'_i \to C_i$ first, and then the morphisms $Y \to C'_i$ only lift up to a power of Frobenius $F \colon C'_i \to C''_i$ (see \boldref{thm lift morphism}). 

One therefore has to devise an argument that is flexible with respect to finite covers $C'_i \to C_i$. We facilitate this as follows:
\begin{itemize}
\item We show that the obstruction $E_i(\mathscr L)$ to the liftability of $\mathscr L$ to $\prod_i \mathcal C_i$ is well-behaved with respect to pullback under finite morphisms (\boldref{Lem pullback and composition}). This is the reason we use this intermediate obstruction, rather than working directly with nonliftable line bundles.
\item At the end of the argument, we take the scheme-theoretic image. This is only well-behaved with respect to pushforward, not pullback. Pullback and pushforward can be interchanged as long as the inverse image of $X$ under $\prod_i C'_i \to \prod_i C_i$ is still irreducible (\boldref{Lem irreducible inverse image}).
\item But we have to define $X$ \emph{before} we know what the finite covers $C'_i \to C_i$ are. We call a divisor $X \subseteq \prod_i C_i$ \emph{stably irreducible} if its inverse image in $\prod_i C'_i$ is irreducible, regardless of the covers $C'_i \to C_i$. A Bertini theorem proves that a general member of $|\mathscr L^{\otimes n}|$ for $n \gg 0$ satisfies this property (\boldref{Prop stably irreducible}).
\end{itemize}

\subsection*{Structure of the paper}
The paper is divided into three (roughly) equal parts, each spanning two sections:
\begin{itemize}
\item In \boldref{Sec SB} we prove \boldref{thm SB}, which we then use in \boldref{Sec lift morphism to curve} to prove \boldref{thm lift morphism}. This is the \emph{geometric} part of the argument.
\item In \boldref{Sec line bundles}, we study line bundles $\mathscr L$ on a product $\prod_i C_i$ and define an obstruction $E_i(\mathscr L)$ for $\mathscr L$ to lift. We use this in \boldref{Sec Rosati} to construct a line bundle on the third power $C^3$ of a supersingular curve that cannot be lifted. This is the \emph{cohomological} part of the argument.
\item In \boldref{Sec stably irreducible}, we construct stably irreducible divisors in $|\mathscr L^{\otimes n}|$ for $n \gg 0$. This gives the variety $X$ of \boldref{thm intro}. In \boldref{Sec main construction}, we carry out the construction and proof.
\end{itemize}
This paper presents the main result of the author's dissertation \cite{Thesis}. The statement and proof of \boldref{thm SB} are new; in \cite{Thesis} we use a more technical argument relying on results from nonabelian Hodge theory \cite[Thm.~10]{Simp}, \cite{CS} to deduce \boldref{thm lift morphism}. The dissertation further contains proofs of well-known results for which no detailed account in the literature was known to the author; we occasionally refer the reader there for extended discussion.

\numberwithin{equation}{section}
\phantomsection
\subsection*{Notation}\label{Sec notation}
A \emph{variety} over a field $k$ will mean a separated scheme of finite type over $k$ that is geometrically integral. When we say \emph{curve, surface, threefold}, etc., this is always understood to be a variety. 

A smooth proper variety $X$ over a perfect field $k$ of characteristic $p > 0$ is \emph{supersingular} if for all $i$, all Frobenius slopes on $H^i_{\operatorname{crys}}(X/W(k))[\tfrac{1}{p}]$ equal $\tfrac{i}{2}$. 
If $X$ is an abelian variety, this reduces to $i = 1$, hence it recovers the usual notion. If $X$ is a curve, then it is supersingular if and only if its Jacobian is.

For two $S$-schemes $X$ and $Y$, we will write $\Mor_S(X,Y)$ for the set of morphisms of $S$-schemes $X \to Y$, and $\MOR_S(X,Y)$ for the functor $\Sch_S \to \Set$ mapping $T \to S$ to $\Mor_T(X_T,Y_T)$ (or the scheme representing this functor, if it exists). Similarly, if $A$ and $B$ are abelian schemes over $S$, then $\Hom_S(A,B)$ will denote the group of homomorphisms $A \to B$ of abelian schemes, and $\HOM_S(A,B)$ will denote the group scheme of homomorphisms (see e.g.~\cite[Cor.~4.2.4]{Thesis} for representability). The group $\Hom_S(A,B) \otimes \Q$ will be denoted $\Hom^\circ_S(A,B)$.

If $\mathcal P$ is a property of schemes and $X \to S$ is a morphism of schemes, then $\mathcal P$ \emph{holds for a general fibre $X_s$} if there exists a dense open $U \subseteq S$ such that $\mathcal P(X_s)$ holds for all $s \in U$. We will sometimes omit mention of $\mathcal P$ and say that $X_s$ with $s \in U$ is a \emph{general member} of the family. If $S$ is a variety over a finite field $k$, then there need not exist a general member that is defined over $k$.

We will write $\pi_1^{\et}(X)$ for the \'etale fundamental group of a scheme $X$, $\pi_1^{\et,\ell}(X)$ for its maximal pro-$\ell$ quotient, and $\pi_1\top(X)$ for the topological fundamental group of a $\mathbb C$-variety $X$. All maps between profinite groups are assumed continuous. We will write $\Gamma_g$, $\widehat{\Gamma}_g$, and $\Gamma_g^\ell$ for the topological, \'etale, and pro-$\ell$ fundamental groups of a smooth projective genus $g$ curve over $\mathbb C$ respectively. We consistently ignore the choice of base point, because it does not affect the arguments.

A \emph{rng} is a ring without unit, and a \emph{$\Q$-rng} is a rng which is also a $\Q$-vector space. 

\subsection*{Acknowledgements}
First and foremost, I want to thank Johan de Jong for his support. This paper would not have been here without his endless generosity, as well as his encouragement to keep pushing, even and especially when the mathematics is not working out. I also thank Kiran Kedlaya, Frans Oort, Bhargav Bhatt, and Daniel Litt for encouraging me to think about the surface case. In particular, I am grateful for Bhatt's suggestion to traverse the same loop backwards in the argument of \boldref{Lem generate End}; this provided the improvement needed to go from a threefold to a surface (see also \boldref{Rmk Albert}). 

I thank Bhargav Bhatt, Raymond Cheng, Johan Commelin, David Hansen, Kiran Kedlaya, Raju Krishnamoorthy, Shizhang Li, Daniel Litt, Milan Lopuha\"a-Zwakenberg, Alena Pirutka, Jason Starr, and Burt Totaro for helpful discussions. I am grateful to Adrian Langer and Christian Liedtke for comments on an earlier version, and in particular to Raymond Cheng for extensive suggestions and corrections. Finally, I thank the defense committee for helpful suggestions.

\section{A precise version of Siu--Beauville}\label{Sec SB}
The following result was obtained independently by Siu \cite[Thm.~4.7]{Siu} and Beauville \cite[Appendix]{Cat}.

\begin{Thm}[Siu--Beauville]\label{Thm SB}
Let $X$ be a compact K\"ahler manifold, 
and let $g_0 \geq 2$. Then $X$ admits a surjection $\phi \colon X \to C$ to a compact Riemann surface $C$ of genus $g(C) \geq g_0$ if and only if there exists a surjection $\rho \colon \pi_1\top(X) \twoheadrightarrow \Gamma_{g_0}$.
\end{Thm}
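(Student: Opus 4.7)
My plan is to prove the two implications separately. The direction $(\Rightarrow)$ should be formal: given a surjection $\phi \colon X \twoheadrightarrow C$ with $g(C) \geq g_0$, I would first replace $\phi$ by its Stein factorisation $X \to C' \to C$ so that $\phi_* \mathcal O_X = \mathcal O_{C'}$; since $g(C) \geq 2$, Riemann--Hurwitz ensures $g(C') \geq g(C) \geq g_0$. For a proper surjection with connected fibres onto a curve, the induced map $\phi_* \colon \pi_1\top(X) \twoheadrightarrow \Gamma_{g(C')}$ is surjective, and composing with a topologically constructed surjection $\Gamma_{g(C')} \twoheadrightarrow \Gamma_{g_0}$ (for instance the one obtained by collapsing $g(C') - g_0$ handles of the underlying Riemann surface) yields the required $\rho$.

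For the harder direction $(\Leftarrow)$, the plan is to reduce to Catanese's higher-genus generalisation of the Castelnuovo--de Franchis theorem: if $W \subseteq H^0(X, \Omega_X^1)$ is a subspace of dimension $k \geq 2$ with $\omega \wedge \omega' = 0$ for all $\omega, \omega' \in W$, then $X$ admits a surjective morphism onto a smooth projective curve of genus $\geq k$ through which $W$ factors. The task then becomes to produce from $\rho$ a subspace $W$ of dimension $g_0$ with vanishing wedge products.

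To this end, I would use a classifying map $f \colon X \to K(\Gamma_{g_0}, 1) \simeq \Sigma_{g_0}$ realising $\rho$ on $\pi_1$, continuous and unique up to homotopy. Surjectivity of $\rho$ makes $f^* \colon H^1(\Sigma_{g_0}, \Q) \hookrightarrow H^1(X, \Q)$ injective, so its image $V$ is a $2g_0$-dimensional $\Q$-rational subspace equipped with the nondegenerate antisymmetric form $\langle \cdot, \cdot \rangle$ pulled back from Poincar\'e duality on $\Sigma_{g_0}$. Setting $W := V_{\C} \cap H^{1,0}(X)$, I want to establish (i) $\dim_{\C} W = g_0$, and (ii) $W \wedge W = 0$ in $H^0(X, \Omega_X^2)$. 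Step (ii) should follow formally once (i) is in hand: cup products of classes in $V$ factor through the one-dimensional $H^2(\Sigma_{g_0}, \C) = \C \cdot f^*[\Sigma_{g_0}]$, and the first Hodge--Riemann bilinear relation forces $\langle W, W \rangle = 0$.

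The main obstacle will be step (i), which is equivalent to the assertion that $V$ is a sub-Hodge structure of $H^1(X, \Q)$ of pure weight one. Since $f$ is merely continuous, this is not automatic, and I would follow Beauville's method by analysing the Hodge decomposition of $\Omega := f^*[\Sigma_{g_0}] \in H^2(X, \R)$. In the generic case where the $(1,1)$-component $\Omega^{1,1}$ is nonzero, it induces a nondegenerate pairing that, combined with the polarisation on $V$ and the identity $V_{\C} \cap H^{0,1}(X) = \overline{W}$, forces $V_{\C} = W \oplus \overline{W}$ by a dimension count; the degenerate case $\Omega^{1,1} = 0$ is handled by a separate argument involving a quotient of $\Alb(X)$ onto a complex torus with the expected Hodge structure. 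Once $W$ is constructed with the two required properties, applying Catanese's theorem produces the sought-after surjection $X \twoheadrightarrow C$ onto a curve of genus at least $g_0$.
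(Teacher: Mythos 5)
The forward implication is fine. The fatal problem is your step (i): the claim that $W = V_{\C} \cap H^{1,0}(X)$ has dimension $g_0$, i.e.\ that $V = f^*H^1(\Sigma_{g_0},\Q)$ is a weight-one sub-Hodge structure of $H^1(X,\Q)$, is false in general, and false precisely in the case you call generic. Since $f$ is only continuous, nothing relates $V$ to the complex structure. Concretely, let $X$ be a smooth projective curve of genus $3$ with simple Jacobian and let $f \colon X \to \Sigma_2$ be the degree-one pinch map collapsing one handle, $\rho = f_*$ (surjective, as $f$ has degree one). Then $\Omega = f^*[\Sigma_2]$ is the fundamental class of $X$, so $\Omega^{1,1} \neq 0$, yet $V$ is a $4$-dimensional rational subspace of $H^1(X,\Q)$ that is \emph{not} a sub-Hodge structure: a $4$-dimensional weight-one rational sub-Hodge structure would produce a $2$-dimensional abelian subvariety of $\operatorname{Jac}(X)$. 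Hence $\dim_{\C} W \leq 1$, Castelnuovo--de Franchis cannot even be invoked, and no ``dimension count'' using the polarisation and Hodge--Riemann relations can force $V_{\C} = W \oplus \overline{W}$. (The theorem's conclusion of course still holds for this $X$, via the identity map; the failure is in the method, and it shows your ``generic'' case $\Omega^{1,1}\neq 0$ is actually the hard case, not the easy one.)

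What survives of this strategy is only the case $f^* = 0$ on $H^2(\Sigma_{g_0})$: there all of $V_{\C}$ is isotropic for cup product, and the right space to feed into Catanese's theorem is the space of $(1,0)$-\emph{parts} of elements of $V_{\C}$ (the image of the projection to $H^{1,0}$, not the intersection $W$, which can be small); that image consists of holomorphic forms with vanishing wedges and has dimension $\geq g_0$. The complementary case $f^* \neq 0$ on $H^2$ requires a genuinely different input: Beauville treats it using flat line bundles $L_\chi$ attached to characters $\chi$ of $\Gamma_{g_0}$, the equality $\dim H^1(\Gamma_{g_0},\chi) = 2g_0-2$, and his theorem on nonvanishing of $H^1(X,L)$ for flat $L$. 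The present paper does not reprove \boldref{Thm SB} by classifying maps at all: it cites it, and proves the refinement \boldref{Thm bijection} through \boldref{Prop Beauville for H^1}, i.e.\ through that same vanishing theorem for torsion line bundles together with cyclic covers and the bound $h^1(X_n,\mathcal O_{X_n}) \geq \ell^n(g-1)+1$. Some argument of this kind is needed to close your missing case; Hodge--Riemann bilinear relations on $V$ alone cannot do it.
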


We will upgrade this in \boldref{Thm bijection} to a bijection between suitable sets of surjections $X \to C$ and maps $\pi_1(X) \to \Gamma_g$ with finite index image. For our mixed characteristic application, it is convenient to work with the pro-$\ell$ fundamental group, but all arguments can also be carried out with the topological fundamental group (if $k = \C$) or the profinite fundamental group.

\begin{Def}\label{Def equiv Mor}
Write $\Mor(X,\geq\!\!g_0)$ for the set of pairs $(C,\phi)$ where $C$ is a smooth projective curve of genus $\geq g_0$ and $\phi \colon X \to C$ is a nonconstant morphism, up to isomorphism (as schemes under $X$). By de Franchis's theorem, this is a finite set if $g_0 \geq 2$ \cite{dF} (see e.g.~\cite{Mar} for a modern proof).

For $(C_1,\phi_1), (C_2,\phi_2) \in \Mor(X,\geq\!\!g_0)$, write $(C_1,\phi_1) \sim (C_2,\phi_2)$ if there exists $(C,\phi) \in \Mor(X,\geq\!\!g_0)$ such that both $\phi_i$ factor through $\phi$. This is equivalent to the statement that in the Stein factorisations
\[
X \stackrel{\phi'_i}\rA C_i' \stackrel{f_i}\rA C_i
\]
of the $\phi_i$, the pairs $(C_i',\phi'_i)$ agree in $\Mor(X,\geq\!\!g_0)$.
\end{Def}

\begin{Rmk}
If $f \colon X \to Y$ is a dominant morphism of normal varieties, then $f_* \colon \pi_1^{\et}(X) \to \pi_1^{\et}(Y)$ is an open homomorphism \cite[Lem.~11]{Kol} (see also \cite[Lem.~5.3.1]{Thesis} for a shorter and more general proof), which is surjective if $f$ has geometrically connected fibres. Recall that a continuous homomorphism 
of profinite groups is open if and only if its image has finite index.
\end{Rmk}

\begin{Rmk}
The group $H^1_{\et}(X, \Z_\ell)$ can be identified with the (continuous) group cohomology $H^1(\pi_1^{\et,\ell}(X),\Z_\ell) = \Hom\cts(\pi_1^{\et,\ell}(X),\Z_\ell)$, so any morphism $\rho \colon \pi_1^{\et,\ell}(X) \to \Gamma_g^\ell$ induces a pullback
\[
\rho^* \colon H^1(\Gamma_g^\ell,\Z_\ell) \to H^1_{\et}(X,\Z_\ell).
\]
\end{Rmk}

\begin{Def}\label{Def equiv Hom}
Write $\Hom^\circ(\pi_1^{\et,\ell}(X),\Gamma_{\geq g_0}^{\ell})$ for the set of open homomorphisms $\rho \colon \pi_1^{\et,\ell}(X) \to \Gamma_g^\ell$ for $g \geq g_0$, up to isomorphism (as groups under $\pi_1^{\et,\ell}(X)$). 
For elements $\rho_1, \rho_2 \in \Hom^\circ(\pi_1^{\et,\ell}(X),\Gamma_{\geq g_0}^\ell)$, we write $\rho_1 \sim \rho_2$ if there exists $\rho \in \Hom^\circ(\pi_1^{\et,\ell}(X),\Gamma_{\geq g_0}^\ell)$ such that the abelianisations $\rho_i^{\ab} \colon \pi_1^{\et,\ell}(X)^{\ab} \to \Gamma_{g_i}^{\ell,\ab}$ for $i \in \{1,2\}$ both factor through $\rho^{\ab}$. 
Equivalently, the pullbacks in $H^1_{\et}(X,\Z_\ell)$ satisfy
\[
\rho_i^* H^1(\Gamma_{g_i}^\ell,\Z_\ell) \subseteq \rho^* H^1(\Gamma_g^\ell,\Z_\ell),
\]
as subgroups of $H^1_{\et}(X,\Z_\ell)$, for $i \in \{1,2\}$. 
\end{Def}

\begin{Thm}\label{Thm bijection}
Let $X$ be a smooth proper variety over an algebraically closed field $k$ of characteristic $0$, let $\ell$ be a prime, and let $g_0 \geq 2$. Then the association $\phi \mapsto \phi_*$ induces a bijection
\[
\alpha \colon \Mor(X,\geq\!\!g_0)\raisebox{-.5em}{$\!\!\diagup$}\raisebox{-.8em}{$\!\!\sim$}\stackrel\sim\rA\ \!\Hom^\circ\!\left(\pi_1^{\et,\ell}(X),\Gamma_{\geq g_0}^\ell\right)\raisebox{-.8em}{$\!\!\!\diagup$}\raisebox{-1.1em}{$\!\!\sim$}
\]
on equivalence classes for the relations of \boldref{Def equiv Mor} and \boldref{Def equiv Hom}.
\end{Thm}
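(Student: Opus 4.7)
The plan is to reduce to $k = \C$ (via the Lefschetz principle and the comparison $\pi_1^{\et,\ell}(X) \cong \pi_1\top(X^{\mathrm{an}})^{(\ell)}$) and then refine Beauville's proof to keep track of abelianized factorizations. The guiding observation is that an equivalence class of $(C,\phi)$ with $\phi$ having connected fibres is encoded by the sublattice $\phi^* H^1(C,\Z_\ell) \subseteq H^1_{\et}(X,\Z_\ell)$, and an equivalence class of $\rho$ is encoded by $\rho^{\ab,*}H^1(\Gamma_g^\ell, \Z_\ell)$; the theorem thus amounts to matching these sublattices. Well-definedness of $\alpha$ is then immediate: if $(C,\phi)$ dominates $(C_i,\phi_i)$ via $f_i\colon C \to C_i$, then $\phi_{i,*}^{\ab} = f_{i,*}^{\ab} \circ \phi_*^{\ab}$, witnessing $\phi_{1,*} \sim \phi_{2,*}$.

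\textbf{Surjectivity.} Given an open $\rho\colon \pi_1^{\et,\ell}(X) \to \Gamma_g^\ell$ with $g \geq g_0$, I first replace $\Gamma_g^\ell$ by $\im(\rho)$, which is again of the form $\Gamma_{g'}^\ell$ for some $g' \geq g$, since open subgroups of pro-$\ell$ surface groups correspond to finite étale covers of a genus-$g$ curve (hence again curves, of higher genus). So WLOG $\rho$ is surjective. Then $V_\ell := \rho^{\ab,*}H^1(\Gamma_{g'}^\ell, \Z_\ell)$ is a rank-$2g'$ sublattice of $H^1_{\et}(X,\Z_\ell)$, and functoriality of the ring structure (via $H^*(\Gamma_{g'}^\ell,\Z_\ell) \to H^*(\pi_1^{\et,\ell}(X), \Z_\ell) \to H^*_{\et}(X,\Z_\ell)$) forces $V_\ell \wedge V_\ell$ to land in the image of the rank-one $H^2(\Gamma_{g'}^\ell,\Z_\ell)$. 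A pro-$\ell$ adaptation of Beauville's argument then produces $\psi\colon X \to C$ with $g(C) \geq g'$, connected fibres, and $\psi^* H^1(C, \Q_\ell) \supseteq V_\ell \otimes \Q_\ell$; consequently $\rho^{\ab}$ factors through $\psi_*^{\ab}$, so $\rho \sim \psi_*$.

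\textbf{Injectivity.} Suppose $\phi_{1,*} \sim \phi_{2,*}$ is witnessed by some $\rho$. By surjectivity $\rho \sim \psi_*$ for some $\psi\colon X \to C$, so $\phi_i^* H^1(C_i,\Q_\ell) \subseteq \psi^* H^1(C,\Q_\ell)$ inside $H^1_{\et}(X,\Q_\ell)$. Via the Tate modules of the Jacobians this inclusion yields a $\Z_\ell$-linear map $T_\ell \Jac(C) \to T_\ell \Jac(C_i)$ factoring $T_\ell \phi_{i,*}$. Since this map vanishes on $T_\ell \Alb(F)$ for $F$ a generic fibre of $\psi$, the composition $F \to \Alb(F) \to \Jac(C_i)$ is constant; using $C_i \hookrightarrow \Jac(C_i)$ for $g(C_i) \geq 1$, this forces $\phi_i|_F$ to be constant, so by the universal property of the Stein factorization $\phi_i$ factors through $\psi$, giving $(C_1,\phi_1) \sim (C_2,\phi_2)$ via $(C,\psi)$.

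\textbf{Main obstacle.} The crux is the pro-$\ell$ adaptation in the surjectivity step. Classically, $\rho^* H^1(\Gamma_g, \Q)$ is automatically a $\Q$-subspace of $H^1(X, \Q)$ to which Castelnuovo--de Franchis applies directly. Here $V_\ell$ is only a $\Z_\ell$-sublattice that need not descend to $\Q$. The rank-one cup-product condition in $H^2$ is the key extra input: combined with the Hodge decomposition of $H^2(X)$ and a Lefschetz-$(1,1)$-type descent applied to the resulting line in $H^2$ (which is rationally algebraic), it should force $V_\ell \otimes \Q_\ell$ to be the $\Q_\ell$-extension of a rational sub-Hodge structure on $H^1(X, \Q)$, at which point Beauville's use of Catanese's form of Castelnuovo--de Franchis produces $\psi$. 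Making this descent precise, without relying on a lift of $\rho$ to a topological surjection, is the heart of the refinement.
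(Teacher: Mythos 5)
The surjectivity step is where your argument has a genuine hole, and it is the entire content of the theorem beyond the classical statement. You reduce to a surjective $\rho$ (note that even this reduction, ``open subgroups of $\Gamma_g^\ell$ are again pro-$\ell$ surface groups'', is not just the topological covering-space fact: pro-$\ell$ completion is only right exact, and one needs the centre-triviality argument of \boldref{Lem pro-ell exact} and \boldref{Cor open subgroup Gamma}), and then appeal to ``a pro-$\ell$ adaptation of Beauville's argument'' producing $\psi\colon X\to C$ with $\rho^{*}H^1(\Gamma_{g'}^\ell,\Z_\ell)\subseteq\psi^*H^1_{\et}(C,\Z_\ell)$ --- but this adaptation is exactly what must be proved, and your proposed mechanism does not support it. The image of $H^2(\Gamma_{g'}^\ell,\Z_\ell)$ in $H^2_{\et}(X,\Z_\ell)$ is a priori just some $\Z_\ell$-submodule (possibly zero), and there is no reason it is spanned by a rational, let alone algebraic, class: knowing it is (a multiple of) a fibre class is essentially equivalent to already knowing $\rho$ comes from a fibration. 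Likewise, Lefschetz $(1,1)$ concerns integral $(1,1)$-classes in $H^2$ and gives no handle on descending an $\ell$-adic sublattice $V_\ell\subseteq H^1_{\et}(X,\Z_\ell)$ to a $\Q$-sub-Hodge structure of $H^1(X,\Q)$; an isotropicity condition on $V_\ell\otimes\Q_\ell$ is not a rationality statement. You acknowledge this yourself (``making this descent precise \ldots is the heart''), so the proposal is a plan rather than a proof; and since your injectivity argument invokes surjectivity, the gap propagates there too.

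The paper's proof avoids any such descent. Each class $\eta\in\rho^*H^1(\Gamma_g^\ell,\Z_\ell)$ is converted, via $H^1_{\et}(X,\Z/\ell^n)\cong H^1_{\et}(X,\mu_{\ell^n})$, into the tower of $\ell^n$-torsion line bundles $\mathscr L_n(\eta)$, and $\rho$ is used through the cyclic covers $X_n\to X$ attached to $\pi_1^{\et,\ell}(X)\twoheadrightarrow\Z/\ell^n$: the group theory above shows $\pi_1^{\et}(X_n)$ surjects onto $\Gamma^\ell_{\ell^n(g-1)+1}$, so $h^1(X_n,\mathcal O_{X_n})\geq \ell^n(g-1)+1$ grows, forcing $H^1(X,\mathscr L_n(\eta)^{\otimes i})\neq 0$ for infinitely many $n$. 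Beauville's generic vanishing theorem for torsion line bundles \cite{BeauAnnulation} (itself resting on \cite{GL}) together with de Franchis finiteness and a pigeonhole argument then produces a single fibration $\phi$ with $\langle\eta\rangle\subseteq\phi^*\Pic^0(C)$, hence $\eta\in\phi^*H^1_{\et}(C,\Z_\ell)$; this is \boldref{Prop Beauville for H^1}. The locus of all such $\eta$ is then the wedge of the linearly disjoint saturated submodules $\phi^*H^1_{\et}(C,\Z_\ell)$ (\boldref{Cor S wedge}), which yields the inverse map $\beta$ and both surjectivity and injectivity at once; the reduction to the smooth projective case (Chow plus resolution, birational invariance of $\pi_1$) is also needed and is missing from your reduction to $k=\C$. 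If you want to salvage your outline, the torsion-line-bundle route is the tool that replaces the rational descent you were hoping for.
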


The proof will be given after \boldref{Cor S wedge}. By \boldref{Cor open subgroup Gamma} below, the classical Siu--Beauville theorem amounts to the statement that one side is nonempty if and only if the other is.

\begin{Rmk}\label{Rmk canonical representative}
In $\Mor(X,\geq\!\!g_0)$, every equivalence class $\mathcal C$ for $\sim$ has a canonical representative given by the unique $(C,\phi) \in \mathcal C$ such that $\phi_* \mathcal O_X = \mathcal O_C$. A priori, equivalence classes in $\Hom^\circ(\pi_1^{\et,\ell}(X),\Gamma_{\geq g_0}^\ell)$ do not have a preferred representative (see also \boldref{Rmk bijection alternative}). 
In fact, it is not even clear that $\sim$ defines an equivalence relation on $\Hom^\circ(\pi_1^{\et,\ell}(X),\Gamma_{\geq g_0}^\ell)$; this will follow from the proof.
\end{Rmk}

We first discuss some general properties of the groups $\Gamma_g^\ell$. The following result is an unstated consequence of \cite{And}.

\begin{Lemma}\label{Lem pro-ell exact}
Let $1 \to N \to G \to H \to 1$ be an exact sequence of finitely generated groups. If $H$ is an $\ell$-group and $N^\ell$ has trivial centre, then the pro-$\ell$ completion
\[
1 \to N^\ell \to G^\ell \to H^\ell \to 1
\]
is exact.
\end{Lemma}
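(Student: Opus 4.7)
The plan is to verify the three parts of exactness one at a time: surjectivity of $G^\ell \to H^\ell$, identification of its kernel with the image of $N^\ell$, and injectivity of $N^\ell \to G^\ell$. Since $H$ is a finite $\ell$-group, $H^\ell = H$, and the first two steps are essentially formal. Surjectivity of $G^\ell \twoheadrightarrow H$ is immediate from the universal property of pro-$\ell$ completion. For the middle, the closed normal subgroup $\bar N \trianglelefteq G^\ell$ topologically generated by the image of $N$ satisfies $G^\ell / \bar N = H$ (it is a pro-$\ell$ group receiving a map from $G/N = H$), and coincides with the image of $N^\ell \to G^\ell$, since this image is closed and topologically generated by the image of $N$. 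So the real content is injectivity of $N^\ell \to G^\ell$, which must exploit the trivial-centre hypothesis.

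\textbf{Main step.} To prove injectivity I would interpose a pro-$\ell$ extension
\[
1 \to N^\ell \to \tilde G \to H \to 1
\]
that receives a map from $G$ whose restriction to $N$ agrees with the canonical $N \to N^\ell$. By functoriality, the conjugation action $G \to \Aut(N)$ extends to $\psi \colon G \to \Aut(N^\ell)$; the subgroup $N$ acts on $N^\ell$ via inner automorphisms by its own image, so the composition $G \to \operatorname{Out}(N^\ell)$ descends to a map $\phi \colon H \to \operatorname{Out}(N^\ell)$. The trivial-centre hypothesis $Z(N^\ell) = 1$ is exactly what makes
\[
1 \to N^\ell \to \Aut(N^\ell) \to \operatorname{Out}(N^\ell) \to 1
\]
exact, so I would define $\tilde G$ as the pullback $\Aut(N^\ell) \times_{\operatorname{Out}(N^\ell)} H$ along $\phi$. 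This $\tilde G$ fits into the desired short exact sequence, and as an extension of a finite $\ell$-group by a pro-$\ell$ group it is itself pro-$\ell$.

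\textbf{Conclusion and main obstacle.} The pair $(\psi, G \twoheadrightarrow H)$ defines a map $G \to \tilde G$ by the universal property of the pullback, which then factors through $G^\ell$ because $\tilde G$ is pro-$\ell$. Its restriction to $N$ agrees with $N \to N^\ell \hookrightarrow \tilde G$: both send $n$ to the inner automorphism given by the image of $n$ in $N^\ell$. The universal property of pro-$\ell$ completion then forces the two continuous maps $N^\ell \rightrightarrows \tilde G$, namely $N^\ell \to G^\ell \to \tilde G$ and the tautological inclusion, to coincide. Since the latter is injective, so is $N^\ell \to G^\ell$, closing the argument. The main obstacle is producing $\tilde G$: one must recognise that the trivial-centre hypothesis is precisely what allows the outer action $H \to \operatorname{Out}(N^\ell)$ to be realised by a genuine extension, and then verify carefully that the restriction of $G \to \tilde G$ to $N$ lands in $N^\ell \subseteq \tilde G$ compatibly with the canonical map, so that the uniqueness clause of the universal property can close the loop.
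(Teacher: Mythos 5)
Your argument is correct, but it follows a genuinely different route from the paper. The paper disposes of the lemma in three lines by invoking a criterion of Anderson (the reference [And], Prop.~3 and Cor.~7): given that $N^\ell$ has trivial centre, exactness of the pro-$\ell$ completion follows once the image of the conjugation action $G \to \Aut(N^{\ab}/\ell)$ is an $\ell$-group, and this holds because the action is trivial on $N$ and hence factors through the $\ell$-group $H$. You instead reprove the relevant special case of that criterion from scratch: right exactness is formal, and injectivity of $N^\ell \to G^\ell$ is extracted by mapping $G^\ell$ to the pullback $\tilde G = \Aut(N^\ell) \times_{\operatorname{Out}(N^\ell)} H$, where the trivial-centre hypothesis realises $N^\ell$ as $\operatorname{Inn}(N^\ell) \subseteq \Aut(N^\ell)$ and the hypothesis that $H$ is an $\ell$-group is exactly what makes $\tilde G$ pro-$\ell$, hence a legitimate target for the universal property of $G^\ell$. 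Your version is self-contained and makes visible where each hypothesis enters, whereas the paper's is shorter and defers the group theory to the literature. Two routine points you leave implicit and should record: first, that $\tilde G$ really is a profinite, hence pro-$\ell$, group --- either topologise it so that $N^\ell$ is an open subgroup (conjugation by elements of $\tilde G$ acts on $N^\ell$ through continuous automorphisms, namely the first pullback coordinate), or use that $\Aut(N^\ell)$ is itself profinite because $N^\ell$ is topologically finitely generated; second, that an arbitrary abstract homomorphism from a discrete group to a pro-$\ell$ group factors uniquely and continuously through the pro-$\ell$ completion (compose with the finite $\ell$-quotients of the target), which is what you use both to produce $G^\ell \to \tilde G$ and to identify the two maps $N^\ell \rightrightarrows \tilde G$ via density of the image of $N$.
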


\begin{proof}
By \cite[Prop.~3~and~Cor.~7]{And}, it suffices to show that the image of the conjugation action $G \to \Aut(N^{\ab}/\ell)$ is an $\ell$-group. But this map is trivial on $N$ since conjugation of $N$ acts trivially on $N^{\ab}$. Hence, it factors through $G/N = H$, which is an $\ell$-group.
\end{proof}

\begin{Cor}\label{Cor open subgroup Gamma}
Let $g \geq 2$, and let $U \subseteq \Gamma_g^\ell$ be an open subgroup of index $\ell^n$. Then $U \cong \Gamma_{\ell^n(g-1)+1}^\ell$.
\end{Cor}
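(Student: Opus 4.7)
The plan is to induct on $n$, using Lemma~\boldref{Lem pro-ell exact} as the main ingredient together with the standard correspondence between finite index subgroups of $\Gamma_g$ and finite topological covers of the genus $g$ surface $\Sigma_g$. The base case $n = 0$ is trivial. For the inductive step, since $\Gamma_g^\ell$ is a pro-$\ell$ group, every proper open subgroup is contained in a maximal open subgroup, and maximal open subgroups of a pro-$\ell$ group are normal of index $\ell$. Choose such $M$ with $U \subseteq M \subsetneq \Gamma_g^\ell$. If I can identify $M \cong \Gamma_{g_1}^\ell$ with $g_1 = \ell(g-1) + 1$, then $U$ is an open subgroup of $\Gamma_{g_1}^\ell$ of index $\ell^{n-1}$, and induction yields $U \cong \Gamma_{g'}^\ell$ with
\[
g' \,=\, \ell^{n-1}(g_1 - 1) + 1 \,=\, \ell^{n-1} \cdot \ell(g-1) + 1 \,=\, \ell^n(g-1) + 1,
\]
which is the claim.

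The heart of the argument is therefore the normal index $\ell$ case. The composition $\Gamma_g \to \Gamma_g^\ell \to \Gamma_g^\ell/M \cong \Z/\ell$ is surjective (since $\Gamma_g$ has dense image in its pro-$\ell$ completion and the target is discrete), so its kernel $H$ is a normal subgroup of $\Gamma_g$ of index $\ell$. By covering space theory, $H$ is the topological fundamental group of a connected unramified degree $\ell$ cover $\tilde\Sigma \to \Sigma_g$, which has genus $g_1 = \ell(g - 1) + 1$ by Riemann--Hurwitz; thus $H \cong \Gamma_{g_1}$ as an abstract group. Since $g \geq 2$ and $\ell \geq 2$, we have $g_1 \geq 3$, and it is a classical fact (e.g.\ via the Demushkin structure of the pro-$\ell$ completion of a surface group of genus $\geq 2$) that $\Gamma_{g_1}^\ell$ has trivial centre. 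Applying Lemma~\boldref{Lem pro-ell exact} to
\[
1 \ra H \ra \Gamma_g \ra \Z/\ell \ra 1
\]
then gives an exact sequence $1 \ra \Gamma_{g_1}^\ell \ra \Gamma_g^\ell \ra \Z/\ell \ra 1$, and matching this with the defining sequence for $M$ identifies $M = \Gamma_{g_1}^\ell$.

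The main obstacle I expect is the trivial-centre property of $\Gamma_h^\ell$ for $h \geq 2$: it is the only non-formal input, and a completely self-contained treatment would require either quoting the structure theory of Demushkin groups or a short cohomological argument using that $\Gamma_h^\ell$ is a Poincar\'e duality group of dimension $2$. Everything else is a formal induction combining covering space theory over $\Sigma_g$ with Lemma~\boldref{Lem pro-ell exact}; note in particular that the inductive hypothesis is applied to the pro-$\ell$ surface group $\Gamma_{g_1}^\ell$ (not to $\Gamma_g^\ell$ directly), so the statement of the lemma must be universally quantified over the genus, which it is.
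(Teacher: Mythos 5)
Your proof is correct and follows essentially the same route as the paper: reduce by solubility of $\ell$-groups to the normal (index $\ell$) case, identify the kernel of $\Gamma_g \twoheadrightarrow \Z/\ell$ as $\Gamma_{\ell(g-1)+1}$ via covering space theory and Riemann--Hurwitz, and apply \boldref{Lem pro-ell exact} using the trivial centre of the pro-$\ell$ completion. The only difference is cosmetic: the paper handles a normal subgroup of index $\ell^n$ in one step and cites \cite[Prop.~18]{And} for the trivial-centre input, where you induct on index-$\ell$ steps and invoke Demushkin/Poincar\'e duality structure theory for the same fact.
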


The corresponding statement for $\Gamma_g$ (resp.\ $\widehat{\Gamma}_g$) follows from topology (resp.\ algebraic geometry). The difficulty is that pro-$\ell$ completion (resp.~maximal pro-$\ell$ quotient) is in general only right exact.

\begin{proof}
By solubility of $\ell$-groups, we may reduce to the case that $U$ is normal (or even $n = 1$). If $H = \Gamma_g^\ell/U$, then consider the surjection $\Gamma_g \twoheadrightarrow H$. Its kernel is $\Gamma_{\ell^n(g-1)+1}$, whose pro-$\ell$ completion has trivial centre \cite[Prop.~18]{And}. Therefore, \boldref{Lem pro-ell exact} implies that $U \cong \Gamma_{\ell^n(g-1)+1}^\ell$.
\end{proof}
\vskip-\lastskip
The following lemma addresses injectivity of the map of \boldref{Thm bijection}. It also holds in positive characteristic, which will be used in the proof of \boldref{Thm lift morphism to curve}.

\begin{Lemma}\label{Lem dominant to product}
Let $X$ be a smooth proper variety over an algebraically closed field $k$, and let $\ell$ be a prime invertible in $k$. Let $(C_i,\phi_i) \in \Mor(X,\geq\!2)$ for $i \in \{1,2\}$, and consider the following statements.
\begin{enumerate}
\item The $\phi_i$ satisfy $(C_1,\phi_1) \sim (C_2,\phi_2)$;\label{Item sim Mor}
\item The product morphism $\phi \colon X \to C_1 \times C_2$ is not dominant;\label{Item not dominant}
\item The pullbacks $\phi_i^* H^1_{\et}(C_i,\mathbb Z_\ell) \subseteq H^1_{\et}(X,\mathbb Z_\ell)$ have nonzero intersection.\label{Item H^1}
\end{enumerate}
Then the implications $\ref{Item sim Mor} \LRa \ref{Item not dominant} \La \ref{Item H^1}$ hold. If $\phi_{i,*} \mathcal O_X = \mathcal O_{C_i}$ for $i \in \{1,2\}$, then the reverse implication $\ref{Item not dominant} \Ra \ref{Item H^1}$ holds as well.
\end{Lemma}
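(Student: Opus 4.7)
The strategy is to establish $(1) \Leftrightarrow (2)$ by direct geometric arguments, prove $(3) \Rightarrow (2)$ via K\"unneth and the injectivity of pullback under dominant morphisms, and finally prove $(2) \Rightarrow (3)$ under the Stein hypothesis by exploiting the fact that the factorization morphisms to the $C_i$ are forced to be isomorphisms.

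For $(1) \Rightarrow (2)$: if $\phi_i = f_i \circ \phi$ both factor through a common curve $\phi \colon X \to C$, then $(\phi_1,\phi_2)$ factors through $(f_1,f_2)\colon C \to C_1 \times C_2$, whose image is at most one-dimensional in the two-dimensional product. For $(2) \Rightarrow (1)$: the image $Z \subseteq C_1 \times C_2$ of $\phi$ surjects onto both $C_i$ by non-constancy of the $\phi_i$, so $\dim Z \geq 1$, and together with $(2)$ this forces $\dim Z = 1$. Taking the Stein factorization $X \to C \to Z$ produces a smooth projective curve $C$ with $\phi_* \mathcal O_X = \mathcal O_C$ through which both $\phi_i$ factor, and the finite surjective composition $C \to Z \to C_i$ of smooth projective curves gives $g(C) \geq g(C_i) \geq 2$ by Riemann--Hurwitz (applied to the separable part, and using invariance of genus under Frobenius twists in characteristic $p$). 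Thus $(C, \phi)$ witnesses $(1)$.

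For $(3) \Rightarrow (2)$, I argue by contrapositive: if $\phi$ is dominant, then the pullback $\phi^* \colon H^1_{\et}(C_1 \times C_2, \Z_\ell) \to H^1_{\et}(X, \Z_\ell)$ is injective by the injectivity of pullback on Weil cohomology under surjective morphisms of smooth proper varieties, as cited in the introduction. Combined with the K\"unneth decomposition $H^1(C_1 \times C_2) = H^1(C_1) \oplus H^1(C_2)$ and the individual injectivity of each $\phi_i^*$ (which follows from dominance of $\phi_i$), this forces $\phi_1^* H^1(C_1) \cap \phi_2^* H^1(C_2) = 0$, contradicting (3).

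The heart of the argument is $(2) \Rightarrow (3)$ under $\phi_{i,*} \mathcal O_X = \mathcal O_{C_i}$. From $(2) \Rightarrow (1)$ I obtain a common factorization $\phi_i = f_i \circ \phi$ with $\phi_* \mathcal O_X = \mathcal O_C$. Composing pushforwards yields
\[
f_{i,*} \mathcal O_C = f_{i,*} \phi_* \mathcal O_X = \phi_{i,*} \mathcal O_X = \mathcal O_{C_i}.
\]
Examining stalks at the generic point of $C_i$ gives $k(C) = k(C_i)$, so $f_i$ is birational, hence an isomorphism of smooth projective curves. Consequently each $f_i^*$ is an isomorphism on $H^1_{\et}(-, \Z_\ell)$, and therefore
\[
\phi_1^* H^1_{\et}(C_1, \Z_\ell) = \phi^* H^1_{\et}(C, \Z_\ell) = \phi_2^* H^1_{\et}(C_2, \Z_\ell)
\]
as subgroups of $H^1_{\et}(X, \Z_\ell)$; this common subgroup is nonzero because $\phi^*$ is injective (by dominance of $\phi$) and $g(C) \geq 2$.

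The main obstacle is conceptual rather than technical: recognising that the Stein assumption forces the $f_i$ to be isomorphisms, so that the two subgroups $\phi_i^* H^1(C_i)$ actually coincide inside $H^1_{\et}(X, \Z_\ell)$ rather than merely overlapping. All other ingredients — K\"unneth, the injectivity of pullback on Weil cohomology for dominant morphisms of smooth proper varieties, Riemann--Hurwitz, the fact that a birational morphism of smooth projective curves is an isomorphism, and the function-field criterion for birationality — are standard and work in any characteristic with $\ell$ invertible, which is what the remark about positive characteristic requires.
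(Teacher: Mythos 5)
Your proof is correct and follows essentially the same route as the paper: the equivalence of (1) and (2) via the image curve and its Stein factorisation, the implication from (3) via K\"unneth and the injectivity of pullback for surjective morphisms of smooth proper varieties, and, under the Stein hypothesis, the observation that the common factorisation forces $(C_i,\phi_i)$ to coincide with $(C,\phi)$ so the two subgroups of $H^1_{\et}(X,\Z_\ell)$ are equal and nonzero. You merely spell out details (the genus bound for the Stein curve, $f_{i,*}\mathcal O_C=\mathcal O_{C_i}$ forcing $f_i$ to be an isomorphism) that the paper leaves implicit.
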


\begin{proof}
Note that $\phi$ is not dominant if and only if its image is contained in a (possibly singular) curve in $C_1 \times C_2$, showing $\ref{Item sim Mor} \LRa \ref{Item not dominant}$. If $\phi \colon X \to C_1 \times C_2$ is dominant, then the pullback
\[
\phi^* \colon H^1_{\et}(C_1 \times C_2,\Z_\ell) \to H^1_{\et}(X,\Z_\ell)
\]
is injective \cite[Prop.~1.2.4]{KleDix}. Therefore, the pullbacks $\phi^*H^1_{\et}(C_i,\Z_\ell)$ are linearly disjoint, which proves $\ref{Item H^1} \Ra \ref{Item not dominant}$. Finally, if $\phi_{i,*} \mathcal O_X = \mathcal O_{C_i}$, then $(C_1,\phi_1) \sim (C_2,\phi_2)$ implies $(C_1,\phi_1) \cong (C_2,\phi_2)$, so $\ref{Item sim Mor} \Ra \ref{Item H^1}$ is clear.
\end{proof}
\vskip-\lastskip
The implication $\ref{Item sim Mor} \Ra \ref{Item H^1}$ is false in general: if $X = C \subseteq C_1 \times C_2$ is a smooth very ample divisor, then $H^1_{\et}(X,\Z_\ell) \cong H^1_{\et}(C_1,\Z_\ell) \oplus H^1_{\et}(C_2,\Z_\ell)$, so that the parts coming from $C_1$ and $C_2$ are linearly disjoint.

\begin{Def}\label{Def line bundle subgroup}
Let $X$ be a variety over an algebraically closed field $k$, and let $\ell$ be a prime number that is invertible in $k$. For a class $\eta \in H^1_{\et}(X,\mathbb Z_\ell)$ and $n \in \Z_{> 0}$, we write $\mathscr L_n(\eta)$ for the $\ell^n$-torsion line bundle given by
\[
\eta\!\! \pmod{\ell^n} \in H^1_{\et}(X,\mathbb Z/\ell^n) \cong H^1_{\et}(X,\mu_{\ell^n}),
\]
and we write $\langle \eta \rangle \subseteq \Pic^0(X)[\ell^{\infty}]$ for the group generated by $\mathscr L_n(\eta)$ for $n \in \mathbb Z_{>0}$. The set of $\eta \in H^1_{\et}(X,\Z_\ell)$ such that $H^1(X,\mathscr L) \neq 0$ for all $\mathscr L \in \langle \eta \rangle$ is denoted by $S = S^1_\ell$ (in analogy with the Green--Lazarsfeld loci $S^i \subseteq \PIC^0_X$ \cite{GL}).
\end{Def}

The key input of the proof of \boldref{Thm bijection} is the following $\ell$-adic version of Beauville's corollary \cite[Thm.~1]{BeauAnnulation} of the Green--Lazarsfeld generic vanishing theorem \cite{GL}. The proof relies on Beauville's result [\emph{loc.\ cit.}].

\begin{Prop}\label{Prop Beauville for H^1}
Let $X$ be a smooth projective variety over an algebraically closed field $k$ of characteristic $0$, and let $\eta \in H^1_{\et}(X,\mathbb Z_\ell)$ be a nonzero element. Then the following are equivalent:
\begin{enumerate}
\item There exists $(C,\phi) \in \Mor(X,\geq\!2)$ such that $\eta \in \phi^* H^1_{\et}(C,\mathbb Z_\ell)$;\label{Item phi}
\item There exists $\rho \in \Hom^\circ(\pi_1^{\et,\ell}(X),\Gamma_{\geq 2})$ such that $\eta \in \rho^* H^1(\Gamma_g^\ell,\Z_\ell)$;\label{Item rho}
\item For all $\mathscr L \in \langle \eta \rangle$, we have $H^1(X,\mathscr L) \neq 0$;\label{Item all L}
\item For infinitely many $\mathscr L \in \langle \eta \rangle$, we have $H^1(X,\mathscr L) \neq 0$.\label{Item many L}
\end{enumerate}
\end{Prop}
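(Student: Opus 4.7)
The plan is to close the cycle $(\ref{Item phi}) \Rightarrow (\ref{Item all L}) \Rightarrow (\ref{Item many L}) \Rightarrow (\ref{Item phi})$ and to prove $(\ref{Item phi}) \Leftrightarrow (\ref{Item rho})$ separately. The implications $(\ref{Item phi}) \Rightarrow (\ref{Item all L})$ and $(\ref{Item all L}) \Rightarrow (\ref{Item many L})$ are direct. For the first I will replace $\phi$ by its Stein factorisation to assume $\phi_* \mathcal O_X = \mathcal O_C$, so that every $\mathscr L \in \langle \eta \rangle$ is a pullback $\phi^* \mathscr M$ of an $\ell$-power torsion line bundle on $C$; then the projection formula gives $H^1(X, \phi^* \mathscr M) = H^1(C, \mathscr M)$, which is nonzero by Riemann--Roch since $g(C) \geq 2$ forces $h^1(C, \mathscr M) \geq g - 1 \geq 1$. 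The second implication is automatic because $\eta \neq 0$ makes $\langle \eta \rangle \cong \mathbb Q_\ell/\mathbb Z_\ell$ infinite, so ``all'' implies ``infinitely many''.

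The heart of the argument is $(\ref{Item many L}) \Rightarrow (\ref{Item phi})$. I will invoke Beauville's refinement \cite[Thm.~1]{BeauAnnulation} of the Green--Lazarsfeld generic vanishing theorem: every positive-dimensional component of $S^1(X) \subseteq \Pic^0(X)$ is of the form $\tau + f^* \Pic^0(C)$ for some morphism $f \colon X \to C$ to a curve of genus $\geq 2$ and some torsion $\tau$. From $(\ref{Item many L})$ I extract a sequence $\mathscr M_k = a_k \mathscr L_{n_k}(\eta) \in S^1(X)$ with $n_k \to \infty$ and $\gcd(a_k, \ell) = 1$; pigeonhole on the finitely many components of $S^1(X)$ lands an infinite subsequence in a single component $\tau + B$ with $B = f^* \Pic^0(C)$, which must be positive-dimensional since it contains torsion points of arbitrarily large $\ell$-power order. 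Projecting via $\pi \colon \Pic^0(X) \to A := \Pic^0(X)/B$ and analysing the fixed class $\bar\tau := \pi(\tau) \in A$ against the Tate-module image $\pi_*(\eta) \in T_\ell A$, the identity $\ell \cdot \mathscr L_{n+1}(\eta) = \mathscr L_n(\eta)$ combined with the finite $\ell$-height of $\bar\tau$ will force $\bar\tau = 0$ after thinning the subsequence; hence $\pi_*(\eta) = 0$, so $\eta \in T_\ell B = f^* H^1_{\et}(C, \mathbb Z_\ell)$.

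For $(\ref{Item phi}) \Leftrightarrow (\ref{Item rho})$: the forward direction is immediate upon setting $\rho := \phi_*$, which is open by Koll\'ar's lemma recalled above. For the reverse, I first reduce to $\rho$ surjective using \boldref{Cor open subgroup Gamma}, then apply classical Siu--Beauville (\boldref{Thm SB}) to produce some $\phi \colon X \to C$ with $g(C) \geq 2$; Beauville's Castelnuovo--de Franchis construction, transported to the $\ell$-adic setting via the comparison isomorphism with singular cohomology (valid over $\mathbb C$, and in general via the Lefschetz principle), will ensure the matching $\phi^* H^1_{\et}(C, \mathbb Z_\ell) \supseteq \rho^* H^1_{\et}(\Gamma_g^\ell, \mathbb Z_\ell)$, so $\eta$ lies in the required pullback. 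The hard part will be the Tate-module bookkeeping in $(\ref{Item many L}) \Rightarrow (\ref{Item phi})$ --- absorbing $\tau$ into $B$. The key observation is that the transition maps in the inverse limit defining $T_\ell A$ are multiplications by $\ell$, so any fixed torsion class in $A$ is eventually killed along a sufficiently sparse subsequence, forcing $\bar\tau = 0$ and leaving only the pullback from the curve.
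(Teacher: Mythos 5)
The serious problem is your treatment of $\ref{Item rho} \Ra \ref{Item phi}$. You propose to ``apply classical Siu--Beauville (\boldref{Thm SB})'' to the open map $\rho \colon \pi_1^{\et,\ell}(X) \to \Gamma_g^\ell$ after making it surjective via \boldref{Cor open subgroup Gamma}. But the hypothesis of \boldref{Thm SB} is a surjection of \emph{discrete} groups $\pi_1\top(X) \twoheadrightarrow \Gamma_{g_0}$; condition \ref{Item rho} only provides a surjection of pro-$\ell$ groups, and the induced map $\pi_1\top(X) \to \Gamma_g^\ell$ merely has dense image. There is no way to promote this to a surjection onto a discrete surface group --- that such a quotient exists at all is part of what the Proposition (and \boldref{Thm bijection}) asserts, so the appeal is circular. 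The back-up clause, ``Beauville's Castelnuovo--de Franchis construction transported to the $\ell$-adic setting via the comparison isomorphism,'' is not an argument: that mechanism extracts a $2$-dimensional isotropic subspace of $H^0(X,\Omega^1_X)$ from real cohomology classes pulled back along a genuine surjection onto $\Gamma_g$, and nothing analogous is available starting from a $\Z_\ell$-class; moreover, even if some fibration $\phi$ were produced this way, nothing in your sketch forces $\eta \in \phi^* H^1_{\et}(C,\Z_\ell)$.

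The paper never proves $\ref{Item rho} \Ra \ref{Item phi}$ directly; it proves $\ref{Item rho} \Ra \ref{Item many L}$, and this is the step your proposal has no substitute for. One reduces to $\rho$ surjective by \boldref{Cor open subgroup Gamma}, writes $\eta = \rho^*\tau$ with $\tau \colon \Gamma_g^\ell \to \Z_\ell$, and forms the cyclic $\Z/\ell^n$-covers $\pi_n \colon X_n \to X$ attached to $\eta \bmod \ell^n$. Since $\pi_1^{\et,\ell}(X_n)$ surjects onto $\ker(\Gamma_g^\ell \to \Z/\ell^n) \cong \Gamma_{\ell^n(g-1)+1}^\ell$ (this is where \boldref{Cor open subgroup Gamma}, resting on \boldref{Lem pro-ell exact}, is genuinely needed), Hodge theory gives $h^1(X_n,\mathcal O_{X_n}) \geq \ell^n(g-1)+1$, while $\pi_{n,*}\mathcal O_{X_n} \cong \bigoplus_i \mathscr L_n(\eta)^{\otimes i}$; hence $H^1(X,\mathscr L) \neq 0$ for infinitely many $\mathscr L \in \langle \eta \rangle$, which is \ref{Item many L}. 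As for the rest of your proposal: the cycle $\ref{Item phi} \Ra \ref{Item all L} \Ra \ref{Item many L} \Ra \ref{Item phi}$ is sound and close to the paper's. Note, though, that the paper's $\ref{Item many L} \Ra \ref{Item phi}$ is simpler than yours: it applies \cite[Thm.~1]{BeauAnnulation} to each torsion bundle $\mathscr L_i$ separately, pigeonholes over the finitely many possible target curves (de Franchis), and gets $\langle \eta \rangle \subseteq \phi^*\Pic^0(C)$ outright, with no translate $\tau$ to absorb; the component-structure statement you attribute to Beauville is really Green--Lazarsfeld--Simpson combined with Beauville, so your route also rests on a stronger citation than necessary.
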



\begin{proof}
If $\eta \in \phi^* H^1_{\et}(C,\mathbb Z_\ell)$ for some surjection $\phi \colon X \to C$, then the easy direction  of \cite[Thm.~1]{BeauAnnulation} gives $H^1(X,\mathscr L) \neq 0$ for all $\mathscr L \in \langle \eta \rangle$. This shows $\ref{Item phi} \Ra \ref{Item all L}$. Implications $\ref{Item phi} \Ra \ref{Item rho}$ and $\ref{Item all L} \Ra \ref{Item many L}$ are obvious.

If $H^1(X,\mathscr L_i) \neq 0$ for infinitely many $\mathscr L_i \in \langle \eta \rangle$, then by \cite[Thm.~1]{BeauAnnulation} there exist morphisms $\phi_i \colon X \to C_i$ with $g(C_i) \geq 2$ such that $\mathscr L_i \in \phi_i^* \Pic^0(C_i)$. Since there are only finitely many possible $C_i$, one of them must occur infinitely many times, which forces $\langle \eta \rangle \subseteq \phi_i^* \Pic^0(C_i)$ since $\phi_i^*$ is a group homomorphism. This immediately implies $\eta \in \phi_i^* H^1_{\et}(C_i,\mathbb Z_\ell)$, proving $\ref{Item many L} \Ra \ref{Item phi}$.

Finally, assume $\eta \in \rho^*H^1(\Gamma_g^\ell,\Z_\ell)$ for some $\rho \in \Hom^\circ(\pi_1^{\et,\ell}(X),\Gamma_g)$ with $g \geq 2$. By \boldref{Cor open subgroup Gamma}, we may assume $\rho$ is surjective (increasing $g$ if necessary). Let $\tau \colon \Gamma_g^\ell \to \mathbb Z_\ell$ be the homomorphism such that $\eta = \rho^*(\tau) \in H^1_{\et}(X,\mathbb Z_\ell)$. If $\tau = 0$, then clearly $\eta \in S$, so we may assume $\tau$ is surjective. Then the surjection $\pi_1^{\et,\ell}(X) \twoheadrightarrow \mathbb Z^\ell \twoheadrightarrow \mathbb Z/\ell^n$ corresponds to the cyclic $\mathbb Z/\ell^n$-cover
\[
\pi_n \colon X_n = \SPEC_X\left(\bigoplus_{i=0}^{\ell^n-1} \mathscr L_n(\eta)^{\otimes i}\right) \to X,
\]
where $\mathscr L_n(\eta)$ is as in \boldref{Def line bundle subgroup}. In particular, we find
\begin{equation}\label{Eq pushforward X_n}
H^1(X_n,\mathcal O_{X_n}) = H^1(X,\pi_{n,*} \mathcal O_{X_n}) \cong \bigoplus_{i=0}^{\ell^n-1} H^1\!\left(X,\mathscr L_n(\eta)^{\otimes i}\right).
\end{equation}
But $\pi_1^{\et}(X_n)$ surjects onto $\ker(\Gamma_g^\ell \to \Z/\ell^n)$, which is isomorphic to $\Gamma_{\ell^n(g-1)+1}^\ell$ by \boldref{Cor open subgroup Gamma}, so Hodge theory gives
\[
h^1(X_n,\mathcal O_{X_n}) \geq \ell^n(g-1)+1.
\]
Thus, by (\ref{Eq pushforward X_n}) there are infinitely many $n$ such that $H^1(X_n,\mathscr L_n(\eta)^{\otimes i}) \neq 0$ for some $i \in (\mathbb Z/\ell^n)^\times$, showing that the final implication $\ref{Item rho} \Ra \ref{Item many L}$.
\end{proof}

\begin{Cor}\label{Cor S wedge}
Let $X$ be a smooth projective variety over an algebraically closed field $k$ of characteristic $0$. Then the locus $S \subseteq H^1_{\et}(X,\mathbb Z_\ell)$ of \boldref{Def line bundle subgroup} is the finite union
\[
S = \bigvee_{\substack{\phi \colon X \twoheadrightarrow C\\\phi_*\mathcal O_X = \mathcal O_C}} \phi^* H^1_{\et}(C,\Z_\ell).
\]
of the linearly disjoint saturated $\mathbb Z_\ell$-submodules $\phi^*H^1_{\et}(C,\Z_\ell) \subseteq H^1_{\et}(X,\mathbb Z_\ell)$ for $\phi \colon X \to C$ a morphism to a curve $C$ of genus $g \geq 2$ satisfying $\phi_* \mathcal O_X = \mathcal O_C$.
\end{Cor}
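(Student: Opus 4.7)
The plan is to deduce the corollary by combining \boldref{Prop Beauville for H^1} (which yields the set-theoretic description of $S$) with \boldref{Lem dominant to product} (for linear disjointness), after using Stein factorisation to cut down to canonical representatives, with de Franchis's theorem providing finiteness. Concretely, the equivalence $\ref{Item phi} \LRa \ref{Item all L}$ in \boldref{Prop Beauville for H^1} yields $S = \bigcup_{(C,\phi) \in \Mor(X,\geq\!\!2)} \phi^* H^1_{\et}(C,\Z_\ell)$ as a set. Factoring any $\phi \colon X \to C$ via Stein as $X \stackrel{\phi'}{\to} C' \stackrel{f}{\to} C$, we have $g(C') \geq g(C) \geq 2$, and since $f$ is finite, $f^*$ is injective on $H^1_{\et}(-,\Z_\ell)$, so $\phi^* H^1_{\et}(C,\Z_\ell) \subseteq (\phi')^* H^1_{\et}(C',\Z_\ell)$. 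This shows the union is unchanged when restricted to canonical representatives (\boldref{Rmk canonical representative}), with one contributor per equivalence class in $\Mor(X,\geq\!\!2)/{\sim}$, and de Franchis makes it finite.

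For the linear disjointness of two distinct canonical representatives $(C_1,\phi_1), (C_2,\phi_2)$, note that the triviality of each Stein factorisation means the relation $\sim$ on such pairs reduces to $\cong$, so the pairs are not $\sim$-equivalent. Invoking \boldref{Lem dominant to product} under the hypothesis $\phi_{i,*}\mathcal O_X = \mathcal O_{C_i}$ (which makes $\ref{Item sim Mor}$, $\ref{Item not dominant}$, $\ref{Item H^1}$ all equivalent), the failure of $\ref{Item sim Mor}$ forces the failure of $\ref{Item H^1}$, i.e., the pullback subgroups intersect trivially in $H^1_{\et}(X,\Z_\ell)$.

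Saturation of each $\phi^* H^1_{\et}(C,\Z_\ell) \subseteq H^1_{\et}(X,\Z_\ell)$ reduces to the surjectivity of $\phi_* \colon \pi_1^{\et,\ell}(X) \twoheadrightarrow \Gamma_g^\ell$, a consequence of $\phi$ having geometrically connected fibres: applying $\Hom\cts(-,\Z_\ell)$ to the resulting short exact sequence $1 \to K \to \pi_1^{\et,\ell}(X) \to \Gamma_g^\ell \to 1$ realises $\phi^* H^1_{\et}(C,\Z_\ell)$ as the kernel of restriction to $K$, whose cokernel embeds into the torsion-free group $\Hom\cts(K,\Z_\ell)$. The substantive content of the proof is \boldref{Prop Beauville for H^1} itself; everything else is routine bookkeeping, and I do not anticipate a significant obstacle.
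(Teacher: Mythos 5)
Your proposal is correct, and for the description of $S$ as a union and for the pairwise linear disjointness it follows the paper's own proof: \boldref{Prop Beauville for H^1} identifies $S$ with the union of the pullbacks $\phi^* H^1_{\et}(C,\Z_\ell)$, Stein factorisation reduces to the representatives with $\phi_*\mathcal O_X = \mathcal O_C$ (de Franchis giving finiteness), and \boldref{Lem dominant to product} gives trivial pairwise intersections. Where you genuinely differ is saturatedness. The paper deduces it from property \ref{Item many L} of \boldref{Prop Beauville for H^1}: if $a\eta$ lies in the union for some $a \in \Z_\ell\setminus\{0\}$, then $\langle a\eta\rangle \subseteq \langle\eta\rangle$ shows $\eta$ again satisfies \ref{Item many L}, hence lies in some component, and by linear disjointness in the same one; so saturation there is a consequence of the generic-vanishing input plus the wedge structure. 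You instead prove it directly for each $\phi$: since $\phi_*\mathcal O_X = \mathcal O_C$ forces $\phi_* \colon \pi_1^{\et,\ell}(X) \twoheadrightarrow \Gamma_g^\ell$ to be surjective, $\phi^* H^1_{\et}(C,\Z_\ell)$ is the kernel of restriction $\Hom\cts(\pi_1^{\et,\ell}(X),\Z_\ell) \to \Hom\cts(K,\Z_\ell)$ with $K = \ker \phi_*$, and the quotient embeds into the torsion-free module $\Hom\cts(K,\Z_\ell)$. This is valid and somewhat more self-contained: it does not re-invoke \boldref{Prop Beauville for H^1} and yields saturation of each $\phi^* H^1_{\et}(C,\Z_\ell)$ independently of the decomposition, at the cost of using surjectivity of $\phi_*$ on pro-$\ell$ fundamental groups for maps with $\phi_*\mathcal O_X = \mathcal O_C$ (available from the remark following \boldref{Def equiv Mor}). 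Two cosmetic points: the injectivity of $f^*$ in your Stein-factorisation step is not needed, since $\phi^* H^1_{\et}(C,\Z_\ell) \subseteq \phi'^* H^1_{\et}(C',\Z_\ell)$ is pure functoriality; and for disjointness you only need the unconditional implication \ref{Item H^1}$\,\Ra\,$\ref{Item sim Mor} of \boldref{Lem dominant to product}, not the full equivalence.
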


\begin{proof}
By property \ref{Item phi} of \boldref{Prop Beauville for H^1}, every element $\eta \in S$ is contained in $\phi^* H^1_{\et}(C,\Z_\ell)$ for some surjection $\phi \colon X \to C$ to a smooth projective curve $C$ of genus $g \geq 2$. Taking Stein factorisation, we may assume $\phi_* \mathcal O_X = \mathcal O_C$. For different $C$, these spaces are pairwise linearly disjoint by \boldref{Lem dominant to product}. The saturatedness statement follows since property \ref{Item many L} of \boldref{Prop Beauville for H^1} for $a\eta$ ($a \in \mathbb Z_\ell\setminus\{0\}$) implies the same for $\eta$.
\end{proof}

\begin{proof}[Proof of \boldref{Thm bijection}]
It is well-known that rational maps to curves of genus $\geq 1$ extend (see e.g.~\cite[Cor.~4.1.4]{Thesis}). Moreover, $\pi_1$ is a birational invariant \cite[Exp.~X, Cor.~3.4]{SGA1}, hence both sides of the statement only depend on the birational isomorphism class of $X$. Then Chow's lemma \cite[Thm.~5.6.1]{EGA2} and resolution of singularities \cite{Hir} reduce us to the smooth projective case.

By \boldref{Prop Beauville for H^1} and \boldref{Cor S wedge}, the union of the pullbacks $\rho^*H^1(\Gamma_g^\ell,\Z_\ell)$ for $\rho \in \Hom^\circ(\pi_1^{\et,\ell}(X),\Gamma_{\geq g_0})$ is the wedge sum
\begin{equation}
S = \bigvee_{\substack{\phi \colon X \twoheadrightarrow C\\\phi_*\mathcal O_X = \mathcal O_C}} \phi^* H^1_{\et}(C,\Z_\ell).\label{Eq S wedge}
\end{equation}
This defines a map
\[
\beta \colon \Hom^\circ\!\left(\pi_1^{\et,\ell}(X),\Gamma_{\geq g_0}^\ell\right) \rA \Mor(X,\geq\!\!g_0)\raisebox{-.5em}{$\!\!\diagup$}\raisebox{-.8em}{$\!\!\sim$},
\]
taking $\rho$ to the unique $(C,\phi)$ with $\phi_* \mathcal O_X = \mathcal O_C$ corresponding to the component of the wedge sum (\ref{Eq S wedge}) in which $\rho^* H^1(\Gamma_g^\ell,\Z_\ell)$ lands. Moreover, the fibres of $\beta$ are exactly the equivalence classes of $\sim$, showing that $\sim$ is an equivalence relation. Then $\beta$ descends to a two-sided inverse of $\alpha$.
\end{proof}

\begin{Rmk}\label{Rmk bijection alternative}
The proof shows that the surjections $\rho \colon \pi_1^{\et,\ell}(X) \twoheadrightarrow \Gamma_g^\ell$ for which $\rho^* H^1(\Gamma_g^\ell,\Z_\ell)$ is inclusionwise maximal correspond to the maximal linear subspaces of the set $S$ of \boldref{Prop Beauville for H^1}. One can use this to state \boldref{Thm bijection} in terms of maximal elements instead of equivalence classes.

However, there may be multiple surjections $\rho \colon \pi_1^{\et,\ell}(X) \twoheadrightarrow \Gamma_g^\ell$ for which the pullbacks $\rho^* H^1(\Gamma_g^\ell,\Z_\ell)$ form the same maximal subspace of $S$, so they are only maximal in a weak sense. For this reason, we chose to state \boldref{Thm bijection} in terms of equivalence classes.
\end{Rmk}

\begin{Cor}\label{Cor homotopy invariant}
Let $X$ be a smooth proper variety over an algebraically closed field $k$ of characteristic $0$, and let $g \geq 2$. Then the set of (isomorphism classes of) surjections $\phi \colon X \to C$ with $\phi_* \mathcal O_X = \mathcal O_C$ to a smooth projective curve $C$ of genus $g$ only depends on $\pi_1^{\et,\ell}(X)$. In particular, if $k = \mathbb C$, it is a homotopy invariant of $X$.
\end{Cor}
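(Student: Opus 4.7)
The plan is to deduce this as a quick corollary of \boldref{Thm bijection}. First, I would apply that bijection with $g_0 = g$; combined with \boldref{Rmk canonical representative}, which says every equivalence class in $\Mor(X, \geq\!g)/{\sim}$ contains a unique representative with $\phi_* \mathcal O_X = \mathcal O_C$, this identifies the set $\mathcal M_{\geq g}$ of isomorphism classes of surjections $\phi \colon X \to C$ with $g(C) \geq g$ and $\phi_* \mathcal O_X = \mathcal O_C$ with $\Hom^\circ(\pi_1^{\et,\ell}(X), \Gamma_{\geq g}^\ell)/{\sim}$. Since each $\Gamma_{g'}^\ell$ is a fixed universal profinite group, the right-hand side is defined purely in terms of the profinite group $\pi_1^{\et,\ell}(X)$, so the same is true of $\mathcal M_{\geq g}$.

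Next I would apply the same argument with $g_0 = g+1$ to obtain $\mathcal M_{\geq g+1}$, and extract the set of pairs with $g(C) = g$ as the set-theoretic difference $\mathcal M_{\geq g} \setminus \mathcal M_{\geq g+1}$. The key compatibility to verify is that the natural inclusion $\mathcal M_{\geq g+1} \subseteq \mathcal M_{\geq g}$ matches the obvious forgetful map $\Hom^\circ(\pi_1^{\et,\ell}(X), \Gamma_{\geq g+1}^\ell)/{\sim} \to \Hom^\circ(\pi_1^{\et,\ell}(X), \Gamma_{\geq g}^\ell)/{\sim}$ on the other side of the bijection. Well-definedness is clear (any $\rho$ witnessing the narrower equivalence also witnesses the wider one), and injectivity uses that the canonical pair has the \emph{largest} genus in its equivalence class: by \boldref{Def equiv Mor}, every pair $(C_i, \phi_i)$ in the class admits a factorization $C_i' \to C_i$ through the canonical $C_i' = C$, and by Riemann--Hurwitz such a finite cover of curves has $g(C_i) \leq g(C)$.

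Finally, for the homotopy-invariance clause when $k = \C$: by Riemann existence, $\pi_1^{\et}(X)$ is the profinite completion of $\pi_1\top(X)$, and $\pi_1^{\et,\ell}(X)$ is its maximal pro-$\ell$ quotient; both are functorially determined by the homotopy type of $X$.

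The main obstacle is the compatibility check in the second paragraph---specifically, showing that $\mathcal M_{\geq g+1} \subseteq \mathcal M_{\geq g}$ corresponds to the evident map on the group-theoretic side. This amounts to careful bookkeeping with the two equivalence relations and the Riemann--Hurwitz observation above, but it is the only step that requires genuine verification; the rest is formal once \boldref{Thm bijection} is in hand. An alternative would be to recover the genus of the canonical representative directly from the equivalence class via the rank of its linearly disjoint summand in \boldref{Cor S wedge} (which is $2g$), but the complement approach seems cleaner.
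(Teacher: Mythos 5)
Your proposal is correct and is essentially the paper's own argument: the proof there is simply ``Apply \boldref{Thm bijection} to $g_0 = g$ and $g_0 = g+1$'', i.e.\ exactly the complement trick you carry out, with your canonical-representative and forgetful-map compatibility checks being the details the paper leaves implicit. The Riemann--Hurwitz observation and the Riemann existence remark for $k=\C$ are fine and consistent with the intended reading.
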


\begin{proof}
Apply \boldref{Thm bijection} to $g_0 = g$ and $g_0 = g+1$.
\end{proof}
\vskip-\lastskip
In contrast, the original Siu--Beauville theorem (\boldref{Thm SB}) only addresses whether or not there exists a morphism $X \to C$ to a curve of some fixed genus $g \geq 2$, not how many there are.

\begin{Rmk}\label{Rmk Catanese}
A similar result was obtained by Catanese \cite[Thm.~2.25]{Cat}, which deals more generally with \emph{Albanese general type fibrations} $\phi \colon X \to Y$, i.e.~maps to K\"ahler manifolds $Y$ with $q(Y) > \dim(Y)$ such that the image of $\operatorname{alb} \colon Y \to \Alb(Y)$ has dimension $\dim(Y)$. Catanese's characterisation uses certain real subspaces of $H^1(X,\mathbb C)$ instead of the fundamental group. As such, it does not generalise well to other algebraically closed fields of characteristic $0$.

Catanese also shows that if $\phi \colon X \to C$ is a morphism from a smooth proper scheme $X$ to a smooth projective curve $C$ of genus $g \geq 2$ over a field $k$ of characteristic $0$, then the forgetful transformation
\begin{align*}
\Defo_{(\phi \colon X \to C)} &\rA \Defo_C
\end{align*}
is an isomorphism of deformation functors \cite[Rmk.~4.10]{Cat}. The proof was rediscovered by the present author \cite[Thm.~5.5.1]{Thesis}. It relies on a Kodaira type vanishing theorem of Koll\'ar \cite[Thm.~2.1]{KolVanishing}; there are various reasons the required vanishing theorem fails in positive characteristic \cite[Rmk.~4.12]{Cat}, \cite[Ex.~5.5.4]{Thesis}. See also \boldref{Q deformation invariant} below.
\end{Rmk}

\section{Lifting morphisms to curves}\label{Sec lift morphism to curve}
We apply \boldref{Thm bijection} to prove that a morphism $\phi \colon X \to C$ of smooth proper varieties to a curve $C$ of genus $\geq 2$ lifts along with any lift of $X$.

\begin{Thm}\label{Thm lift morphism to curve}
Let $R$ be a DVR of characteristic $0$ with fraction field $K$ and algebraically closed residue field $k$. Let $\mathcal X \to \Spec R$ be a smooth proper morphism, let $C$ be a smooth projective curve over $k$ of genus $g \geq 2$, and let $\phi \colon \mathcal X_0 \to C$ be a morphism with $\phi_* \mathcal O_{\mathcal X_0} = \mathcal O_C$. Then there exists a generically finite extension $R \to R'$ of DVRs, a smooth proper curve $\mathcal C'$ over $R'$, a morphism $\phi' \colon \mathcal X_{R'} \to \mathcal C'$, and a commutative diagram
\begin{equation*}
\begin{tikzcd}[column sep=.6em]
 & \mathcal X_0 \ar{ld}[swap]{\phi}\ar{rd}{\phi_0'} & \\
C \ar{rr}[swap]{F} & & \mathcal C_0'\punct{,}
\end{tikzcd}
\end{equation*}
where $F$ is purely inseparable. In particular, $F$ is a power of the relative Frobenius if $\Char k = p > 0$, and $F$ is an isomorphism if $\Char k = 0$.
\end{Thm}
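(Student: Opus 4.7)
My strategy is to transport $\phi$ to characteristic $0$ via the specialisation isomorphism and \boldref{Thm bijection}, descend back to $R'$ after a finite extension, and then compare with $\phi$ on the special fibre. Fix a prime $\ell \neq \Char k$. The assumption $\phi_*\mathcal O_{\mathcal X_0} = \mathcal O_C$ gives a surjection $\phi_* \colon \pi_1^{\et,\ell}(\mathcal X_0) \twoheadrightarrow \Gamma_g^\ell$, which transports via the specialisation isomorphism $\pi_1^{\et,\ell}(\mathcal X_{\bar K}) \cong \pi_1^{\et,\ell}(\mathcal X_0)$ to a surjection $\rho \colon \pi_1^{\et,\ell}(\mathcal X_{\bar K}) \twoheadrightarrow \Gamma_g^\ell$. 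Applying \boldref{Thm bijection} to the equivalence class of $\rho$ yields a canonical pair $(\phi'_{\bar K}, C'_{\bar K}) \in \Mor(\mathcal X_{\bar K}, \geq 2)$ with $\phi'_{\bar K,*}\mathcal O = \mathcal O$, for which $\rho^*H^1(\Gamma_g^\ell, \mathbb Z_\ell) \subseteq (\phi'_{\bar K})^* H^1_{\et}(C'_{\bar K}, \mathbb Z_\ell)$; in particular $g' := g(C'_{\bar K}) \geq g$.

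Next I descend $(\phi'_{\bar K}, C'_{\bar K})$ to a finite extension $K'/K$ (using the finiteness of $\Mor(\mathcal X_{\bar K}, \geq 2)$ by de Franchis), and spread out over an extension of DVRs $R \to R'$: after possibly enlarging $R'$, stable reduction provides a smooth proper model $\mathcal C' / R'$ of $C'_{K'}$, and the morphism extends to $\phi' \colon \mathcal X_{R'} \to \mathcal C'$ by taking the closure of the graph in $\mathcal X_{R'} \times_{R'} \mathcal C'$ and invoking Zariski's Main Theorem on the smooth total space. To compare $\phi$ with $\phi'_0 \colon \mathcal X_0 \to \mathcal C'_0$, I use functoriality of specialisation to identify $\phi^* H^1_{\et}(C, \mathbb Z_\ell) \subseteq (\phi'_0)^* H^1_{\et}(\mathcal C'_0, \mathbb Z_\ell)$ inside $H^1_{\et}(\mathcal X_0, \mathbb Z_\ell)$, nonzero since $g \geq 2$. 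By \boldref{Lem dominant to product} (valid in characteristic $p$ for $\ell \neq p$), we obtain $(\phi, C) \sim (\phi'_0, \mathcal C'_0)$ in $\Mor(\mathcal X_0, \geq 2)$; the Stein-factorisation description of $\sim$ combined with $\phi_*\mathcal O = \mathcal O$ then produces a finite morphism $F \colon C \to \mathcal C'_0$ with $\phi'_0 = F \circ \phi$.

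Finally, I show $F$ is purely inseparable via a rank count. The injectivity of $F^*$ on $H^1_{\et}(-, \mathbb Q_\ell)$ forces $g' \leq g$, so combined with $g' \geq g$ we conclude $g = g' \geq 2$. Writing $F = F_s \circ F^n$ in its separable/inseparable factorisation with $F^n$ a power of relative Frobenius, $F_s$ becomes a finite separable morphism between smooth projective curves of the same genus $g \geq 2$, so Riemann--Hurwitz forces $F_s$ to be an isomorphism. Hence $F$ is a power of relative Frobenius if $\Char k > 0$, and an isomorphism if $\Char k = 0$. The hardest step, in my view, will be the descent-and-extension: ensuring that $C'_{\bar K}$ has a smooth proper model after a finite base change and that the morphism extends to all of $\mathcal X_{R'}$; the final comparison and purely-inseparable conclusion then follow cleanly from \boldref{Lem dominant to product} and the rank count.
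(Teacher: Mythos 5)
The central gap is your claim that ``after possibly enlarging $R'$, stable reduction provides a smooth proper model $\mathcal C'/R'$ of $C'_{K'}$.'' This is false in general: stable reduction only produces a stable (nodal) model after a finite extension, and a curve of genus $\geq 2$ need not have potentially good reduction, so no enlargement of $R'$ will yield a smooth proper model unless good reduction is actually proved. This is precisely the crux of the mixed-characteristic argument. The paper's proof establishes it as follows: since $\mathcal X_{K'}$ has good reduction, the $\Gal(\bar K/K')$-action on $\pi_1^{\et,\ell}(\mathcal X_{\bar K})$ is unramified; the surjection $\phi'_* \colon \pi_1^{\et,\ell}(\mathcal X_{\bar K}) \twoheadrightarrow \pi_1^{\et,\ell}(\bar C')$ is Galois-equivariant, so the action on $\pi_1^{\et,\ell}(\bar C')$ is unramified as well; and then Oda's N\'eron--Ogg--Shafarevich theorem for curves gives good reduction of $C'$. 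Note that this genuinely needs the (nonabelian) pro-$\ell$ fundamental group: unramifiedness of $H^1$ alone would only give good reduction of the Jacobian, which for $g \geq 2$ does not imply good reduction of the curve (e.g.\ compact-type degenerations). So this step is a missing idea, not a routine reduction.

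A secondary weakness: extending $\phi'$ to $\mathcal X_{R'}$ ``by taking the closure of the graph and invoking Zariski's Main Theorem'' is not a proof, since ZMT applies only once you know the projection from the graph closure to $\mathcal X_{R'}$ is quasi-finite, which is exactly what must be shown; the paper instead extends $\phi'$ using the N\'eron mapping property of the abelian scheme $\Alb_{\mathcal C'/R'}$ (available because $g(C') \geq 1$). Your endgame, on the other hand, is correct and takes a mildly different route from the paper: you deduce $(\phi,C) \sim (\phi'_0,\mathcal C'_0)$ from \boldref{Lem dominant to product} and Stein factorisation to produce $F$, and prove pure inseparability by the rank count $g(\mathcal C'_0) = g$ together with Riemann--Hurwitz, whereas the paper notes that $\phi'_*\mathcal O_{\mathcal X_{K'}} = \mathcal O_{C'}$ forces $\phi'$ to have geometrically connected fibres, so the finite part of the Stein factorisation of $\phi'_0$ is radicial. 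Either endgame works, but as it stands the good-reduction step leaves the proposal with a genuine gap.
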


\begin{Rmk}
That is, if $X$ can be lifted, then so can any morphism $\phi \colon X \to C$ with $\phi_* \mathcal O_X = \mathcal O_C$ to a curve $C$ of genus $g \geq 2$, up to a generically finite extension $R \to R'$ and a purely inseparable morphism $C \to \mathcal C_0' = C^{(p^{-n})}$.
\end{Rmk}

\begin{Rmk}
If $\Char k = 0$, i.e.~$R$ is a DVR of equicharacteristic $0$, then we may in fact choose $R = R'$ by Catanese's deformation theoretic result (\boldref{Rmk Catanese}). We do not know if the extension $R \to R'$ and the purely inseparable morphism $C \to \mathcal C_0'$ are actually needed in mixed characteristic, nor whether a variant of the result is true in pure characteristic $p > 0$. In fact, as far as we know the following is still open (see also \cite[Rmk.~4.12]{Cat}):
\end{Rmk}

\begin{Question}\label{Q deformation invariant}
Let $X$ and $X'$ be deformation equivalent smooth proper varieties over an algebraically closed field $k$ of characteristic $p > 0$. If $X$ admits a dominant morphism to a curve of genus $g \geq 2$, then does $X'$ as well?
\end{Question}

\begin{proof}[Proof of \boldref{Thm lift morphism to curve}]
The morphism $\phi \colon \mathcal X_0 \to C$ induces a surjection
\[
\rho \colon \pi_1^{\et,\ell}(\mathcal X_0) \twoheadrightarrow \Gamma_g^\ell.
\]
By \cite[Exp.~X,~Cor.~3.9]{SGA1}, we have an isomorphism $\pi_1^{\et,\ell}(\mathcal X_0) \cong \pi_1^{\et,\ell}(\mathcal X_{\bar K})$, hence we may view $\rho$ as a map $\pi_1^{\et,\ell}(\mathcal X_{\bar K}) \twoheadrightarrow \Gamma_g^\ell$. By \boldref{Prop Beauville for H^1} and \boldref{Cor S wedge} there exists a unique morphism $\phi' \colon \mathcal X_{\bar K} \to \bar C'$ with $\phi_*' \mathcal O_{\mathcal X_{\bar K}} = \mathcal O_{\bar C'}$ to a smooth projective curve $\bar C'$ over $\bar K$ of genus $g(\bar C') \geq 2$ such that
\begin{equation}
\rho^*H^1(\Gamma_g^\ell,\Z_\ell) \subseteq \phi'^* H^1_{\et}(\bar C',\Z_\ell).\label{Eq inclusion}
\end{equation}
There exists a finite extension $K'$ of $K$ and a smooth projective curve $C'$ over $K'$ such that $C'_{\bar K'} \cong \bar C'$. Extending $K'$ if necessary, we may assume that $C'$ has a rational point and that the morphism $\phi' \colon \mathcal X_{\bar K} \to \bar C'$ is defined over $K'$. We then have a $\Gal(\bar K/K')$-equivariant surjection
\begin{equation}\label{Eq surjection pi_1}
\phi_*' \colon \pi_1^{\et,\ell}(\mathcal X_{\bar K}) \twoheadrightarrow \pi_1^{\et,\ell}(\bar C').
\end{equation}
Let $R'$ be the localisation of the integral closure of $R$ in $K'$ at any prime above $\mathfrak m_R$. Then the $\Gal(\bar K/K')$-action on $\pi_1^{\et,\ell}(\mathcal X_{\bar K})$ is unramified since $\mathcal X_{K'}$ has good reduction, hence by the surjection (\ref{Eq surjection pi_1}) the same is true for the $\Gal(\bar K/K')$-action on $\pi_1^{\et,\ell}(\bar C')$.
By Takayuki Oda's ``N\'eron--Ogg--Shafarevich for curves'' \cite[Thm.~3.2]{Oda}\footnote{Oda's paper only states the result over a number field, but the methods work over any DVR. See e.g.~\cite[Thm.~0.8]{Tam} for a proof over an arbitrary DVR.}, this implies that $C'$ has good reduction. Thus, there exists a smooth proper curve $\mathcal C' \to \Spec R'$ with generic fibre $C'$. 

Since $g(C') \geq 1$, the N\'eron mapping property \cite[Cor.~4.4.4]{BLR} for the abelian scheme $\Alb_{\mathcal C'/R'}$ implies that the morphism $\phi' \colon \mathcal X_{K'} \to C'$ extends uniquely to a morphism $\phi' \colon \mathcal X_{R'} \to \mathcal C'$. Since formation of $\phi_*'$ commutes with flat base change, we get $\phi_*' \mathcal O_{\mathcal X_{K'}} = \mathcal O_{\mathcal C'}$, hence $\phi'$ has geometrically connected fibres \cite[Cor.~4.3.2]{EGA3I}. Now consider the Stein factorisation of its special fibre $\phi_0'$:
\[
\mathcal X_0 \stackrel f\rA \widetilde C' \stackrel F\rA \mathcal C_0'.
\]
Since $\phi_0'$ has geometrically connected fibres, the finite part $F$ is radicial. Since $\mathcal C_0'$ and $\widetilde C'$ are smooth projective curves over an algebraically closed field, this implies $F \colon \mathcal C_0'^{(p^n)} \to \mathcal C_0'$ is a power $\Frob^n$ of the relative Frobenius. Finally, the specialisation isomorphism \cite[Exp.~X,~Cor.~3.9]{SGA1} and topological invariance of the \'etale site \cite[Exp.~IX,~Thm.~4.10]{SGA1} give an isomorphism
\[
\pi_1^{\et,\ell}(\mathcal C_{\bar K}') \cong \pi_1^{\et,\ell}(\widetilde C').
\]
Under this isomorphism and the comparison $\pi_1^{\et,\ell}(\mathcal X_{\bar K}) \cong \pi_1^{\et,\ell}(\mathcal X_0)$, the maps $f_* \colon \pi_1^{\et,\ell}(\mathcal X_0) \to \pi_1^{\et,\ell}(\widetilde C')$ and $\phi_*' \colon \pi_1^{\et,\ell}(\mathcal X_{\bar K}) \to \pi_1^{\et,\ell}(\mathcal C_{\bar K}')$ agree. Translating (\ref{Eq inclusion}) to this notation gives
\[
\phi^* H^1_{\et}(C,\Z_\ell) \subseteq f^*H^1_{\et}(\widetilde C',\Z_\ell),
\]
so \boldref{Lem dominant to product} $\ref{Item H^1} \Ra \ref{Item sim Mor}$ forces $(\phi,C) \cong (f,\widetilde C')$.
\end{proof}

\section{Line bundles on products of curves}\label{Sec line bundles}
We will give a criterion for a line bundle on a product $\prod_i C_i$ of curves in characteristic $p > 0$ that implies it cannot be lifted to $\prod_i \mathcal C_i$ for \emph{any} lifts $\mathcal C_i$ of the curves $C_i$; see \boldref{Prop no multiple lifts} below. We will give an example of this obstruction in \boldref{Sec Rosati}. The main definitions are given in \boldref{Def components of line bundle} and \boldref{Def corresponds to isogeny factor}. It is based on the following lemma.

\begin{Lemma}\label{Lem Pic product general}
Let $S$ be a scheme, and let $X_i \to S$ for $i \in \{1,\ldots,r\}$ be flat proper morphisms of finite presentation for which the Picard functor $\PIC_{X_i/S}$ and the Albanese $\ALB_{X_i/S}$ are representable (as scheme or algebraic space). Write $X \to S$ for the fibre product $X_1 \times_S \ldots \times_S X_r$. Then any choice of sections $\sigma_i$ of $f_i$ induces an isomorphism
\[
\PIC_{X/S} \cong \prod_{i = 1}^r \PIC_{X_i/S} \times \prod_{i < j} \HOM_S(\ALB_{X_i/S}, \PIC^0_{X_j/S}).
\] 
\end{Lemma}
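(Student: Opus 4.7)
I would prove the lemma by induction on $r$, with the case $r=2$ carrying all the content. The case $r=1$ is trivial, so assume $r \geq 2$ and set $X' = X_2 \times_S \cdots \times_S X_r$ with the obvious section $\sigma' = (\sigma_2,\ldots,\sigma_r)$. Assuming the lemma for $r-1$, it suffices to establish the case $r=2$ and combine it with the standard compatibilities $\ALB_{X'/S} \cong \prod_{i \geq 2} \ALB_{X_i/S}$ and $\PIC^0_{X'/S} \cong \prod_{i \geq 2} \PIC^0_{X_i/S}$ (which follow from the universal properties after applying the inductive hypothesis to $X'$, noting that trivial line bundles on $X'$ restrict trivially along each slice), together with the bilinearity $\HOM_S(A, B_1 \times B_2) \cong \HOM_S(A,B_1) \times \HOM_S(A,B_2)$.

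For the heart of the matter, assume $r=2$ and write $Y = X_1, Z = X_2$, $\sigma_Y, \sigma_Z$ for the sections. The plan is to construct mutually inverse isomorphisms of functors on $\Sch_S$. Given $\mathscr L \in \PIC_{Y \times_S Z/S}(T)$, define $\mathscr L_Y := (\id_Y \times \sigma_Z)^* \mathscr L \in \PIC_{Y/S}(T)$ and $\mathscr L_Z := (\sigma_Y \times \id_Z)^* \mathscr L \in \PIC_{Z/S}(T)$, and set
\[
\mathscr M := \mathscr L \otimes p_Y^* \mathscr L_Y^{-1} \otimes p_Z^* \mathscr L_Z^{-1}.
\]
Then $\mathscr M$ is trivialised along both $\sigma_Y \times \id_Z$ and $\id_Y \times \sigma_Z$ (this is where the seesaw-type cancellation happens and where one uses that $\PIC_{X/S}$ is a functor of line bundles modulo pullback from $S$, so the slice trivialisations indeed eliminate the chosen splittings). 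The second rigidification places $\mathscr M$ in the $\PIC^0$-part with respect to $Z$, so restriction of $\mathscr M$ along the fibres of $p_Y$ defines a morphism $Y \to \PIC^0_{Z/S}$ sending $\sigma_Y$ to the origin. By the universal property of the Albanese scheme, this factors uniquely through a homomorphism $\alpha(\mathscr L) \colon \ALB_{Y/S} \to \PIC^0_{Z/S}$. Thus we obtain a natural transformation
\[
\Phi \colon \PIC_{Y \times Z/S} \rA \PIC_{Y/S} \times \PIC_{Z/S} \times \HOM_S(\ALB_{Y/S}, \PIC^0_{Z/S}),\quad \mathscr L \mapsto (\mathscr L_Y, \mathscr L_Z, \alpha(\mathscr L)).
\]

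For the inverse, given a homomorphism $\beta \colon \ALB_{Y/S} \to \PIC^0_{Z/S}$, compose with $Y \to \ALB_{Y/S}$ to obtain a morphism $Y \to \PIC^0_{Z/S}$, which classifies (after rigidifying along $\sigma_Z$) a line bundle $\mathscr N_\beta$ on $Y \times_S Z$ trivialised along both $\sigma_Y \times \id_Z$ and $\id_Y \times \sigma_Z$. The assignment $(\mathscr L_Y, \mathscr L_Z, \beta) \mapsto p_Y^* \mathscr L_Y \otimes p_Z^* \mathscr L_Z \otimes \mathscr N_\beta$ then defines $\Psi$. Verifying $\Phi \circ \Psi = \id$ is direct from the trivialisations, while $\Psi \circ \Phi = \id$ reduces to the uniqueness part of the universal property of the Albanese (a line bundle on $Y \times_S Z$ rigidified along both slices is recovered from the associated morphism $\ALB_{Y/S} \to \PIC^0_{Z/S}$).

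The main obstacle is the universal property argument: one needs to know both that $\ALB_{Y/S}$ corepresents morphisms to abelian schemes sending $\sigma_Y$ to $0$, and that the $T$-points of $\PIC^0_{Z/S}$ are genuinely line bundles on $Z_T$ of degree $0$ rigidified along $\sigma_Z$. Both are standard but need to be invoked carefully relative to the base $S$, which is why the hypothesis includes representability of both $\PIC_{X_i/S}$ and $\ALB_{X_i/S}$. Given those, the seesaw-style decomposition goes through for arbitrary $T \to S$, yielding the isomorphism of functors (hence of representing objects when they exist).
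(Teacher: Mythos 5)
Your proposal is correct and follows essentially the same route as the paper's own (sketched) proof: reduce to $r=2$ by induction, use the sections to split off $\PIC_{X_1/S}\times\PIC_{X_2/S}$, identify line bundles trivialised along the coordinate cross with morphisms $X_1\to\PIC^0_{X_2/S}$ killing $\sigma_1$ via the rigidified Picard functor, and factor through the Albanese. The only difference is presentational: you write out the explicit inverse and the product compatibilities of $\ALB$ and $\PIC^0$, where the paper phrases the same decomposition as a section of the external tensor product map and identifies its kernel.
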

\vskip-\lastskip
See \cite[TDTE~VI,~Thm~3.3(iii)]{FGA} for the definition and main existence theorem of the Albanese. We include a sketch of the proof of this well-known lemma because we want to refer to the argument later. A detailed discussion can be found in \cite[\S 4.4]{Thesis}.

\begin{proof}[Proof of Lemma (sketch).]
When $r = 2$ the sections $\sigma_i$ induce a section
\begin{align*}
\PIC_{X/S} &\to \PIC_{X_1/S} \underset S\times \PIC_{X_2/S} \\
\mathscr L&\mapsto (\sigma_2^*\mathscr L,\sigma_1^*\mathscr L)
\end{align*}
to the natural external tensor product map. The kernel consists of line bundles $\mathscr L$ on $X$ trivial along the coordinate axes $\sigma_1 \times X_2$ and $X_1 \times \sigma_2$. The trivialisation along $X_1 \times \sigma_2$ (viewed as a rigidificator for the Picard functor $\PIC_{X_2/S}$) shows that this data corresponds to a morphism  
\[
\phi \colon X_1 \to \PIC_{X_2/S}.
\]
The trivialisation along $\sigma_1 \times X_2$ shows that $\phi(\sigma_1) = 0$, hence $\phi$ lands inside $\PIC^0_{X_2/S}$. The Albanese property shows that $\phi$ factors uniquely through
\[
\ALB_{X_1/S} \to \PIC^0_{X_2/S}.
\]
This proves the result for $r = 2$, and the general case follows by induction.
\end{proof}

\begin{Rmk}\label{Rmk change i and j}
The choice to use $\HOM(\ALB_i,\PIC^0_j) = \HOM(\ALB_{X_i/S},\PIC^0_{X_j/S})$ instead of the version $\HOM(\ALB_j,\PIC^0_i)$ with $i$ and $j$ swapped is arbitrary. If we use the same sections $\sigma_i$, then replacing $\HOM(\ALB_i,\PIC^0_j)$ by $\HOM(\ALB_j,\PIC^0_i)$ takes the map $\phi \colon \ALB_i \to \PIC^0_j$ to its transpose $\phi\T \colon \ALB_j \to \PIC^0_i$.

Indeed, by \boldref{Lem Pic product general} applied to both $X_i \times_S X_j$ and $\ALB_i \times_S \ALB_j$, the Albanese map $X_i \times_S X_j \to \ALB_i \times_S \ALB_j$ induces an isomorphism on the $\HOM$ factor of the lemma (line bundles trivialised along a coordinate cross). Hence, we may reduce to the case of abelian schemes, where it follows from the definition of the transpose.
\end{Rmk}

\begin{Rmk}\label{Rmk change sections}
The choice of sections $\sigma_i$ of $X_i \to S$ does not affect the projection $\PIC_{X/S} \to \prod_{i < j} \HOM(\ALB_{X_i/S},\PIC^0_{X_j/S})$. Indeed, we may reduce to the case $r = 2$. Then the map $X_i \to \PIC^0_{X_j/S}$ is given by $x_i \mapsto \mathscr L_{x_i \times_S X_j} \otimes \mathscr L_{\sigma_i \times_S X_j}^{-1}$, which visibly does not depend on $\sigma_j$.

For dependence on $\sigma_i$, use \boldref{Rmk change i and j} to swap the roles of $i$ and $j$. We can also argue directly: changing $\sigma_i$ gives maps $\ALB_{X_i/S} \to \PIC^0_{X_j/S}$ that differ by at most a translation, so they have to agree since they are morphisms of abelian varieties. (See also \cite[Lem.~4.4.5]{Thesis} for an alternative point of view and additional details.)
\end{Rmk}

For the rest of this section, we will work in the following setup.

\begin{Setup}\label{Setup Picard group of a product of curves}
Let $k$ be a field, let $r \in \Z_{>0}$, and let $C_1,\ldots,C_r$ be smooth projective curves over $k$ with $C_i(k) \neq \varnothing$. Let $X = \prod_i C_i$ be their product. The principal polarisation from the theta divisor induces an isomorphism $\PIC_{C_i/k}^0 \cong \ALB_{C_i/k}$, and we will denote both by $J_i$.
\end{Setup}

\begin{Cor}\label{Cor Pic product curves}
The choice of rational points $c_i \in C_i(k)$ induces an isomorphism
\[
\Pic(X) \cong \prod_{i = 1}^r \Pic(C_i) \times \prod_{i < j} \Hom_k(J_i, J_j).\]
The projection $\Pic(X) \to \prod_{i < j} \Hom_k(J_i, J_j)$ does not depend on the choice of rational points $c_i \in C_i(k)$.
\end{Cor}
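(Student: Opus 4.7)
The plan is to deduce this directly from \boldref{Lem Pic product general} applied with $S = \Spec k$ and with the sections $\sigma_i \colon \Spec k \to C_i$ given by the chosen rational points $c_i \in C_i(k)$. For a smooth projective curve $C_i$ over $k$, both $\PIC_{C_i/k}$ and $\ALB_{C_i/k}$ are representable (by standard results on the Picard and Albanese of curves), and the principal polarisation from the theta divisor identifies $\ALB_{C_i/k} = \PIC^0_{C_i/k} = J_i$ as in \boldref{Setup Picard group of a product of curves}. Taking $k$-points of the isomorphism of group schemes provided by the lemma then gives
\[
\Pic(X) \cong \prod_{i=1}^r \Pic(C_i) \times \prod_{i<j} \Hom_k(J_i,J_j),
\]
since $\HOM_k(J_i,J_j)(k) = \Hom_k(J_i,J_j)$ for $J_i, J_j$ abelian varieties over $k$.

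For the second assertion, observe that the projection $\Pic(X) \to \prod_{i<j} \Hom_k(J_i,J_j)$ is exactly the map sending a line bundle $\mathscr L$ to the collection of homomorphisms $J_i \to J_j$ factoring the maps $C_i \to \PIC^0_{C_j/k}$ built from $\mathscr L$ by restriction to the slices $C_i \times c_j$ (normalised along $c_i \times C_j$). By \boldref{Rmk change sections}, this map does not depend on the choice of the sections $c_i$: changing $c_j$ has no effect on this slice normalisation, while changing $c_i$ modifies the map $\ALB_{C_i/k} \to \PIC^0_{C_j/k}$ only by a translation, and two homomorphisms of abelian varieties that differ by a translation must coincide.

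No step here is substantial: everything is a direct specialisation of the preceding lemma and remark. The only minor point to mention is the identification $\HOM_k(J_i,J_j)(k) = \Hom_k(J_i,J_j)$, which is immediate from the definition of $\HOM_k$ as the scheme of homomorphisms.
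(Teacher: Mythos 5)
Your argument is correct and is essentially identical to the paper's proof, which simply deduces the corollary from \boldref{Lem Pic product general} (applied over $S=\Spec k$ with the sections $c_i$) together with \boldref{Rmk change sections} for the independence of the projection. The extra details you spell out (representability of $\PIC_{C_i/k}$ and $\ALB_{C_i/k}$, the identification $\HOM_k(J_i,J_j)(k)=\Hom_k(J_i,J_j)$, and the translation argument) are exactly the content the paper leaves implicit or has already recorded in the cited remark.
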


\begin{proof}
Immediate from \boldref{Lem Pic product general} and \boldref{Rmk change sections}.
\end{proof}

\begin{Def}\label{Def components of line bundle}
Given a line bundle $\mathscr L \in \Pic(X)$, we write
\[
\mathscr L = \left(\big(\mathscr L_i\big)_i, \big(\phi_{ji}\big)_{i < j}\right) \in \prod_{i=1}^r \Pic(C_i) \times \prod_{i < j} \Hom(J_i,J_j).
\]
For each $i \in \{1,\ldots,r\}$, write $E_i(\mathscr L) \subseteq \End^\circ(J_i)$ for the $\Q$-subrng (equivalently, $\Q$-subspace) generated by the compositions
\[
\phi_{i_1\ldots i_m} = \phi_{i_1i_2} \circ \ldots \circ \phi_{i_{m-1}i_m}
\]
for any $m \geq 2$ and $i_1,\ldots,i_m \in \{1,\ldots,r\}$ with $i_1 = i_m = i$. 
Here we write $\phi_{ij} = \phi_{ji}\T$ if $i < j$; see \boldref{Rmk change i and j}. By \boldref{Rmk change sections}, the $E_i(\mathscr L)$ do not depend on the choice of rational points $c_i \in C_i(k)$ (but the $\mathscr L_i$ do).
\end{Def}

\begin{Rmk}
The reason we only adjoin $\phi_{i_1\ldots i_m}$ for $m \geq 2$ and do not include the empty composition $\phi_{\varnothing} = 1$ is that $1$ is not preserved under pullback by finite morphisms $C'_i \to C_i$, so that \boldref{Lem pullback and composition} below would no longer be true.
\end{Rmk}

\begin{Picture}\label{Pic noncommutative}
The $\phi_{ji}$ sit in the following (\emph{non-commutative}) diagram, drawn when $r = 3$ and $r = 4$:
\begin{equation*}
\begin{tikzcd}[column sep=.5em]
 & J_1 \ar{ld}[swap]{\phi_{21}}\ar{rd}{\phi_{31}} & &\ &\ & J_1 \ar{rr}{\phi_{21}}\ar{d}[swap]{\phi_{31}}\ar{rrd} &\ & J_2 \ar{d}{\phi_{42}}\ar{lld} \\
J_2 \ar{rr}[swap]{\phi_{32}} & & J_3\punct{,} & & & J_3 \ar{rr}[swap]{\phi_{43}} & & J_4\punct{.}
\end{tikzcd}
\end{equation*}
The compositions $\phi_{i_1\ldots i_m}$ with $i_1 = i_m = i$ correspond to loops based at $i$, where an arrow travelled in reverse direction introduces a transpose $(-)\T$.
\end{Picture}

The $E_i(\mathscr L)$ are introduced because they behave well with respect to pullback under finite covers $C'_i \to C_i$ (\boldref{Lem pullback and composition}), as well as with respect to specialisation (\boldref{Lem specialisation and composition}). In particular, the $E_i(\mathscr L)$ provide an obstruction for a line bundle $\mathscr L$ on $\prod C_i$ to lift to $\prod \mathcal C_i$ for \emph{any} lifts  $\mathcal C_i$ of $C_i$ (\boldref{Prop no multiple lifts}).


\begin{Def}\label{Def corresponds to isogeny factor}
Let $C_i$ and $X$ be as in \boldref{Setup Picard group of a product of curves}, and let $\mathscr L \in \Pic(X)$. Then $\mathscr L$ \emph{corresponds to an isogeny factor $A$ of $J_i$} if there exists an isogeny factor
\begin{equation}
\begin{tikzcd}[every arrow/.append style={shift left}]
J_i \ar{r}{\pi} & A \ar{l}{\iota}
\end{tikzcd}\label{Dia isogeny factor}
\end{equation}
such that $E_i(\mathscr L) = \iota \End^\circ(A) \pi$. Here, $\pi$ is a surjective homomorphism and $\iota$ is an element of $\Hom^\circ(A,J_i)$ such that $\pi \iota = \id$. 

Equivalently, $E_i(\mathscr L) = p \End^\circ(J_i)p$ for some idempotent $p \in \End^\circ(J_i)$. Indeed, isogeny factors as in (\ref{Dia isogeny factor}) correspond to idempotents $p \in \End^\circ(J_i)$ by setting $p = \iota \pi$, and under this correspondence we have
$$\iota \End^\circ(A) \pi = p \End^\circ(J_i) p.$$
If $E_i(\mathscr L) = \End^\circ(J_i)$, then we say that $\mathscr L$ \emph{generates all endomorphisms of $J_i$}. This is a special case of the above, where we take $A = J_i$, or equivalently $p = \id$.
\end{Def}


\begin{Lemma}\label{Lem pullback and composition}
Let $C_i$ and $X$ be as in \boldref{Setup Picard group of a product of curves}, and let $C'_i$ and $X'$ satisfy the same assumptions. For each $i$, let $f_i \colon C'_i \to C_i$ be a finite morphism, and denote their product by $f \colon X' \to X$. Let $\mathscr L \in \Pic(X)$, and let $\mathscr L' = f^* \mathscr L$. Then for all $i$, we have
\[
E_i(\mathscr L') = f_i^*E_i(\mathscr L)f_{i,*}.
\]
If $\mathscr L$ corresponds to an isogeny factor $A$ of $J_i$, then $\mathscr L'$ corresponds to the isogeny factor $A$ of $J'_i$. The converse holds if $g(C'_i) = g(C_i)$.
\end{Lemma}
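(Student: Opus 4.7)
The proof reduces to unpacking the functoriality of the decomposition of \boldref{Cor Pic product curves} under pullback, followed by formal manipulations with idempotents in $\End^\circ(J_i)$. First, the plan is to describe how $f^*\colon\Pic(X)\to\Pic(X')$ acts on the cross-terms $\phi_{ji}$. Following the proof sketch of \boldref{Lem Pic product general}, the map $\phi_{ji}\colon J_i\to J_j$ for $i<j$ is induced (via the Albanese) from the morphism $C_i\to\Pic^0(C_j)$ that restricts $\mathscr L$ to the vertical fibres over points of $C_i$. For the pulled-back bundle $f^*\mathscr L$ this restriction becomes $f_j^*$ applied to the restriction of $\mathscr L$ at $f_i(x'_i)$, and passing through the Albanese yields
\[
\phi'_{ji} = f_j^* \circ \phi_{ji} \circ f_{i,*}.
\]
The analogous formula for $i>j$ follows by transposing and invoking the duality $f_{i,*}\T = f_i^*$ from \boldref{Rmk change i and j}.

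Given this, the first assertion becomes a telescoping calculation. For any word $i_1,\ldots,i_m$ with $m\geq 2$, the interior cancellations $f_{i_k,*}\circ f_{i_k}^* = \deg(f_{i_k})\cdot\id_{J_{i_k}}$ collapse to
\[
\phi'_{i_1\ldots i_m} = \Bigl(\prod_{k=2}^{m-1}\deg(f_{i_k})\Bigr)\cdot f_{i_1}^*\circ\phi_{i_1\ldots i_m}\circ f_{i_m,*}.
\]
Since the degrees are invertible in $\Q$, specialising to $i_1=i_m=i$ yields the equality $E_i(\mathscr L')=f_i^*\,E_i(\mathscr L)\,f_{i,*}$ of $\Q$-subspaces of $\End^\circ(J'_i)$.

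For the second assertion, write $E_i(\mathscr L)=p\,\End^\circ(J_i)\,p$ for an idempotent $p$ whose image is isogenous to $A$. The plan is to take $q=\tfrac{1}{\deg f_i}\,f_i^*\circ p\circ f_{i,*}\in\End^\circ(J'_i)$; from $f_{i,*}f_i^*=\deg(f_i)\,\id$ one checks directly that $q^2=q$, $qf_i^*=f_i^*p$, and $f_{i,*}q=pf_{i,*}$, from which a two-sided sandwich argument gives $q\,\End^\circ(J'_i)\,q = f_i^*\,E_i(\mathscr L)\,f_{i,*} = E_i(\mathscr L')$. Injectivity of $f_i^*\colon J_i\to J'_i$ then identifies $\im(q)$ up to isogeny with $\im(p)$, hence with $A$. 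For the converse, equal genera force $f_i^*$ to be an isogeny (it is injective between abelian varieties of equal dimension), so $\tfrac{1}{\deg f_i}f_{i,*}$ is its two-sided inverse in $\Hom^\circ$; conjugation then defines a $\Q$-algebra isomorphism $\End^\circ(J_i)\cong\End^\circ(J'_i)$ carrying $E_i(\mathscr L)$ to $E_i(\mathscr L')$, and an idempotent presentation of the latter pulls back to one of the former.

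The main hurdle is the bookkeeping in the first step, where the transpose conventions of \boldref{Def components of line bundle} (with $\phi_{ij}=\phi_{ji}\T$ for $i<j$) have to be tracked against the duality between Picard pullback and Albanese pushforward under the principal polarization. Once the single identity $\phi'_{ji}=f_j^*\phi_{ji}f_{i,*}$ is in place, the remainder is essentially a formal computation with idempotents in the $\Q$-algebras $\End^\circ(J_i)$.
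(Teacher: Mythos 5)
Your proposal is correct and follows essentially the same route as the paper: derive $\phi'_{ji}=f_j^*\phi_{ji}f_{i,*}$ from the factorisation of $C'_i\to J'_j$ through $C_i\to J_j$ in the proof of \boldref{Lem Pic product general}, telescope using $f_{i,*}f_i^*=\deg(f_i)\cdot\id$, and take $\Q$-spans. Your explicit idempotent $q=\tfrac{1}{\deg f_i}f_i^*pf_{i,*}$ is just a spelled-out version of the paper's observation that $(\tfrac{1}{\deg f_i}f_i^*,f_{i,*})$ realises $J_i$ as an isogeny factor of $J'_i$ (an isogeny when the genera agree), so the arguments coincide.
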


\begin{proof}
Let $c'_i \in C'_i(k)$ be rational points, and let $c_i \in C_i(k)$ be their images. Let $\mathscr L_i$ ($\mathscr L'_i$) and $\phi_{ji}$ ($\phi'_{ji}$) denote the components of $\mathscr L$ ($\mathscr L'$) as in \boldref{Def components of line bundle}, with respect to the sections $c_i$ and $c'_i$. 
The map $C'_i \to J'_j$ of the proof of \boldref{Lem Pic product general} factors as $C'_i \to C_i \to J_j \to J'_j$, so on the Albanese we get
\[
\phi'_{ji} = f_j^* \phi_{ji} f_{i,*}.
\]
Since $f_{i,*}f_i^* \colon J_i \to J_i$ is multiplication by $\deg(f_i)$, we deduce that
\[
\phi'_{i_1\ldots i_m} = \deg(f_{i_2}) \cdots \deg(f_{i_{m-1}}) \cdot f_{i_1}^* \phi_{i_1\ldots i_m} f_{i_m,*}.
\]
Taking $\Q$-vector spaces spanned by these elements proves the first statement. For the final statements, note that the pair $(\iota,\pi) = (\tfrac{1}{\deg(f_i)}f_i^*,f_{i,*})$ as in (\ref{Dia isogeny factor}) realises $J_i$ as an isogeny factor of $J'_i$, and this is an isogeny if $g(C'_i) = g(C_i)$.
\end{proof}
\vskip-\lastskip
Next, we look at how the $E_i(\mathscr L)$ interact with specialisation of endomorphisms.

\begin{Def}\label{Def specialisation}
If $R$ is a DVR with fraction field $K$ and residue field $k$, $S = \Spec R$ is its spectrum, and $T$ is an $S$-scheme satisfying the valuative criterion of properness, then we get a specialisation map
\[
\operatorname{sp} \colon T(K) \cong T(R) \to T(k).
\]
In particular, we may apply this to $T = \PIC_{\mathcal X/S}$ for $\mathcal X \to S$ a smooth proper $S$-scheme with geometrically integral fibres, or to $T = \HOM_S(\mathcal A,\mathcal B)$ where $\mathcal A$ and $\mathcal B$ are abelian schemes over $S$. In the latter case, the specialisation map is an injective group homomorphism (see e.g.~\cite[Cor.~4.3.4]{Thesis}).
\end{Def}


\begin{Lemma}\label{Lem specialisation and composition}
Let $R$ be a DVR, and let $\mathcal C_i$ be smooth projective geometrically integral curves over $\Spec R$ with sections $\sigma_i$. Let $\mathcal X$ be their fibre product. Let $\mathscr L_K \in \Pic(\mathcal C_{i,K})$, and let $\mathscr L_0 \in \Pic(\mathcal C_{i,0})$ be its specialisation. Then for all $i$, we have
\[
\operatorname{sp}(E_i(\mathscr L_K)) = E_i(\mathscr L_0).
\]
If $\mathscr L_0$ corresponds to an isogeny factor $A_0$ of $\Jac_{\mathcal C_{i,0}}$, then $\mathscr L_K$ corresponds to the isogeny factor $\mathcal A_K$ of $\Jac_{\mathcal C_{i,K}}$ for an abelian scheme $\mathcal A$ over $R$ whose special fibre $\mathcal A_0$ is isogenous to $A_0$.
\end{Lemma}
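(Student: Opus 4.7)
The plan is to exploit naturality: all ingredients in \boldref{Lem Pic product general} commute with base change, so the decomposition components of $\mathscr L_K$ specialise to those of $\mathscr L_0$. This reduces the identity $\operatorname{sp}(E_i(\mathscr L_K)) = E_i(\mathscr L_0)$ to the fact that specialisation of $\Hom$ groups of abelian schemes is $\Q$-linear and compatible with composition and transposition. For the abelian-scheme statement, I would lift the idempotent $p_0$ cutting out $A_0$ through the injective specialisation map to an idempotent $q_K \in E_i(\mathscr L_K)$, extend it to $R$ by the N\'eron mapping property, clear denominators, and take the scheme-theoretic closure of its image to obtain $\mathcal A$.

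Concretely, choose sections $\sigma_i$ of $\mathcal C_i / R$; they restrict to sections on both fibres. The projections appearing in \boldref{Lem Pic product general}, namely pullback along the $\sigma_j$ for the $\Pic(C_i)$-factors and the map into $\Hom(\Jac_i,\Jac_j)$ built from the Albanese universal property, are constructed from base-change-stable data. Combined with \boldref{Def specialisation} applied to $T = \PIC_{\mathcal X / R}$ and to $T = \HOM_R(\Jac_{\mathcal C_i}, \Jac_{\mathcal C_j})$, this shows that if $\mathscr L_K = ((\mathscr L_{i,K})_i, (\phi_{ji,K})_{i<j})$ then $\mathscr L_0 = ((\mathscr L_{i,0})_i, (\phi_{ji,0})_{i<j})$ with $\phi_{ji,0} = \operatorname{sp}(\phi_{ji,K})$. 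Since specialisation is an injective $\Q$-linear map on $\Hom^\circ$ that commutes with composition (functoriality of base change) and transposition (naturality of duality), every generator $\phi_{i_1\ldots i_m, K}$ of $E_i(\mathscr L_K)$ specialises to the corresponding generator of $E_i(\mathscr L_0)$. Taking $\Q$-spans yields the equality $\operatorname{sp}(E_i(\mathscr L_K)) = E_i(\mathscr L_0)$, and the restricted map is an isomorphism of $\Q$-rngs.

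Assume now that $E_i(\mathscr L_0) = p_0 \End^\circ(J_{i,0}) p_0$ for the idempotent $p_0$ corresponding to $A_0$. Since $p_0$ is the multiplicative unit of $E_i(\mathscr L_0)$, pulling it back through the rng isomorphism yields a unit $q_K \in E_i(\mathscr L_K)$ with $\operatorname{sp}(q_K) = p_0$; idempotency is automatic because $\operatorname{sp}(q_K^2 - q_K) = 0$ and $\operatorname{sp}$ is injective. The inclusion $E_i(\mathscr L_K) \subseteq q_K \End^\circ(J_{i,K}) q_K$ is immediate, and the reverse inclusion follows by specialising and again using injectivity. By the N\'eron mapping property (see e.g.\ \cite[Ch.~7]{BLR}), $\End_R(\Jac_{\mathcal C_i})$ maps isomorphically onto $\End_K(\Jac_{\mathcal C_{i,K}})$, so $q_K$ extends to an idempotent $q_R \in \End_R^\circ(\Jac_{\mathcal C_i})$; pick $n > 0$ with $nq_R \in \End_R$, and let $\mathcal A$ be the scheme-theoretic closure in $\Jac_{\mathcal C_i}$ of the abelian subvariety $(n q_K)(\Jac_{\mathcal C_{i,K}})$. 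Then $\mathcal A$ is $R$-flat and proper, and it is a subgroup scheme as the closure of one in a proper group scheme; its generic fibre is the isogeny factor corresponding to $q_K$ (hence to $\mathscr L_K$), and its special fibre is set-theoretically $(n p_0)(J_{i,0})$, an abelian subvariety isogenous to $A_0$.

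The main technical point is upgrading $\mathcal A$ from a flat proper subgroup scheme to a genuine abelian subscheme, i.e.\ showing $\mathcal A_0$ is reduced (equivalently, smooth). I would handle this either by invoking the standard extension theorem for abelian subvarieties under N\'eron models, or, more symmetrically, by also forming the closure $\mathcal B$ of the image of $n(1-q_R)$: the map $\Jac_{\mathcal C_i} \to \mathcal A \times_R \mathcal B$ induced by $q_R$ and $1-q_R$ is an isogeny on each fibre and hence an isogeny over $R$, which forces both $\mathcal A$ and $\mathcal B$ to be smooth. With that in hand, $\mathcal A_0$ coincides scheme-theoretically with the abelian subvariety $(n p_0)(J_{i,0})$ and is therefore isogenous to $A_0$, completing the proof.
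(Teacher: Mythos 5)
The first half of your argument is the paper's proof: the decomposition of \boldref{Lem Pic product general} specialises componentwise, so $\operatorname{sp}(\phi_{ji,K})=\phi_{ji,0}$ and hence $\operatorname{sp}(E_i(\mathscr L_K))=E_i(\mathscr L_0)$, and the idempotent $p_0$ lifts through the injective specialisation map to an idempotent $q_K\in E_i(\mathscr L_K)$ with $E_i(\mathscr L_K)=q_K\End^\circ(\Jac_{\mathcal C_{i,K}})q_K$. The divergence is in how you produce the abelian scheme $\mathcal A$, and that is where there is a genuine gap. You take $\mathcal A$ to be the scheme-theoretic closure of $nq_K(\Jac_{\mathcal C_{i,K}})$ inside $\Jac_{\mathcal C_i}$ and must then show that $\mathcal A_0$ is reduced. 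Your proposed fix is circular: on the special fibre the map $h_0=(np_0,\,n(1-p_0))\colon \Jac_{\mathcal C_{i,0}}\to\mathcal A_0\times\mathcal B_0$ is only known to have finite kernel and (after a dimension and irreducibility argument you do not give) image dense in the target; that is not yet an isogeny. An isogeny is in particular faithfully flat, and fppf-surjectivity of $h_0$ would indeed exhibit $\mathcal A_0\times\mathcal B_0$ as a quotient of the smooth group $\Jac_{\mathcal C_{i,0}}$ by a finite subgroup scheme, hence smooth --- but that surjectivity is exactly equivalent to the reducedness you are trying to prove. Nothing in your argument excludes $\mathcal A_0$ being an infinitesimal thickening of the abelian variety $np_0(\Jac_{\mathcal C_{i,0}})$: the prime-to-$p$ torsion comparison rules out an \'etale discrepancy, but a finite connected $p$-group's worth of multiplicity in the closed fibre of the flat closure is precisely the kind of phenomenon (compare $\mu_p\subseteq\mathbb G_m$ over $\Z_p$) that has to be excluded, and your other escape route, ``the standard extension theorem for abelian subvarieties under N\'eron models'', is essentially a citation of the statement to be proved.

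The paper avoids this entirely, and its route shows the closure is unnecessary: the lemma only asks for $\mathcal A_0$ \emph{up to isogeny}, not for an abelian subscheme of $\Jac_{\mathcal C_i}$. One lets $\mathcal A_K$ be the isogeny factor of $\Jac_{\mathcal C_{i,K}}$ corresponding to $q_K$; since $V_\ell(\mathcal A_K)$ is a subquotient of $V_\ell(\Jac_{\mathcal C_{i,K}})$ and the latter is unramified (the Jacobian has good reduction), N\'eron--Ogg--Shafarevich gives good reduction of $\mathcal A_K$, so its N\'eron model $\mathcal A$ over $R$ is an abelian scheme with no smoothness issue to check. Writing $q_K=\iota\pi$ with $\pi\iota=\id$, the N\'eron mapping property extends $\pi$ and $n\iota$ over $R$, and uniqueness of these extensions yields $\pi_0\iota_0=\id$ and $\iota_0\pi_0=p_0$ on the special fibre; hence $\mathcal A_0$ corresponds to the idempotent $p_0$ and is therefore isogenous to $A_0$. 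If you insist on your construction, you would need to supply the missing reducedness statement (for instance via a degree computation in the flat family comparing $\mathcal A_0$ with its reduction, or by proving that the special fibre of the corestriction $\Jac_{\mathcal C_i}\to\mathcal A$ is fppf-surjective), none of which is in your sketch; as written, the proof is incomplete at exactly the point you identified as the main technical one.
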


\begin{proof}
Let $\mathscr L_{i,K}$ ($\mathscr L_{i,0}$) and $\phi_{ji,K}$ ($\phi_{ji,0}$) denote the components of $\mathscr L_K$ ($\mathscr L_0$) as in \boldref{Def components of line bundle}. Since specialisation acts componentwise on the right hand side of \boldref{Lem Pic product general}, we get $\operatorname{sp}(\phi_{ji,K}) = \phi_{ji,0}$. We deduce that
\[
\operatorname{sp}(\phi_{i_1\ldots i_m, K}) = \phi_{i_1\ldots i_m, 0}.
\]
Taking $\Q$-vector spaces spanned by these elements proves the first statement.

For the final statement, if $E_i(\mathscr L_0) = p\End^\circ(\Jac_{\mathcal C_{i,0}})p$ for some idempotent $p$, then $p = \operatorname{sp}(q)$ for some $q \in \End^\circ(\Jac_{\mathcal C_{i,K}})$. Since specialisation is injective, we conclude that such $q$ is unique, and that $q$ is an idempotent as well. Similarly, $\psi_K \in \End^\circ(\Jac_{\mathcal C_{i,K}})$ satisfies $q\psi_K = \psi_K = \psi_K q$ if and only if $\psi_0 = \operatorname{sp}(\psi_K)$ satisfies $p\psi_0 = \psi_0 = \psi_0 p$. Thus, we conclude from the first statement that $E_i(\mathscr L_K) = q\End^\circ(\Jac_{\mathcal C_{i,K}})q$.

Let $\mathcal A_K$ be the isogeny factor corresponding to $q$. Then $\mathcal A_K$ has good reduction by N\'eron--Ogg--Shafarevich \cite[Thm.~1]{SerTat}, since $\Jac_{\mathcal C_{i,K}}$ does. Let $\mathcal A$ be the N\'eron model over $\Spec R$. Let $(\iota,\pi)$ correspond to the idempotent $q$ as in \boldref{Def corresponds to isogeny factor}. By the N\'eron property of abelian schemes, $\pi$ extends uniquely to a morphism $\pi \colon \PIC_{\mathcal C_i/R} \to \mathcal A$. Similarly, if $n$ is such that $n \iota \in \Hom(\mathcal A_K, \Jac_{\mathcal C_{i,K}})$, then $n \iota$ extends uniquely to a morphism $\mathcal A \to \PIC_{\mathcal C_i/R}$, which we also denote $n \iota$. The uniqueness statement implies that $\pi_0 \iota_0 = \id$ and $\iota_0 \pi_0 = p$. Therefore $p$ corresponds to the reduction $\mathcal A_0$ of $\mathcal A_K$, hence $\mathcal A_0$ is isogenous to $A_0$.
\end{proof}

\begin{Rmk}
Unlike \boldref{Lem pullback and composition}, there is no converse to the final statement of \boldref{Lem specialisation and composition}. For example, if $\End^\circ(\Jac_{\mathcal C_{i,0}})$ is larger than $\End^\circ(\Jac_{\mathcal C_{i,K}})$ and $\mathscr L_{i,K}$ generates all endomorphisms of $\Jac_{\mathcal C_{i,K}}$, then $\mathscr L_{i,0}$ does not generate all endomorphisms of $\Jac_{\mathcal C_{i,0}}$, and in fact does not correspond to \emph{any} isogeny factor.
\end{Rmk}

Using \boldref{Lem specialisation and composition}, we can use $E_i(\mathscr L)$ as an obstruction to lifting line bundles.

\begin{Prop}\label{Prop no multiple lifts}
Let $C_1,\ldots,C_r$ be smooth projective curves over a field $k$ of characteristic $p > 0$ such that all endomorphisms of $J_1, \ldots, J_r$ are defined over $k$. Let $\mathscr L$ be a line bundle on $X = \prod_i C_i$ that corresponds to a nonzero supersingular isogeny factor $A$ of $\End^\circ(J_i)$ for some $i$ (see \boldref{Def corresponds to isogeny factor}). Then for any DVR $R$ with residue field $k$ and any lifts $\mathcal C_i \to \Spec R$ of the $C_i$, no multiple $\mathscr L^{\otimes m}$ for $m > 0$ can be lifted to $\mathcal X = \prod_i \mathcal C_i$.
\end{Prop}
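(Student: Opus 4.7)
The plan is to argue by contradiction. Suppose that $\mathscr L^{\otimes m}$ lifts to a line bundle $\mathscr M$ on $\mathcal X = \prod_i \mathcal C_i$ for some $m > 0$. First I observe that $E_i(\mathscr L^{\otimes m}) = E_i(\mathscr L)$: passing from $\mathscr L$ to $\mathscr L^{\otimes m}$ scales each $\phi_{ji}$ by $m$, so each composition in \boldref{Def components of line bundle} is scaled by a power of $m$ and the $\Q$-span is unchanged. Hence $\mathscr L^{\otimes m}$ also corresponds to the nonzero supersingular isogeny factor $A$ of $J_i$, of some dimension $g' \geq 1$.

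Applying \boldref{Lem specialisation and composition} to $\mathscr M$, the generic fibre $\mathscr M_K$ corresponds to an isogeny factor $\mathcal A_K$ of $\Jac_{\mathcal C_{i,K}}$ whose reduction $\mathcal A_0$ is isogenous to $A$. The proof of that lemma also identifies $E_i(\mathscr M_K)$ with $\End^\circ(\mathcal A_K)$ via the idempotent $q$, and shows that specialisation restricts to a map $\End^\circ(\mathcal A_K) \to \End^\circ(\mathcal A_0)$ whose image equals $E_i(\mathscr L^{\otimes m}) = E_i(\mathscr L) = \End^\circ(A) \cong \End^\circ(\mathcal A_0)$. Since specialisation of homomorphisms of abelian schemes is injective (\boldref{Def specialisation}), this yields an isomorphism $\End^\circ(\mathcal A_K) \cong \End^\circ(\mathcal A_0)$ of $\Q$-algebras.

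Since $A$ is supersingular of dimension $g'$ and $\mathcal A_0 \sim A$, we have $\End^\circ(\mathcal A_0) \cong M_{g'}(B_{p,\infty})$, where $B_{p,\infty}$ is the quaternion algebra over $\Q$ ramified exactly at $p$ and $\infty$; this is a simple $\Q$-algebra of dimension $4(g')^2$. To conclude I would rule out such an endomorphism algebra in characteristic $0$: since $M_{g'}(B_{p,\infty})$ is simple, the isogeny decomposition of $\mathcal A_K$ has a unique simple factor $B$, and comparing $\Q$-ranks then forces $\dim B = 1$ and $\End^\circ(B) \cong B_{p,\infty}$. But the endomorphism algebra of an elliptic curve in characteristic $0$ is either $\Q$ or an imaginary quadratic field by Albert's classification, never a quaternion algebra, giving the required contradiction. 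The main conceptual step is the transition from a lift of the line bundle to a lift of the idempotent cutting out $A$, which is exactly the content of \boldref{Lem specialisation and composition}; after that, the characteristic $0$ obstruction is classical. The hypothesis that all endomorphisms of the $J_i$ are defined over $k$ ensures that the $E_i$ are correctly computed on $k$-points, so that the specialisation identification above is clean.
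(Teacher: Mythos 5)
Your argument is correct and is essentially the paper's proof: both reduce to $m=1$ via $E_i(\mathscr L^{\otimes m})=E_i(\mathscr L)$ and then apply \boldref{Lem specialisation and composition} to produce a characteristic-$0$ isogeny factor $\mathcal A_K$ lifting $A$ together with its entire endomorphism algebra. The only (immaterial) difference is the final classical input: the paper concludes by a dimension count ($\dim_\Q \End^\circ \leq 2g'^2$ for a $g'$-dimensional abelian variety in characteristic $0$, versus $4g'^2$ for the supersingular $A$), whereas you deduce from $\End^\circ(\mathcal A_K)\cong M_{g'}(B_{p,\infty})$ that $\mathcal A_K$ would be isogenous to a power of an elliptic curve with quaternionic multiplication, which is impossible in characteristic $0$.
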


\begin{proof}
Note that $E_i(\mathscr L^{\otimes m}) = E_i(\mathscr L)$, so we may take $m = 1$. Suppose $\mathcal C_i$ are lifts of the $C_i$ and $\widetilde{\mathscr L}$ is a lift of $\mathscr L$. By \boldref{Lem specialisation and composition}, $\widetilde{\mathscr L}_K$ corresponds to a lift $\mathcal A_K$ of $A$ (up to isogeny). From the equality $\operatorname{sp}(E_i(\mathscr L_K)) = E_i(\mathscr L_0) $, it follows that specialisation $\End^\circ(\mathcal A_K) \stackrel\sim\to \End^\circ(A)$ is an isomorphism. 

But $A$ is supersingular, so by a dimension count it is impossible to lift all its endomorphisms simultaneously (see e.g.~\cite[Cor.~4.3.9]{Thesis}).
\end{proof}

\section{Generation by Rosati dual elements}\label{Sec Rosati}
In \boldref{Lem generate End} below, we give an example of the situation of \boldref{Prop no multiple lifts}. The following slightly more technical result is needed to make an example of minimal dimension in \boldref{thm intro}. The reader who does not care about such matters may skip the proof; see \boldref{Rmk Albert}.

\begin{Thm}\label{Thm Rosati dual elements}
Let $(A,\phi)$ be a polarised supersingular abelian variety of dimension $g \geq 2$ over a field $k$ containing $\bar \F_p$. Then there exists an element $x \in \End^\circ(A)$ such that $x$ and $x^\dagger = \phi^{-1}x\T\phi$ generate $\End^\circ(A)$ as $\Q$-rng. 
\end{Thm}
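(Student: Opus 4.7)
The plan is to combine the structure theorem for $\End^\circ(A)$ with a density/Burnside argument. Since $A$ is supersingular of dimension $g$, we have $B := \End^\circ(A) \cong M_g(D)$, where $D$ is the definite rational quaternion algebra ramified exactly at $p$ and $\infty$; the Rosati involution $\dagger$ induced by $\phi$ is a positive involution of Albert type III, of the form $X \mapsto H^{-1}\bar X^T H$ in suitable coordinates for some positive definite Hermitian matrix $H$, with $\bar\cdot$ the canonical involution on $D$.

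First I would observe that the condition on $x \in B$ that $\dim_\Q \Q\langle x, x^\dagger\rangle = \dim_\Q B = 4g^2$ is Zariski open on the affine $\Q$-space $B \cong \A^{4g^2}_\Q$: for $N$ sufficiently large, the dimension in question equals the rank of the matrix expressing words of length $\le N$ in two variables, evaluated at $(X, Y) = (x, x^\dagger)$, in a fixed $\Q$-basis of $B$, and maximality is cut out by the nonvanishing of some $(4g^2)\times(4g^2)$ minor polynomial in the coordinates of $x$. Since $\Q$-points are Zariski dense in any nonempty Zariski open of $\A^{4g^2}_\Q$, the problem reduces to producing a single $x \in B_{\bar\Q} \cong M_{2g}(\bar\Q)$ for which $x$ and $x^\dagger$ generate $M_{2g}(\bar\Q)$ as a $\bar\Q$-rng. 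By Burnside's density theorem, this is equivalent to the common centralizer $Z_{M_{2g}(\bar\Q)}(x, x^\dagger)$ reducing to $\bar\Q \cdot I$.

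Next I would identify the extended involution: after splitting $D \otimes \bar\Q \cong M_2(\bar\Q)$ the canonical involution on $D$ becomes the symplectic involution $M \mapsto J^{-1} M^T J$, so $\dagger$ on $M_{2g}(\bar\Q)$ has the form $X \mapsto s^{-1} X^T s$ for some nondegenerate antisymmetric $s$. I would then take $x$ to be a small generic perturbation of a diagonal matrix with $2g$ distinct eigenvalues: $Z(x) = \bar\Q[x]$ is exactly the diagonal subalgebra in $x$'s eigenbasis, and for generic such $x$ every off-diagonal entry of $x^\dagger$ in this eigenbasis is non-zero, so the commutation $yx^\dagger = x^\dagger y$ for a diagonal $y$ forces $(y_{ii} - y_{jj}) (x^\dagger)_{ij} = 0$ for all $i \ne j$ and hence $y = \lambda I$. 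Descent via the density argument then yields an $x \in B$ over $\Q$.

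The hypothesis $g \ge 2$ is essential and pinpoints the main obstacle: for $g = 1$, $B = D$ and any $\alpha \in D$ satisfies $\bar\alpha = \tr(\alpha) - \alpha \in \Q[\alpha]$ (equivalently, Cayley--Hamilton on $M_2(\bar\Q)$ forces $x^\dagger = \tr(x) I - x$), so $\Q\langle \alpha, \bar\alpha\rangle = \Q[\alpha]$ is commutative of dimension at most $2 < 4 = \dim_\Q D$. The subtle step in the plan is therefore the Burnside/generic-perturbation step: one must ensure that for $2g \ge 4$ the symplectic adjoint $x^\dagger$ of a generic $x$ is not a polynomial in $x$ and that its off-diagonal entries in $x$'s eigenbasis are generic, which uses precisely the extra matrix dimensions that are unavailable when $g = 1$.
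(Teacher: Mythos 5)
Your overall strategy coincides with the paper's: identify $\End^\circ(A)\cong M_g(D)$ with the Rosati involution of type III, observe that the locus of $x$ for which $x$ and $x^\dagger$ generate is Zariski open in the affine space $\A^{4g^2}_\Q$, and use density of $\Q$-points to reduce to exhibiting a single generator after extension of scalars to $\bar\Q$, where the involution becomes $X\mapsto s^{-1}X\T s$ with $s$ alternating. The paper concludes with an explicit nilpotent shift matrix and a hands-on computation; you instead propose a generic semisimple $x$, which is a legitimate variant of the same reduction.

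The genuine flaw is the sentence invoking Burnside: generating $M_{2g}(\bar\Q)$ as an algebra is \emph{not} equivalent to the common centralizer of $x,x^\dagger$ being the scalars. Burnside's theorem needs irreducibility of the action, and a trivial centralizer does not imply irreducibility: the upper triangular algebra in $M_2(\bar\Q)$ is stable under the symplectic involution and has scalar centralizer, yet is proper. (Over $\Q$ the implication could be rescued using positivity of the Rosati involution, which forces a $\dagger$-stable subalgebra to be semisimple, hence central simple with trivial centralizer, hence of full dimension; but you run the argument over $\bar\Q$, where positivity is meaningless and the equivalence fails.) Your construction can be repaired without the centralizer detour: if $x$ has $2g$ distinct \emph{nonzero} eigenvalues, the $\Q$-rng (no unit, as the statement requires) generated by $x$ alone already contains all eigen-idempotents $e_{ii}$ by a Vandermonde argument, and then $e_{ii}\,x^\dagger\, e_{jj}=(x^\dagger)_{ij}e_{ij}$ produces every elementary matrix, provided all off-diagonal entries of $x^\dagger$ in the eigenbasis of $x$ are nonzero. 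That genericity claim, which you assert without proof, is exactly the point where $g\geq 2$ enters and does need justification (for $g=1$ it fails: $x^\dagger$ is automatically diagonal in the eigenbasis of $x$). It is true and quick: writing $x=P\Lambda P^{-1}$, in the eigenbasis one has $x^\dagger=Q^{-1}\Lambda Q$ with $Q=P\T sP$ alternating, and the coefficient of $\lambda_k$ in the $(i,j)$-entry is $(Q^{-1})_{ik}Q_{kj}$, which is not identically zero in $Q$ as soon as one may choose $k\notin\{i,j\}$, i.e.\ as soon as $2g\geq 3$; hence for generic $(P,\Lambda)$ all off-diagonal entries are simultaneously nonzero. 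With these two repairs the proposal gives a correct proof, parallel to the paper's but with a generic semisimple generator in place of the explicit nilpotent one.
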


\begin{proof}
Any supersingular abelian variety over a field containing $\bar \F_p$ is isogenous to $E^g$, where $E$ is a supersingular elliptic curve. Then $D = \End^\circ(E)$ is the quaternion algebra over $\Q$ ramified only at $p$ and $\infty$, and $\End^\circ(A) \cong M_g(D)$. Moreover, when $A$ is supersingular, the Rosati involution on $\End^\circ(A)$ does not depend on the rational polarisation used \cite[Prop.~1.4.2]{Eke}, so we may assume that $\phi$ is the product polarisation. 

Then the Rosati involution on $M_g(D)$ is given by
\begin{align*}
(-)^\dagger \colon M_g(D) &\to M_g(D) \\
(a_{ij}) &\mapsto (a_{ji}^\dagger),
\end{align*}
where $a^\dagger = \operatorname{Trd}(a) - a$ is the Rosati involution on $D = \End^\circ(E)$.

Write $\A(M_g(D))$ for $M_g(D)$ viewed as affine space over $\Q$, and note that the ring operations are given by morphisms of $\Q$-varieties. Then the set $U$ of elements $x \in \A(M_g(D))$ such that $x$ and $x^\dagger$ generate $M_g(D)$ as $\Q$-rng is Zariski open. Indeed, for every subset $W \subseteq \mathbb Z^{\ast 2}\setminus \{e\}$ of size $4g^2$ of nontrivial words, the locus in $\A(M_g(D))$ where $\{w(x,x^\dagger)\ |\ w \in W\}$ generates $M_g(D)$ as $\Q$-vector space is given by the nonvanishing of a certain $4g^2\times 4g^2$ determinant whose coefficients depend on $x$ through the structure coefficients for multiplication and involution. For each $W$ this gives an open set where the $w(x,x^\dagger)$ generate, and $U$ is the union of these open sets over all sets $W\subseteq\Z^{\ast 2}\setminus\{e\}$ of size $4g^2$.

But an open subset $U \subseteq \A^{4g^2}_{\mathbb Q}$ has a $\Q$-point if and only if it is nonempty, i.e.~if and only if it has a $\bar \Q$-point. Thus, it suffices to study $\End^\circ(A)\otimes_\Q \bar \Q$. The algebra $\End^\circ(A) \otimes_\Q \bar \Q$ is isomorphic to $M_{2g}(\bar \Q)$, with involution $(-)^\dagger$ given by
\[
\begin{pmatrix} \begin{pmatrix} a_{11} & b_{11} \\ c_{11} & d_{11} \end{pmatrix} &\!\!\!\! \cdots \!\!\!\!& \begin{pmatrix} a_{1g} & b_{1g} \\ c_{1g} & d_{1g} \end{pmatrix} \\ \vdots &\!\!  \!\!& \vdots \\ \begin{pmatrix} a_{g1} & b_{g1} \\ c_{g1} & d_{g1} \end{pmatrix} &\!\!\!\! \cdots \!\!\!\!& \begin{pmatrix} a_{gg} & b_{gg} \\ c_{gg} & d_{gg} \end{pmatrix} \end{pmatrix}\mapsto \begin{pmatrix} \begin{pmatrix} d_{11} & -b_{11} \\ -c_{11} & a_{11} \end{pmatrix} &\!\!\!\! \cdots \!\!\!\!& \begin{pmatrix} d_{g1} & -b_{g1} \\ -c_{g1} & a_{g1} \end{pmatrix} \\ \vdots &\!\!\!\! \!\!\!\!& \vdots \\ \begin{pmatrix} d_{1g} & -b_{1g} \\ -c_{1g} & a_{1g} \end{pmatrix} &\!\!\!\! \cdots \!\!\!\!& \begin{pmatrix} d_{gg} & -b_{gg} \\ -c_{gg} & a_{gg} \end{pmatrix} \end{pmatrix}.
\]
Now consider the matrix
\[
x = \begin{pmatrix} 0 & 1 & & & \\ & 0 & 1 & & \\ & & & \ddots & \\ & & & & 1 \\ & & & & 0 \end{pmatrix}.
\]
We want to show that the $\bar \Q$-subrng $B \subseteq M_{2g}(\bar \Q)$ generated by $x$ and $x^\dagger$ is $M_{2g}(\bar \Q)$. One easily computes
\begin{align*}
x^{2g-1} &= e_{1,2g}, \\
x^{2g-3} &= e_{1,2g-2} + e_{2,2g-1} + e_{3,2g}, \\
(x^\dagger)^{2g-3} = (x^{2g-3})^\dagger &= - \left( e_{2g-3,2} + e_{2g,1} + e_{2g-1,4} \right).
\end{align*}
Write $a = x^{2g-1}$ and $b = (x^\dagger)^{2g-3}$, which makes sense because $g \geq 2$. Then $ab = -e_{11}$, hence $bab = -e_{2g,1}$. Thus $x - bab$ is the rotation matrix $\rho$ given by $e_i \mapsto e_{i-1}$ for $i > 1$ and $e_1 \mapsto e_{2g}$. Now the matrices $\rho^a e_{11} \rho^b$ for various $a$ and $b$ give all standard basis vectors $e_{ij}$, hence the matrix algebra $M_{2g}(\bar \Q)$ is generated (as $\bar \Q$-rng) by $x - bab$ and $ab$. Thus, $B = M_{2g}(\bar \Q)$.
\end{proof}

\begin{Rmk}
The theorem is false for $g = 1$. Indeed, for any $x \in D$, we have $x^\dagger = \operatorname{Trd}(x) - x$, so in particular $x$ and $x^\dagger$ commute. Therefore, the non-commutative algebra $D$ can never be generated by an element and its Rosati transpose.
\end{Rmk}

Using the theorem, we construct an example of the situation of \boldref{Prop no multiple lifts}.

\begin{Lemma}\label{Lem generate End}
Let $k$ be an extension of $\bar \F_p$, and let $C$ be a supersingular curve over $k$ of genus $g \geq 2$. Let $r \geq 3$, and set $C_i = C$ for all $i \in \{1,\ldots,r\}$. Then there exists a very ample line bundle $\mathscr L$ on $\prod_i C_i$ such that $E_1(\mathscr L) = \End^\circ(J_1)$.
\end{Lemma}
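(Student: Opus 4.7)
The plan is to combine \boldref{Thm Rosati dual elements}, which yields an element $x \in \End^\circ(J)$ such that $x$ and $x^\dagger$ generate $\End^\circ(J)$ as a $\Q$-rng, with a judicious choice of off-diagonal Picard data realising both $x$ and $x^\dagger$ as triangle loops based at the first factor. The key combinatorial observation is that on three curves the two triangles $1 \to 2 \to 3 \to 1$ and $1 \to 3 \to 2 \to 1$ are traversed in opposite directions and therefore produce dual loops, so a single nontrivial off-diagonal entry suffices to contribute both itself and its Rosati dual to $E_1$.

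Concretely, set $J = J_1 = \ldots = J_r = \Jac_C$ with principal polarisation $\lambda$. \boldref{Thm Rosati dual elements} applied to $(J,\lambda)$ supplies $x \in \End^\circ(J)$ with $x$ and $x^\dagger = \lambda^{-1} x\T \lambda$ generating $\End^\circ(J)$ as a $\Q$-rng; after clearing denominators one may assume $x \in \End(J)$. Using \boldref{Cor Pic product curves}, I would define a line bundle $\mathscr L_0 \in \Pic(C^r)$ by the off-diagonal data
\[
\phi_{21} = \phi_{31} = \id_J, \qquad \phi_{32} = x, \qquad \phi_{ji} = 0 \text{ for all other pairs } i < j,
\]
together with arbitrary $\mathscr L_i \in \Pic(C_i)$. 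By \boldref{Def components of line bundle} and \boldref{Rmk change i and j} (which identifies transposes with Rosati duals via $\lambda$), the two loops based at $1$ coming from the triangle on $\{1,2,3\}$ evaluate to
\[
\phi_{1321} = \phi_{31}^\dagger \circ \phi_{32} \circ \phi_{21} = x, \qquad \phi_{1231} = \phi_{21}^\dagger \circ \phi_{32}^\dagger \circ \phi_{31} = x^\dagger.
\]
Hence $E_1(\mathscr L_0)$ contains the $\Q$-rng generated by $x$ and $x^\dagger$, which is all of $\End^\circ(J)$, and the reverse inclusion is automatic.

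To upgrade $\mathscr L_0$ to a very ample line bundle $\mathscr L$, I would tensor with a high power of an ample pullback: fix a very ample $\mathscr M \in \Pic(C)$, so that $\mathscr H := \bigotimes_i \pi_i^* \mathscr M$ is ample on $C^r$, and set $\mathscr L := \mathscr L_0 \otimes \mathscr H^{\otimes n}$ for $n \gg 0$, which is very ample by Serre vanishing. Since pullbacks from individual factors affect only the $\mathscr L_i$ components in the decomposition of \boldref{Cor Pic product curves} and leave each $\phi_{ji}$ unchanged, one still has $E_1(\mathscr L) = E_1(\mathscr L_0) = \End^\circ(J)$. The substantive step is \boldref{Thm Rosati dual elements} itself, which has already been established; the remaining difficulty is purely the combinatorial ``traverse the same triangle backwards'' trick, which is what allows the construction to live on $C^3$ rather than on a product of four or more curves, and so permits the divisor $X$ of \boldref{thm intro} to be a surface.
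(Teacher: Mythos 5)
Your proposal is correct and follows essentially the same route as the paper: invoke \boldref{Thm Rosati dual elements} to get $x$ and $x^\dagger$ generating $\End^\circ(J)$ as a $\Q$-rng, place $x$ on one edge of the triangle and identities on the other two so that the two loops $\phi_{1321}$ and $\phi_{1231}$ give $x$ and $x^\dagger$, and then achieve very ampleness by twisting with a large power of an ample box product, which leaves the off-diagonal components $\phi_{ji}$ (hence $E_1$) unchanged. The only differences are cosmetic (you put $x$ at $\phi_{32}$ rather than $\phi_{21}$, and you phrase the positivity step via $\mathscr L_0\otimes\mathscr H^{\otimes n}$ instead of the components $\mathcal O_{C_i}(dP)$ with $d\gg 0$).
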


\begin{Rmk}
That is, $\mathscr L$ generates all endomorphisms of $J_1$ (in the sense of \boldref{Def corresponds to isogeny factor}), hence in particular corresponds to a nonzero supersingular isogeny factor. Thus, \boldref{Prop no multiple lifts} implies that no multiple $\mathscr L^{\otimes m}$ for $m > 0$ can be lifted to $\prod \mathcal C_i$, for \emph{any} lifts $\mathcal C_i$ of $C_i$.
\end{Rmk}

\begin{proof}[Proof of Lemma.]
By \boldref{Thm Rosati dual elements}, there exists $x \in \End^\circ(J_1)$ such that $x$ and $x^\dagger = x\T$ generate $\End^\circ(J_1)$ as $\Q$-rng. Now set $\phi_{21} = x$, and $\phi_{31} = \phi_{32} = 1$. Then the maps
\begin{align*}
\phi_{1321} = \phi_{31}\T\phi_{32}\phi_{21} \colon J_1 &\to J_1\\
\phi_{1231} = \phi_{21}\T\phi_{32}\T\phi_{31} \colon J_1 &\to J_1
\end{align*}
are given by $x$ and $x\T$ respectively. In \boldref{Pic noncommutative}, this corresponds to going around the following loops (where all unmarked arrows are the identity):
\begin{equation*}
\begin{tikzcd}[column sep=.7em]
& \bullet \ar{ld}[swap]{x} & & & & & & \bullet \ar{rd} & \\
\bullet \ar{rr} & & \bullet\punct{,} \ar{lu} & & & & \bullet \ar{ru}{x\T} & & \bullet\punct{.}\ar{ll}
\end{tikzcd}
\end{equation*}
If $P \in C(k)$ is a rational point, then the line bundle $\mathcal O(P)^{\boxtimes r}$ is very ample. Hence, for $d \gg 0$, the line bundle
$$\mathscr L = \left(\big(\mathcal O_{C_i}(dP)\big)_i, \big(\phi_{ji}\big)_{i < j}\right)$$
is very ample and satisfies $E_1(\mathscr L) = \End^\circ(J_1)$.
\end{proof}

\begin{Rmk}\label{Rmk Albert}
For $r \geq 4$ we do not need to use \boldref{Thm Rosati dual elements}. Indeed, by Albert's theorem on generation of separable algebras \cite{Alb} there exist $x, y \in \End^\circ(J_1)$ that generate it as $\Q$-algebra (see also \cite[Thm.~7.2.1]{Thesis} for an elementary geometric proof analogous to our proof of \boldref{Thm Rosati dual elements} above). Hence the elements $1$, $x$, and $y$ generate $\End^\circ(J_1)$ as $\Q$-rng.

Then we can run the argument of \boldref{Lem generate End} using $\phi_{42} = x$, $\phi_{43} = y$, and all other $\phi_{ji}$ equal to $1$. The loops $\phi_{1421} = x$, $\phi_{1431} = y$, and $\phi_{1321} = 1$ then show $E_1(\mathscr L) = \End^\circ(J_1)$. 

This corresponds to going around the following loops in \boldref{Pic noncommutative} (where again all unmarked arrows are the identity):
\begin{equation*}
\begin{tikzcd}
\bullet \ar{r} & \bullet \ar{d}{x} & & \bullet \ar{d} & \bullet & & \bullet \ar{r} & \bullet \ar{ld} \\
\bullet & \bullet\punct{,} \ar{ul} & & \bullet \ar{r}[swap]{y} & \bullet\punct{,} \ar{ul} & & \bullet \ar{u} & \bullet\punct{.}
\end{tikzcd}
\end{equation*}
\end{Rmk}

\section{Stably irreducible divisors}\label{Sec stably irreducible}
We introduce the following property that plays a role in the proof of \boldref{thm intro}, as explained at the end of the \hyperref[Sec intro]{Introduction}.

\begin{Def}\label{Def stably irreducible}
Let $k$ be a field, and let $C_1, \ldots, C_r$ be smooth projective curves over $k$. Then an effective divisor $D \subseteq \prod_{i=1}^r C_i$ is \emph{stably irreducible} 
if for all finite coverings $f_i \colon C'_i \to C_i$ of the $C_i$ by smooth projective curves $C'_i$, the inverse image $D' \subseteq \prod_i C'_i$ of $D$ under $f \colon \prod_i C'_i \to \prod_i C_i$ is geometrically irreducible. In particular, $D$ itself is geometrically irreducible.
\end{Def}

We will show that a sufficiently general divisor $D$ satisfies this property; see \boldref{Prop stably irreducible}. The local computation we use is the following.

\begin{Lemma}\label{Lem stably irreducible}
Let $r \geq 2$, let $C_1, \ldots, C_r$ be smooth projective curves over a field $k$, and let $D \subseteq \prod_{i=1}^r C_i$ be an ample effective divisor. Assume that all of the following hold:
\begin{enumerate}
\item $D$ is geometrically normal;\label{Item normal}
\item $D \cap \pi_i^{-1}(x_i)$ is generically smooth for all $i$ and all $x_i \in C_i$;\label{Item single fibre}
\item $D$ does not contain $\pi_i^{-1}(x_i) \cap \pi_j^{-1}(x_j)$ for any closed points $x_i \in C_i$ and $x_j \in C_j$ for $i \neq j$.\label{Item double fibre}
\end{enumerate}
Then $D$ is stably irreducible.
\end{Lemma}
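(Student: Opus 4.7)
The plan is to show that $D' := f^{-1}(D) \subseteq Y := \prod_i C'_i$ is geometrically irreducible by proving it is both geometrically connected and geometrically normal; these together force geometric integrality. Since all the hypotheses are stable under extension of scalars, we may assume $k = \bar k$.

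Connectedness is immediate: $f$ is finite surjective, so $D' = f^*D$ is ample on the smooth projective connected variety $Y$ of dimension $r \geq 2$, and any ample effective divisor on such a variety is connected (e.g.\ by affineness of the complement together with the usual $H^0$ vanishing, or by Hartshorne's connectedness theorem).

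For normality I apply Serre's criterion $R_1 + S_2$. The property $S_2$ is automatic: $D'$ is an effective Cartier divisor in the smooth $Y$, hence a local complete intersection, hence Cohen--Macaulay. For $R_1$ I verify regularity of $\mathcal{O}_{D', z}$ for each $z \in D'$ with $\dim \mathcal{O}_{D', z} = 1$, i.e., $\overline{\{z\}}$ of codimension two in $Y$. Let $R_i \subseteq C'_i$ be the finite ramification locus of $f_i$ and set $I(z) := \{i : \pi'_i(z) \in R_i\}$. The case $I(z) = \varnothing$ is trivial: then $f$ is étale at $z$, so $\mathcal{O}_{D', z}$ is étale over $\mathcal{O}_{D, f(z)}$, which is regular since $D$ is normal and $f(z)$ has codimension one in $D$. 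The case $|I(z)| \geq 2$ is ruled out by hypothesis (3): pick $\{i, j\} \subseteq I(z)$ with $\pi'_i(z) = y_i$ and $\pi'_j(z) = y_j$; then $\overline{\{z\}}$ lies in the codimension-two subvariety $(\pi'_i)^{-1}(y_i) \cap (\pi'_j)^{-1}(y_j)$ and must equal it by the codimension assumption, so $f(\overline{\{z\}}) = \pi_i^{-1}(f_i(y_i)) \cap \pi_j^{-1}(f_j(y_j)) \subseteq D$, contradicting (3).

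The essential case is $|I(z)| = 1$, say $I(z) = \{1\}$ with ramification index $e_1 \geq 2$. In étale local coordinates, $f$ is $(s_1, \ldots, s_r) \mapsto (s_1^{e_1}, s_2, \ldots, s_r)$ and $D = \{g(t_1, \ldots, t_r) = 0\}$, so $D' = \{g(s_1^{e_1}, s_2, \ldots, s_r) = 0\}$. Restricted to the fibre $(\pi'_1)^{-1}(y_1) = \{s_1 = 0\}$, the divisor $D'$ is cut out by $g(0, s_2, \ldots, s_r)$, étale-locally identified with $D \cap \pi_1^{-1}(f_1(y_1))$; by hypothesis (2) the latter is smooth at the generic point of $\overline{\{z\}}$, meaning $g(0, s_2, \ldots, s_r)$ is a uniformizer of the one-dimensional regular local ring $\mathcal{O}_{(\pi'_1)^{-1}(y_1), z}$. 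Since $s_1^{e_1} \in \mathfrak{m}_z^2$ (because $e_1 \geq 2$), the full equation $g(s_1^{e_1}, s_2, \ldots, s_r)$ agrees with $g(0, s_2, \ldots, s_r)$ modulo $\mathfrak{m}_z^2$ and therefore has nonzero image in $\mathfrak{m}_z/\mathfrak{m}_z^2$, so $\mathcal{O}_{D', z}$ is regular. This completes $R_1$, hence normality, hence geometric integrality of $D'$. The main obstacle is precisely this local infinitesimal computation translating hypothesis (2) across a ramified cover; the other cases are dimension bookkeeping or standard invocations of normality of $D$.
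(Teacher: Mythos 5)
Your overall strategy is the paper's: connectedness from ampleness, Cohen--Macaulayness since $D'$ is a divisor in a smooth variety, and then $R_1$ at codimension-one points via Serre's criterion, with the essential case reduced to hypothesis (2) through the observation that the remaining factors are \'etale there. Your explicit local-equation computation in the case $|I(z)|=1$ is in substance the paper's argument (which phrases it as: $\mathcal O_{D',x'}$ is flat over the DVR $\mathcal O_{C'_1,y_1}$ with field fibre, hence regular). However, there is a gap: you tacitly assume every $f_i$ is \emph{separable}. Stable irreducibility quantifies over all finite covers by smooth projective curves, and in the paper's application the covers that actually occur are Frobenius twists, i.e.\ purely inseparable. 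For an inseparable $f_i$ the ramification locus is all of $C'_i$ (so $R_i$ is not finite), the case $I(z)=\varnothing$ never gives \'etaleness (not even at points dominating every factor), and your local model breaks down; no case of your analysis covers such covers. The paper disposes of this at the outset: factor each $f_i$ into a purely inseparable morphism followed by a separable one; the purely inseparable part is a finite universal homeomorphism, hence irrelevant to irreducibility of preimages, and one reduces to separable (generically \'etale) covers. Your proof needs this reduction added before the rest applies.

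Two smaller points. First, the claimed normal form $(s_1,\ldots,s_r)\mapsto(s_1^{e_1},s_2,\ldots,s_r)$ in \'etale local coordinates is false at wildly ramified points ($p\mid e_1$); one only has $f_1^{\#}t_1=u\,s_1^{e_1}$ with $u$ a unit. Your computation survives, since all it uses is $f_1^{\#}t_1\in(s_1)\subseteq\mathfrak m_z$ (indeed $\in\mathfrak m_z^2$) together with the \'etale identification of $D'\cap(\pi'_1)^{-1}(y_1)$ with $D\cap\pi_1^{-1}(x_1)$ near $z$, but it should be phrased that way rather than via the normal form. Second, for hypothesis (2) to produce a uniformizer you need $g(0,s_2,\ldots,s_r)\neq 0$, i.e.\ that $D$ contains no fibre $\pi_1^{-1}(x_1)$; this is true (a full fibre contains double fibres, contradicting (3), or alternatively $D$ is integral by normality and connectedness while a fibre is not ample), but it deserves a word, since generic smoothness alone does not exclude it.
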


\begin{proof}
Since all statements are geometric, we may assume $k$ is algebraically closed. Let $f_i \colon C'_i \to C_i$ be finite coverings by smooth projective curves. 
If $f_i$ is purely inseparable, then it is a universal homeomorphism. This does not affect irreducibility, so we only have to treat the case that the $f_i$ are separable, i.e.~generically \'etale.

The inverse image $D' = f^{-1}(D)$ is ample since $D$ is \cite[Cor.~6.6.3]{EGA2}, hence $D'$ is connected since $r \geq 2$ \cite[Cor.~III.7.9]{Hart}. Since $D'$ is a divisor in a regular scheme, it is Cohen--Macaulay \cite[Tag \href{http://stacks.math.columbia.edu/tag/02JN}{02JN}]{Stacks}. We will show that the assumptions on $D$ imply that $D'$ is regular in codimension $1$. Then Serre's criterion implies that $D'$ is normal \cite[Thm.~5.8.6]{EGA4II}. Then $D'$ is integral, since it is normal and connected \cite[5.13.5]{EGA4II}.

Now let $x' \in D'$ be a point of codimension $1$, and consider the image $x'_i$ of $x'$ in $C'_i$. Let $\eta'_i$ be the generic point of $C'_i$. Let $x$, $x_i$, and $\eta_i$ be the images of $x'$ in $X$, of $x'_i$ in $C_i$, and of $\eta'_i$ in $C_i$ respectively. Consider the set
\[
I = \left\{i \in \{1,\ldots,r\}\ \big| \ x'_i \neq \eta'_i\right\} = \left\{i\ \big| \ x_i \neq \eta_i\right\}
\]
of $i \in \{1,\ldots,r\}$ such that $x'$ does not dominate the factor $C'_i$, i.e.~$x'$ lies in a closed fibre of the projection $X' \to C'_i$.

If $|I| > 2$, then $\overline{\{x'\}} \subseteq \pi_i^{-1}(x_i) \cap \pi_j^{-1}(x_j) \cap \pi_k^{-1}(x_k)$ for some $i,j,k \in \{1,\ldots,r\}$ pairwise distinct, contradicting the fact that $x'$ has codimension $1$ in $D'$. If $|I| = 2$, then $x'$ is the generic point of $\pi_i^{-1}(x'_i) \cap \pi_j^{-1}(x'_j)$ for $i \neq j$, hence $D$ contains $\pi_i^{-1}(x_i) \cap \pi_j^{-1}(x_j)$, contradicting assumption \ref{Item double fibre}. Hence, $|I| \leq 1$.

If $|I| = 0$, then $x$ maps to $\eta_i$ for each $i$, hence $\mathcal O_{D,x}$ contains the fields $\kappa(\eta_i)$ for all $i$. Since $f_i$ is separable, the field extension $\mathcal O_{C_i,\eta_i} \to \mathcal O_{C'_i,\eta'_i}$ is \'etale. Hence, $x$ is in the \'etale locus of $D' \to D$. But $\mathcal O_{D,x}$ is regular by assumption \ref{Item normal}, so the same goes for $\mathcal O_{D',x'}$ \cite[Prop.~17.5.8]{EGA4IV}.

Finally, if $|I| = 1$, then $x$ is the generic point of a component of $D \cap \pi_i^{-1}(x_i)$ for some $i$, and similarly for $x'$. As in the case $|I| = 0$, the extensions $C'_j \to C_j$ for $j \neq i$ do not affect normality at $x$, so we may assume that $C'_j = C_j$ for $j \neq i$. Then the natural map $D' \cap \pi_i^{-1}(x'_i) \to D \cap \pi_i^{-1}(x_i)$ is an isomorphism, since $\pi_i^{-1}(x'_i) = \prod_{j \neq i} C'_i \stackrel\sim\rA \prod_{j \neq i} C_j = \pi_i^{-1}(x_i)$ only sees the curves $C'_j = C_j$ for $j \neq i$.

Consider the local homomorphism $\mathcal O_{C'_i,x'_i} \to \mathcal O_{D',x'}$. It is flat, since \ref{Item double fibre} implies that every irreducible component of $D'$ dominates $C'_i$. Moreover, the fibre $\mathcal O_{D', x'}/\mathfrak m_{x'_i}\mathcal O_{D', x'}$ is a field, since $D' \cap \pi_i^{-1}(x'_i) = D \cap \pi_i^{-1}(x_i)$ is generically smooth by assumption \ref{Item single fibre}. Since $\mathcal O_{C'_i, x'_i}$ is regular and $\mathcal O_{C'_i,x'_i} \to \mathcal O_{D',x'}$ is flat and local, we conclude that $\mathcal O_{D',x'}$ is regular \cite[Prop.~6.5.1(ii)]{EGA4II}.
\end{proof}

\begin{Prop}\label{Prop stably irreducible}
Let $r \geq 3$, and let $C_1, \ldots, C_r$ be smooth projective curves over $k$. Let $H$ be an ample divisor on $\prod_{i=1}^r C_i$. Then there exists $n_0 \in \Z_{>0}$ such that for all $n \geq n_0$, a general divisor $D \in |nH|$ is stably irreducible.
\end{Prop}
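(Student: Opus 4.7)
The plan is to apply \boldref{Lem stably irreducible}: it suffices to show, for $n \gg 0$, that each of conditions \ref{Item normal}--\ref{Item double fibre} of that lemma cuts out a dense Zariski open in $|nH|$, so that their intersection consists of stably irreducible divisors. After replacing $H$ with a large initial multiple, I may assume $nH$ is very ample on $X = \prod_i C_i$ and that restriction of global sections $H^0(X, nH) \to H^0(Z, nH|_Z)$ is surjective for every slice $Z = \pi_i^{-1}(x_i)$ or $Z = \pi_i^{-1}(x_i) \cap \pi_j^{-1}(x_j)$ (by Serre vanishing on a product of curves).

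Conditions \ref{Item normal} and \ref{Item double fibre} are routine. For \ref{Item normal}, since $X$ is smooth projective of dimension $r \geq 3$, a general member of the very ample linear system $|nH|$ is geometrically normal by a Bertini theorem valid in arbitrary characteristic. For \ref{Item double fibre}, consider the incidence variety $\{(D, x_i, x_j) \in |nH| \times C_i \times C_j : D \supseteq \pi_i^{-1}(x_i) \cap \pi_j^{-1}(x_j)\}$. For fixed $(x_i, x_j)$ the slice $S = \pi_i^{-1}(x_i) \cap \pi_j^{-1}(x_j)$ has dimension $r - 2 \geq 1$, so the linear condition $D \supseteq S$ has codimension $h^0(S, nH|_S)$ in $|nH|$, growing polynomially in $n$ and eventually exceeding $2 = \dim(C_i \times C_j)$; a dimension count then shows the incidence does not dominate $|nH|$, and the union over the finitely many pairs $(i,j)$ remains a proper closed subset.

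The crux is condition \ref{Item single fibre}: for every $x_i \in C_i$, the divisor $D \cap \pi_i^{-1}(x_i)$ on the smooth $(r-1)$-fold $F_{x_i} = \pi_i^{-1}(x_i)$ must be generically smooth, which over the algebraically closed $k$ is equivalent to reducedness of the Cartier divisor $D|_{F_{x_i}}$. Set $\mathcal B_i = \{(D, x_i) \in |nH| \times C_i : D|_{F_{x_i}} \text{ is not reduced}\}$. Since $\dim C_i = 1$, I need each fiber of $\mathcal B_i \to C_i$ to have codimension $\geq 2$ in $|nH|$. By restriction surjectivity this fiber is the preimage of the non-reduced locus in the restricted linear system, which lies in the image of the ``squaring'' maps $|M| \times |L - 2M| \to |L|$, $(E, E') \mapsto 2E + E'$, where $L = nH|_{F_{x_i}}$ and $M$ ranges over $\Pic(F_{x_i})$. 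A Riemann--Roch estimate---comparing $h^0(L) \sim n^{r-1}$ with the slower-growing parameter spaces on the left---shows the image has codimension tending to infinity as $n \to \infty$, in particular $\geq 2$ for $n \gg 0$. This codimension-$\geq 2$ bound is the main obstacle: classical Bertini only gives codimension $\geq 1$ per fiber, which combined with $\dim C_i = 1$ would fail to exclude $\mathcal B_i$ from dominating $|nH|$; the fix is the quantitative growth of $h^0$ of a very ample line bundle against the parameter spaces of non-reduced decompositions.
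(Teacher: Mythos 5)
Your strategy coincides with the paper's: the proof there also consists of checking conditions \ref{Item normal}--\ref{Item double fibre} of \boldref{Lem stably irreducible} for a general $D \in |nH|$, $n \gg 0$, with the two quantitative Bertini-type counts (generic smoothness of all slices $D \cap \pi_i^{-1}(x_i)$, and avoidance of the positive-dimensional double fibres) outsourced to lemmas in the author's thesis, exactly where you supply your incidence-variety arguments. Your handling of conditions \ref{Item normal} and \ref{Item double fibre} is fine.

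The justification of the crucial codimension-$\geq 2$ bound in condition \ref{Item single fibre}, however, has a genuine gap. First, you let $M$ range over all of $\Pic(F_{x_i})$, which has infinitely many components; you need to note that only classes with both $M$ and $L - 2M$ effective occur, that these have bounded degree and hence lie in finitely many components of the Picard scheme, each of dimension at most $q = \dim \PIC^0_{F_{x_i}}$, so that your bad locus $\mathcal B_i$ is constructible and its dimension can be estimated by adding at most $q$ for the variation of the class. Second, the asserted comparison ``$h^0(L) \sim n^{r-1}$ against slower-growing parameter spaces'' is not correct as stated: if $M$ is, say, a rigid effective divisor of small degree, then $\dim |L - 2M|$ grows at the same order $n^{r-1}$ as $\dim |L|$, so no growth-rate comparison yields codimension $\geq 2$. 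What one actually needs is a genuine codimension estimate, for instance: (i) the addition map $|M| \times |M| \times |L-2M| \to |L|$ is finite onto its image (a divisor has only finitely many decompositions into ordered sums of effective divisors), so the image of the squaring map has codimension at least $\dim |M|$; and (ii) when $\dim |M|$ is small, use instead that vanishing on the nonzero effective divisor $M$ imposes at least $n+1$ independent conditions on $|nH|_{F_{x_i}}|$ (degree-$n$ hypersurfaces in the embedding by $|H|$ separate any $n+1$ points of the support of $M$), so $\dim |L - 2M| \leq \dim|L| - (n+1)$. Splitting according to whether $\dim|M| \geq q+2$ or not, these two bounds give fibres of $\mathcal B_i \to C_i$ of codimension $\geq 2$, uniformly in $x_i$, for $n \gg 0$ --- which is exactly what your dimension count over the one-dimensional base $C_i$ requires. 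With those two points supplied, your argument closes and matches the intended proof.
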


\begin{proof}
There exists $n_0$ such that for all $n \geq n_0$, the divisor $nH$ is very ample. By the usual Bertini smoothness theorem, a general $D \in |nH|$ is smooth, so in particular geometrically normal. Increasing $n_0$ if necessary, for a general $D$ all fibres $D \cap \pi_i^{-1}(x_i)$ are generically smooth (see e.g.~\cite[Lem.~3.1.2]{Thesis}).  

Similarly, we may avoid any finite type family of positive-dimensional subvarieties (see e.g.~\cite[Lem.~3.1.3]{Thesis}), so a general $D$ does not contain any double fibre $\pi_i^{-1}(x_i) \cap \pi_j^{-1}(x_j)$ (these are positive-dimensional since $r \geq 3$). Then \boldref{Lem stably irreducible} shows that these $D$ are stably irreducible.
\end{proof}

\begin{Rmk}
On the other hand, for $r \leq 2$ no effective divisor $D \subseteq \prod_i C_i$ is stably irreducible. This is obvious if $r \leq 1$ and for $r = 2$ if $D$ is pulled back from either curve. For `diagonal' divisors $D \subseteq C_1 \times C_2$, we can first apply a cover to $C_1$ to make its degree in $C_1 \times \Spec K(C_2)$ larger than $1$. Then it picks up an $L$-rational point after a finite extension $L = K(C'_2)$ of $K(C_2)$, hence it becomes reducible in $C'_1 \times C'_2$.
\end{Rmk}

\begin{Ex}
The conclusion of \boldref{Prop stably irreducible} is not true for \emph{all} smooth divisors $D \in |nH|$. For example, let $r = 3$, $C_i = \P^1$ with coordinates $[x_i:y_i]$, and let $D$ be given by $x_1 x_2 x_3 - y_1 y_2 y_3 \in H^0((\P^1)^3, \mathcal O(1)^{\boxtimes 3})$. Consider the affine charts associated with inverting one of $\{x_i, y_i\}$ for each $i$. Then the local equations are $xyz - 1$ and $xy - z$, both of which define a smooth surface.

However, if we take the covers given by $C'_i = \P^1$ with map $C'_i \to C_i$ given by $[x_i:y_i] \mapsto [x_i^2:y_i^2]$, then $D'$ splits as $V(x_1 x_2 x_3 - y_1 y_2 y_3) \cup V(x_1 x_2 x_3 + y_1 y_2 y_3)$. So even when $D$ is smooth (in \emph{arbitrary} characteristic), it is not always stably irreducible. This $D$ violates assumption \ref{Item double fibre} of \boldref{Lem stably irreducible} because it contains $\pi_1^{-1}([0:1]) \cap \pi_2^{-1}([1:0])$.
\end{Ex}

\section{Main construction}\label{Sec main construction}
For every prime $p$, we construct a smooth projective surface $X$ over $\bar \F_p$ with the property that no smooth proper variety $Y$ dominating $X$ can be lifted to characteristic $0$; see \boldref{Con main} and \boldref{Thm main} below.

\begin{Con}\label{Con main}
Let $p$ be a prime, let $r \geq 3$ be an integer, and let $k$ be an algebraically closed field of characteristic $p$. Let $C_1 = \ldots = C_r = C$ be a supersingular curve over $k$ of genus $g \geq 2$. For example, the Fermat curve $x^{q+1} + y^{q+1} + z^{q+1} = 0$ is supersingular if $q$ is a power of $p$ \cite[Lem.~2.9]{KatShi}. Alternatively, a smooth member of Moret-Bailly's family \cite{MB, MB2} is a supersingular curve of genus $2$. Both examples are defined over $\bar \F_p$.

By \boldref{Lem generate End} (or \boldref{Rmk Albert} if $r \geq 4$), there exists a very ample line bundle $\mathscr L$ on $\prod_i C_i$ that generates all endomorphisms of $J_1$ in the sense of \boldref{Def corresponds to isogeny factor}. 

Finally, we define $X \subseteq \prod_i C_i$ as a smooth divisor in $|n\mathscr L|$ for $n \gg 0$ that is stably irreducible (see \boldref{Def stably irreducible}). Such a divisor exists by \boldref{Prop stably irreducible} and the usual Bertini smoothness theorem.
\end{Con}

The following result will be useful in the proof.

\begin{Lemma}\label{Lem irreducible inverse image}
Let $f \colon X \to Y$ be a finite flat morphism of finite type $k$-schemes. Let $V \subseteq X$ be an integral subscheme, and let $W = f(V)$ be its image. If $f^{-1}(W)$ is irreducible, then $f^*[W] = d \cdot [V]$ for some $d \in \Z_{> 0}$.
\end{Lemma}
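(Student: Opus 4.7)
The plan is to unpack what $f^*[W]$ means and observe that irreducibility of $f^{-1}(W)$ forces both cycles to have the same support.

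Since $f$ is finite (hence proper) and $V$ is integral, the scheme-theoretic image $W = f(V)$ is a closed integral subscheme of $Y$, and the map $V \to W$ induced by $f$ is finite and surjective. In particular $\dim V = \dim W$. Now $f$ flat means flat pullback of cycles is defined, and by construction $f^{*}[W]$ is the cycle of the scheme-theoretic preimage $f^{-1}(W) = X \times_Y W$, which has pure dimension $\dim W$ since $f^{-1}(W) \to W$ is finite flat (base change of $f$). Writing $Z_1, \ldots, Z_s$ for the irreducible components of $f^{-1}(W)$ with generic points $\xi_i$, one has
\[
f^{*}[W] = \sum_{i=1}^s \operatorname{length}_{\mathcal O_{f^{-1}(W),\xi_i}}\!\bigl(\mathcal O_{f^{-1}(W),\xi_i}\bigr)\cdot [Z_i].
\]

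By hypothesis $f^{-1}(W)$ is irreducible, so $s = 1$; call the single component $Z$ and the multiplicity $d \geq 1$. Thus $f^{*}[W] = d\cdot [Z]$. The remaining step is to identify $Z$ with $V$. On the one hand, $V \subseteq f^{-1}(W)$ set-theoretically because $f(V) \subseteq W$, so $V \subseteq Z$ as closed subsets. On the other hand, $V$ and $Z$ are irreducible of the same dimension $\dim W$ (using the dimension computations above, which rely on $f$ being finite), and inside the irreducible space $Z$ the only irreducible closed subset of top dimension is $Z$ itself. Hence $V = Z$, giving $f^{*}[W] = d\cdot [V]$.

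Finally, the multiplicity $d$ is strictly positive because it is the length of a nonzero (flat and hence faithfully flat along the fibre over the generic point of $W$) local ring. There is nothing hard here; the only subtlety worth checking is that the two equalities of dimensions both use finiteness (flatness is not needed for the dimension statements), and that "image" is unambiguous for the finite morphism $f|_V$, so there is no risk of confusing set-theoretic and scheme-theoretic image.
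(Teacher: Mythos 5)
Your proof is correct and takes essentially the same route as the paper's: a dimension count (using finiteness) shows $V$ is a top-dimensional irreducible closed subset of the irreducible set $f^{-1}(W)$, hence equals it, so the flat pullback cycle is a positive multiple of $[V]$. The only difference is that you spell out the multiplicity as the length of the local ring at the generic point, which the paper leaves implicit.
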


\begin{proof}
Note that $W$ is irreducible since $V$ is. Since specialisations lift along finite morphisms, we have $\dim(V) = \dim(W) = \dim(f^{-1}(W))$. Hence, $V$ is a component of $f^{-1}(W)$. Since $f^{-1}(W)$ is irreducible, we conclude that $V = f^{-1}(W)$ holds set-theoretically. Therefore, $f^*[W]$ is a multiple of $[V]$.
\end{proof}

\begin{Thm}\label{Thm main}
Let $X$ be as in \boldref{Con main}. If $k \subseteq k'$ is a field extension and $Y$ is a smooth proper $k'$-variety with a dominant rational map $Y \dashrightarrow X \times_k k'$, then $Y$ cannot be lifted to characteristic $0$.
\end{Thm}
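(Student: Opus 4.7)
The strategy is contradiction: suppose some lift $\mathcal Y \to \Spec R$ of $Y$ exists, where $R$ is a DVR of characteristic $0$ (possibly after enlarging the residue field). Composing the given dominant rational map $Y \dashrightarrow X \times_k k'$ with the projections $X \times_k k' \to C_i \times_k k'$ produces rational maps $Y \dashrightarrow C_i \times_k k'$ which extend to morphisms $\psi_i$ because $g(C_i) \geq 2$. Take the Stein factorisation $\psi_i = f_i \circ \phi_i$, so that $\phi_i \colon Y \to C_i'$ satisfies $\phi_{i,*}\mathcal O_Y = \mathcal O_{C_i'}$ and $g(C_i') \geq 2$ by Riemann--Hurwitz. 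Now apply \boldref{Thm lift morphism to curve}: after replacing $R$ by a generically finite extension $R'$, each $\phi_i$ lifts to a morphism $\widetilde\phi_i \colon \mathcal Y_{R'} \to \mathcal C_i''$ together with a purely inseparable map $F_i \colon C_i' \to \mathcal C_{i,0}''$ such that $\widetilde\phi_{i,0} = F_i \circ \phi_i$.

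Combine the $\widetilde\phi_i$ into $\widetilde\phi \colon \mathcal Y_{R'} \to \prod_i \mathcal C_i''$ and let $\widetilde{\mathcal X}$ be its scheme-theoretic image. Because $Y \dashrightarrow X$ is dominant and $\dim X = r-1$, flatness of $\widetilde{\mathcal X}$ over $R'$ together with upper semicontinuity of fibre dimension forces $\widetilde{\mathcal X}$ to be a relative Cartier divisor of relative dimension $r-1$, whose special fibre is a positive integer multiple of the reduced irreducible image $X'' \subseteq \prod_i \mathcal C_{i,0}''$ of $Y$ under $\widetilde\phi_0$. To relate $\mathcal O(X'')$ back to $\mathscr L$, introduce the intermediate image $X'$ of $Y$ in $\prod_i C_i'$. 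Stable irreducibility of $X$ makes $(\prod f_i)^{-1}(X)$ irreducible, and $\prod F_i$ is a universal homeomorphism, so $(\prod F_i)^{-1}(X'')$ is irreducible. Since $X'$ sits inside each with the correct dimension, it equals both set-theoretically, and \boldref{Lem irreducible inverse image} produces integers $d,e > 0$ with $(\prod f_i)^*[X] = d[X']$ and $(\prod F_i)^*[X''] = e[X']$.

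It remains to exploit the obstruction $E_1$ of \boldref{Def components of line bundle}. Since $E_1$ is insensitive to tensor powers and $\mathcal O(X) = \mathscr L^{\otimes n}$, \boldref{Lem pullback and composition} combined with \boldref{Lem generate End} yields
\[
f_1^*\End^\circ(J_1)f_{1,*} \;=\; E_1\!\left(\mathcal O(X')\right) \;=\; F_1^* E_1\!\left(\mathcal O(X'')\right) F_{1,*}.
\]
Because $F_1$ is purely inseparable we have $g(C_1') = g(\mathcal C_{1,0}'')$, so the converse in \boldref{Lem pullback and composition} forces $\mathcal O(X'')$, and therefore also $\mathcal O(\widetilde{\mathcal X}_0)$, to correspond to the isogeny factor $J_1$ of $\Jac(\mathcal C_{1,0}'')$. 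Applying \boldref{Lem specialisation and composition} to the lift $\mathcal O(\widetilde{\mathcal X})$ then produces an abelian scheme $\mathcal A \to \Spec R'$ whose special fibre is isogenous to $J_1$ and whose endomorphism algebra specialises isomorphically onto $\End^\circ(J_1)$; since $J_1$ is supersingular this is ruled out by the dimension count recalled in the proof of \boldref{Prop no multiple lifts}. The main hazard throughout is juggling the three distinct targets $\prod_i C_i$, $\prod_i C_i'$, and $\prod_i \mathcal C_{i,0}''$ together with the Stein factorisations and Frobenius twists relating them, and the whole purpose of formulating the obstruction $E_1$ to be stable under pullback by finite morphisms is precisely to let it survive all of these intermediate changes.
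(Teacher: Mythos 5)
Your proposal is correct and follows essentially the same route as the paper: extend the rational maps to morphisms, Stein factorise, lift each factor via \boldref{Thm lift morphism to curve}, take the flat scheme-theoretic image in $\prod_i \mathcal C''_i$, use stable irreducibility and \boldref{Lem irreducible inverse image} to compare the three divisor classes, and transport the obstruction $E_1$ through \boldref{Lem pullback and composition} (including its converse, valid since Frobenius twists preserve genus) to contradict \boldref{Prop no multiple lifts}. The only cosmetic differences are that you unfold \boldref{Prop no multiple lifts} via \boldref{Lem specialisation and composition} instead of citing it, and you gloss the initial reduction replacing $k'$ by $\overline{k'}$ (needed so the residue field is algebraically closed for \boldref{Thm lift morphism to curve} and so that generation of endomorphisms and stable irreducibility persist), which the paper carries out explicitly.
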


\begin{Rmk}\label{Rmk surface}
Since $X$ is a divisor in a product of $r \geq 3$ curves, we get examples in every dimension $\geq 2$. Of course, if $X$ is an example of dimension $d$ and $Z$ is any $m$-dimensional smooth projective variety, then $X \times Z$ is an example of dimension $d+m$. 

Since curves are unobstructed, the result in dimension $2$ is the best possible. If $p \geq 5$, then the Bombieri--Mumford classification \cite{BM1, BM2, BM3} together with existing liftability results in the literature \cite{DelLift}, \cite{MumLift}, \cite{NO}, \cite{Sei} imply that every smooth projective surface $X$ of Kodaira dimension $\kappa(X) \leq 1$ can be dominated by a liftable surface \cite[Thm.~6.3.1]{Thesis}. Therefore, our surface of general type is the `easiest' example possible.
\end{Rmk}

\begin{proof}[Proof of Theorem.]
We may replace $k'$ by $\overline{k'}$ and then replace $k$ by $k'$. This does not change the supersingularity of the $C_i$, the generation of all endomorphisms of $J_1$, or the stable irreducibility of $X$. This reduces us to the case $k = k'$.

Any rational map $Y \dashrightarrow C$ to a curve of genus $\geq 1$ can be extended to a morphism $Y \to C_i$ (see e.g.~\cite[Cor.~4.1.4]{Thesis}). In particular, the rational maps $\phi_i \colon Y \dashrightarrow X \to C_i$ extend, hence so does the map
\[
\phi \colon Y \to X \subseteq \prod_{i=1}^r C_i.
\]
Now assume $R$ is a DVR with residue field $k$ and $\mathcal Y$ is a lift of $Y$ over $\Spec R$. Consider the Stein factorisation
\begin{equation}
Y \stackrel{\phi'_i}\rA C'_i \stackrel{f_i}\rA C_i\label{Dia Stein factorisation}
\end{equation}
of the maps $\phi_i \colon Y \to C_i$. 
By \boldref{Thm lift morphism to curve}, after possibly extending $R$ there exist smooth proper curves $\mathcal C''_i$ over $R$, morphisms $\phi'' \colon \mathcal Y \to \mathcal C''_i$, and commutative diagrams
\begin{equation}
\begin{tikzcd}[column sep=.6em]
 & \mathcal Y_0 \ar{ld}[swap]{\phi'_i}\ar{rd}{\phi''_{i,0}} & \\
C'_i \ar{rr}[swap]{F_i} & & \mathcal C''_{i,0}\punct{,}
\end{tikzcd}\label{Dia triangle lift}
\end{equation}
where $F_i$ is purely inseparable (hence a power of Frobenius). Write $Z = \prod_i C_i$, $Z' = \prod_i C'_i$, and $\mathcal Z'' = \prod_i \mathcal C''_i$. The product over all $i$ of (\ref{Dia Stein factorisation}) and (\ref{Dia triangle lift}) gives the commutative diagram
\begin{equation*}
\begin{tikzcd}[row sep=.25cm,column sep=1.9em]
 & Y \ar{dd}[yshift=-.3em]{\phi'}\ar{lddd}[swap]{\phi}\ar{rddd}{\phi''_0} & \\
 & \  & \\
 & Z' \ar{ld}{f} \ar{rd}[swap]{F} & \\
Z & & \mathcal Z''_0\punct{.}
\end{tikzcd}
\end{equation*}
Then the image $X' = \phi'(Y) \subseteq Z'$ satisfies $f(X') = X \subseteq Z$. The preimage $f^{-1}(X) \subseteq Z'$ is irreducible since $X$ is stably irreducible by \boldref{Con main}. Thus, \boldref{Lem irreducible inverse image} implies that
\begin{equation}
f^*[X] = a \cdot [X']\label{Eq a}
\end{equation}
for some $a \in \Z_{>0}$. 
Since $F \colon Z' \to \mathcal Z''_0$ is radicial, another application of \boldref{Lem irreducible inverse image} shows that the image $X'' = \phi''_0(Y) \subseteq \mathcal Z''_0$ satisfies
\begin{equation}
F^*[X''] = b \cdot [X']\label{Eq b}
\end{equation}
for some $b \in \Z_{>0}$. 
Finally, let $\mathcal X'' = \phi''(\mathcal Y) \subseteq \mathcal Z''$ be the scheme-theoretic image of $\phi'' \colon \mathcal Y \to \mathcal Z''$. Then $\mathcal X''$ is flat over $R$ since the image factorisation $\mathcal O_{\mathcal Z''} \twoheadrightarrow \mathcal O_{\mathcal X''} \hookrightarrow \phi''_* \mathcal O_{\mathcal Y}$ realises $\mathcal O_{\mathcal X''}$ as a subsheaf of the $R$-torsion-free sheaf $\phi''_* \mathcal O_{\mathcal Y}$. Hence, $\mathcal X''$ is a lift of its special fibre $\mathcal X''_0$ as a divisor. Since $\mathcal X''_0$ agrees set-theoretically with the reduced divisor $X'' = \phi''_0(Y)$, we conclude that
\begin{equation}
[\mathcal X''_0] = c \cdot [X'']\label{Eq c}
\end{equation}
for some $c \in \Z_{>0}$. 
Combining (\ref{Eq a}), (\ref{Eq b}), and (\ref{Eq c}), we conclude that
\begin{equation}
bc \cdot f^*[X] = a \cdot F^* [\mathcal X''_0].\label{Eq d}
\end{equation}
But $[X] = [\mathscr L]$ is given by a line bundle $\mathscr L$ that generates all endomorphisms of the supersingular abelian variety $J_1$, by \boldref{Con main}. Hence, $f^*[\mathscr L]$ corresponds to the supersingular isogeny factor $J_1$ of $J'_1$ by \boldref{Lem pullback and composition}. Hence the same holds for $[\mathcal X''_0]$ by (\ref{Eq d}) and \boldref{Lem pullback and composition}. Finally, \boldref{Prop no multiple lifts} then shows that $\mathcal O_{\mathcal Z''_0}(\mathcal X''_0)$ does not lift to a line bundle on $\mathcal Z''$. This contradicts the fact that $\mathcal X''$ is a lift of $\mathcal X''_0$ as a divisor.
\end{proof}

\begin{Rmk}\label{Rmk Zariski open}
The proofs of \boldref{Thm Rosati dual elements} and \boldref{Lem generate End} show that the set of $\mathscr L \in \Pic(\prod_i C_i)$ that generate all endomorphisms of $J_1$ (as in \boldref{Def corresponds to isogeny factor}) form the integral points of the intersection of the ample cone with a Zariski open subset of $\NS(\prod_i C_i)\otimes \Q$. Similarly, the set of stably irreducible divisors $X \in |\mathscr L^{\otimes n}|$ contains a Zariski open (which is nonempty for $n \gg 0$) by the proof of \boldref{Prop stably irreducible}. This shows that `most' divisors in $\prod_i C_i$ give counterexamples to the main question.
\end{Rmk}

\begin{Rmk}\label{Rmk formal}
Our methods do not address the weaker question of dominating $X$ by a smooth proper variety $Y$ that admits a \emph{formal} lift to characteristic $0$. Similarly, our methods do not answer Bhatt's question \cite[Rmk.~5.5.5]{BhattThesis} whether every smooth projective variety $X$ can be dominated by a smooth proper variety $Y$ that admits a lift to the length $2$ Witt vectors $W_2(k)$.
\end{Rmk}

\phantomsection
\printbibliography
\end{document}